\newcounter{ipotesi}
 \makeatletter \@addtoreset{equation}{section}
\newtheorem{thm}{Theorem}[section]
\newtheorem{hyp}[thm]{Hypotheses}{\rm}
{\rm}
\newtheorem{lemm}[thm]{Lemma}
\newtheorem{coro}[thm]{Corollary}
\newtheorem{prop}[thm]{Proposition}
\newtheorem{rmk}[thm]{Remark}{\rm}
\newtheorem{example}[thm]{Example}
\newcounter{parentenv}
\newcommand{\R}{{\mathbb R}}
\newcommand{\N}{{\mathbb N}}
\newcommand{\Rd}{\mathbb R^d}
\newcommand{\Rn}{\mathbb R^n}
\newcommand{\Rm}{\mathbb R^m}
\newcommand{\T}{{\bm T}}
\newcommand{\e}{{\bm e}}
\newcommand{\g}{{\bm g}}
\newcommand{\f}{{\bm f}}
\newcommand{\bb}{{\bm b}}
\newcommand{\uu}{{\bm u}}
\newcommand{\A}{\bm{\mathcal A}}
\newcommand{\1}{\mathds 1}
\newcommand{\vv}{{\bm v}}
\newcommand{\zz}{{\bm z}}
\newcommand{\one}{\mbox{$1\!\!\!\;\mathrm{l}$}}
\newcommand{\ra}{\rightarrow}
\newcommand{\G}{{\bm G}}
\newcommand{\ol}[1]{\overline{#1}}
\renewcommand{\hat}[1]{\widehat{#1}}
\renewcommand{\tilde}[1]{\widetilde{#1}}
\newcommand{\set}[1]{{\left\{#1\right\}}}
\newcommand{\pa}[1]{{\left(#1\right)}}
\newcommand{\abs}[1]{{\left|#1\right|}}
\newcommand{\norm}[1]{{\left\|#1\right\|}}
\newcommand{\eqsys}[1]{{\left\{\begin{array}{ll}#1\end{array}\right.}}
\newcommand{\tc}{\, \middle |\,}
\begin{document}

\title[On coupled systems of PDEs with unbounded coefficients]{On coupled systems of PDEs\\ with unbounded coefficients}
\thanks{The authors are members of G.N.A.M.P.A. of the Italian Istituto Nazionale di Alta Matematica (INdAM).
Work partially supported by the INdAM-GNAMPA Project 2017 ``Equazioni e sistemi di
equazioni di Kolmogorov in dimensione finita e non''.}
\author[L. Angiuli and L. Lorenzi]{Luciana Angiuli, Luca Lorenzi}
\address{L.A.:  Dipartimento di Matematica e Fisica ``Ennio De Giorgi'', Universit\`a del Salento, Via per Arnesano, I-73100 LECCE (Italy)}
\address{L.L.: Dipartimento di Matematica e Informatica, Universit\`a degli Studi di Parma, Parco Area delle Scienze 53/A, I-43124 PARMA (Italy)}
\email{luciana.angiuli@unisalento.it}
\email{luca.lorenzi@unipr.it}

\date{}

\keywords{Nonautonomous parabolic systems, unbounded coefficients,
evolution operators, compactness, invariant subspaces, evolution systems of invariant measures}
\subjclass[2000]{35K40, 35K45, 37L40, 46B50, 47A15}

\begin{abstract}
We study the Cauchy problem associated to parabolic systems of the form $D_t\uu=\A(t)\uu$ in $C_b(\Rd;\Rm)$, the space of continuous and bounded functions $\f:\Rd\to\Rm$. Here $\A(t)$ is a weakly coupled elliptic operator acting on vector-valued functions, having diffusion and drift coefficients which change from equation to equation. We prove existence and uniqueness of the evolution operator $\G(t,s)$ which governs the problem in $C_b(\Rd;\Rm)$ proving its positivity. The compactness of $\G(t,s)$ in $C_b(\Rd;\Rm)$ and some of its consequences are also studied. Finally, we extend the evolution operator $\G(t,s)$ to the $L^p$- spaces related to the so called "evolution system of measures" and we provide conditions for the compactness of $\G(t,s)$ in this setting.
\end{abstract}

\maketitle

\section{Introduction}
In the study of the diffusion processes, second-order elliptic operators with unbounded coefficients appear naturally and the associated parabolic equation represents the Kolmogorov equation of the process. The theory of such equations is now well developed in the scalar case as the systematic treatise of \cite{newbook} and the reference therein show. On the contrary, the literature on systems of parabolic equations with unbounded coefficients is at a first stage and only some partial results are available.
The interest in the study of systems is on one hand motivated by the natural sake of extending the known results of the scalar case. On the other hand, systems of parabolic equations with unbounded coefficients arise in many applications. Among them we quote the study of backward-forward stochastic differential systems, the study of Nash equilibria to stochastic differential games, the analysis of the weighted $\overline{\partial}$- problem in $\mathbb{C}^d$, in the time-dependent Born-Openheimer theory and also in the study of Navier-Stokes equations. We refer the reader to \cite[Section 6]{AALT} and \cite{BeGoTe, dallara,HanRha,haslinger,HRS, HRS1}.

One of the first papers concerning parabolic systems with unbounded coefficients is \cite{HLPRS} where the authors prove that the realization $\A_p$ of the weakly coupled elliptic operator $\mathcal{\A}\uu={\rm div}(Q\nabla \uu)+F\cdot \nabla \uu+C\uu$ in $L^p(\Rm;\Rm)$ generates a strongly continuous semigroup and they characterize its domain under suitable assumptions on its coefficients. More precisely, they assume that the diffusion coefficients $Q=(q_{ij})$ are uniformly elliptic and bounded together with their first-order derivatives, the drift coefficient $F$ and the potential $V$ are sufficiently smooth and allow to grow as $|x|\log|x|$ and $\log|x|$, respectively, as $|x|\to +\infty$.

Next, first in \cite{DelLor11OnA} (in the weakly coupled case) and then in \cite{AALT} (also in the nonautonomous case), systems of parabolic equations with unbounded coefficients coupled up to the first order have been studied in the space of bounded and continuous functions over $\Rd$, and existence and uniqueness results for a classical solution to the associated Cauchy problem are established. This allows to introduce a vector-valued semigroup $\T(t)$ (an evolution operator $\G(t,s)$ in the nonautonomous case) in $\mathcal{L}(C_b(\Rd;\Rm))$ with the operator $\A(t)$.

Taking advantage of the results in \cite{AALT}, the authors of \cite{AngLorPal} provide sufficient conditions for the semigroup $\T(t)$ to admit a bounded extension to $L^p(\Rd;\Rm)$. Also some summability improving properties of the semigroup are studied. More precisely, hypercontractivity estimates of the form $\|\T(t)\|_{\mathcal{L}(L^p(\Rd;\Rm), L^q(\Rd;\Rm))}\le c_{p,q}(t)$ for any $1\le p\le+\infty$ and some positive function $c_{p,q}:(0,+\infty)\to (0,+\infty)$ are established. We stress that also the nonautonomous case is considered in \cite{AngLorPal}.

All the above papers have a common feature: the elliptic operators therein considered have all the diffusion coefficients that do not change from equation to equation, i.e.,
\begin{eqnarray*}
(\A_0\uu)_k= {\rm Tr}(QD^2u_k)+\sum_{i=1}^d (B_i D_i \uu)_k+ (C\uu)_k, \qquad\;\,k=1, \ldots, m.
\end{eqnarray*}
This form of the equations allows to extend easily the classical maximum principle for systems with bounded coefficients, which in turn allows to prove the  uniqueness of the classical solution of the Cauchy problem associated with the operator $\A_0$ and provides a comparison between the vector-valued semigroup $\T(t)$ associated to $\A_0$ and the scalar semigroup $T(t)$ associated to the operator $\mathcal{A}={\rm Tr}(QD^2)+\langle b,\nabla\rangle$
for a suitable drift term $b$, i.e., it can be shown that there exists $K\in \R$ such that
\begin{eqnarray*}
|\T(t)\f|^2\le e^{K(t-s)}T(t)|\f|^2,\qquad\;\, \f\in C_b(\Rd;\Rm),\qquad\; \,t>0.
\end{eqnarray*}
This is also the case considered in \cite{AAL_Inv1} where the matrices $B_i$ split in two terms: the leading one which is of diagonal type (like in the weakly coupled case) and the other one whose growth at infinity is controlled by a power of the minimum eigenvalue of the diffusion matrix.

In this paper, differently from the cases so far considered, we deal with nonautonomous weakly coupled operators with diffusion and drift coefficients which may vary from equation to equation, acting on a smooth function $\bm\psi$ as follows
\begin{eqnarray*}
(\A(t)\boldsymbol{\psi})_k(t,x)= {\rm Tr}(Q^k(t,x)D^2\psi_k(x))+\langle {\bf b}^k(t,x),\nabla \psi_k(x)\rangle+(C(t,x)\psi(x))_k,
\end{eqnarray*}
for any $(t,x)\in I\times \R$ and $k=1,\ldots, m$, $I$ being a right halfline (possibly $I=\R$).
The form of the operator $\A(t)$ makes the associated Cauchy problem
\begin{equation}\label{pb}
\left\{
\begin{array}{ll}
D_t\uu=\A(t)\uu, &{\rm in}~(s,+\infty)\times\R^d,\\[1mm]
\uu(s,\cdot)=\f\in C_b(\Rd;\Rm), &{\rm in}~\Rd,
\end{array}
\right.
\end{equation}
quite involved. In particular, in this case we are not able to control the solution of problem \eqref{pb} in terms of a scalar semigroup. To overcome this difficulty we extend to our situation a maximum principle for systems having bounded coefficients to the case of unbounded coefficients assuming that the off-diagonal entries of the matrix $C$ are bounded from below and the sum of each row of the matrix $C$ is bounded from above.
This yields the uniqueness of the classical solution to problem \eqref{pb}.

Once uniqueness is guaranteed, the existence of a classical solution of the problem (1.2) is then proved by some
compactness and localization argument based on interior Schauder estimates recalled in the Appendix. As a byproduct, we can associate an evolution operator $\G(t,s)$ to $\A(t)$ in $C_b(\Rd;\Rm)$, in the natural way.

The evolution operator $\G(t,s)$ is positive if the off-diagonal entries of $C$ are nonnegative and the system does not contain any subsystem which decouple, then each component of $\G(\cdot,s)\f$ is strictly positive in $(s,+\infty)\times\Rd$ whenever $\f$ is a nonnegative function which has at least a component that does not identically vanish in $\Rd$.

In \cite{AALT} the authors study the compactness of the evolution operator $\G_0(t,s)$ ($t>s\in I$) in $\mathcal{L}(C_b(\Rd;\Rm))$ showing that it is equivalent to the tightness of the measures $\{|p_{ij}(t,s,x,\cdot)|:\, x\in \Rd\}$ for any $i,j=1,\ldots,m$, where $p_{ij}(t,s,x,\cdot)$ are the transition kernels associated to the problem, i.e., for any $\f\in C_b(\Rd;\Rm)$, $s\in I$ and $k=1,\ldots, m$
\begin{eqnarray*}
(\G(t,s)\f)_k(x)=\sum_{i=1}^m \int_{\Rd}f_i(y)p_{ki}(t,s,x,dy),\qquad\;\,(t,x)\in (s,+\infty)\times \Rd.
\end{eqnarray*}
This fact together with the pointwise estimate of $|\G(t,s)\f|^2$ in terms of the scalar evolution operator associated to the operator $\A$, guarantees that the compactness of the scalar evolution operator is a sufficient condition for the compactness of $\G(t,s)$, hence the problem reduces to find conditions that ensure compactness in the scalar case.
We prove that, also in our case, the compactness of $\G(t,s)$ is equivalent to the tightness of the transition kernels associated to the problem (which are nonnegative measures if the off-diagonal entries of $C$ are nonnegative). On the other hand, the lack of a scalar evolution operator which ``dominates'' $\G(t,s)$ prevents us from applying the results of the scalar case.
However, it is possible to provide sufficient conditions for the compactness of $\G(t,s)$ in $C_b(\Rd;\Rm)$ in terms of the existence of some Lyapunov functions, see Theorem \ref{thm_comp_2}. In this case $\G(t,s)$ preserves neither $C_0(\Rd;\Rm)$ nor $L^p(\Rd;\Rm)$ for $p\in [1,+\infty)$.
Further, assumptions on the coefficients of $\A$ are provided which guarantee that these spaces together with the space $C^1_b(\Rd;\Rm)$ are preserved by the action of $\G(t,s)$.

Finally, we prove the existence of an evolution system of measures  associated with the evolution operator $\G(t,s)$ consisting of positive measures (which are equivalent to the Lebesgue one), where, according to the definition introduced in \cite{AAL_Inv,AAL_Inv1}, a family $\{\mu_{i,t}:\,t\in I,\,i=1,\ldots,m\}$ is an evolution system of measures if
\begin{eqnarray*}
\sum_{j=1}^m\int_{\Rd}(\G(t,s)\f)_jd\mu_{j,t}=\sum_{j=1}^m\int_{\Rd}f_jd\mu_{i,s},\qquad\;\,I\ni s<t,
\end{eqnarray*}
for any $\f=(f_1,\ldots,f_m)\in C_b(\Rd;\Rm)$,
where $(\G(t,s)\f)_j$ denotes the $j$-th component of the vector-valued function $\G(t,s)$.
We prove that the evolution operator $\G(t,s)$ can be extended with a bounded operator mapping $L^p_{{\bm\mu}_s}(\Rd;\Rm)$ into $L^p_{{\bm\mu}_t}(\Rd;\Rm)$ for any $p \in [1, +\infty)$ and provide sufficient conditions to be compact from $L^p_{{\bm\mu}_s}(\Rd;\Rm)$ into $L^p_{{\bm\mu}_t}(\Rd;\Rm)$ for any $p \in (1, +\infty)$.

\subsection*{Notation.}
Vector-valued functions are displayed in bold style. Given a function $\f$ (resp. a sequence
$(\f_n)$) as above, we denote by $f_i$ (resp. $f_{n,i}$) its $i$-th component (resp. the $i$-th component
of the function $\f_n$).
By $B_b(\Rd;\Rm)$ we denote the set of all the bounded Borel measurable functions $\f:\Rd\to\Rm$, where $\|\f\|_{\infty}^2=\sum_{k=1}^m\sup_{x\in\Rd}|f_k(x)|^2$.
For any $k\ge 0$, $C^k_b(\R^d;\R^m)$ is the space of all $\f:\Rd\to\Rm$
whose components belong to $C^k_b(\R^d)$, where the notation $C^k(\R^d)$ ($k\ge 0$) is standard and we use the subscripts ``$c$'', ``$0$'' and ``$b$'', respectively, for spaces of functions with compact support, vanishing at infinity and bounded.
Similarly, when $k\in (0,1)$, we use the subscript ``loc'' to denote the space of all $f\in C(\Rd)$
which are H\"older continuous in any compact set of $\Rd$.
We assume that the reader is familiar also with the parabolic spaces $C^{\alpha/2,\alpha}(I\times \Rd)$
($\alpha\in (0,1)$) and $C^{1,2}(I\times \Rd)$, and we use the subscript ``loc'' with the same meaning as above.

The symbols $D_tf$, $D_i f$ and $D_{ij}f$, respectively, denote the time derivative, the first-order spatial derivative with respect to the $i$-th variable and
 the second-order spatial derivative with respect to the $i$-th and $j$-th variables.
We write $J_x\uu$ for the Jacobian matrix of $\uu$ with respect to the spatial variables, omitting the subscript $x$ when no confusion may arise.
By $\boldsymbol{e}_j$ we denote the $j$-th vector of the Euclidean basis of $\R^m$. $\bm{\one}$ (resp. $\bm{0}$) denotes the $m$-valued function with entries all equal to $\one$ (resp. $0$) where $\one$ is the function which is identically equal to $1$ in $\Rd$.
For any function $\f:\Rd\to\Rm$, we set $\f^+=\f\vee \bm{0}$ and $\f^-=\f\wedge \bm{0}$.
Throughout the paper we denote by $c$ a positive constant, which may vary from line to line and, if not otherwise specified, may depend at most on $d$, $m$.
We write $c_{\delta}$ when we want to stress that the constant depends on $\delta$.
For any interval $I \subset \R$, we set $\Lambda_I:=\{(t,s)\in I\times I: t> s\}$. Finally, we point out that all the inequalities which involve vector-valued functions are intended componentwise.

\section{Preliminary results}
Let $I$ be either an open right-interval or $I=\R$ and $(\A(t))_{t\in I}$ be a family of second order uniformly elliptic operators defined on smooth vector-valued functions $\boldsymbol{\psi}:\R^d\to\R^m$ by
\begin{align}\label{op_A}
(\A(t)\boldsymbol{\psi})_k(t,x)=
& {\rm Tr}(Q^k(t,x)D^2\psi_k(x))+\langle {\bf b}^k(t,x),\nabla \psi_k(x)\rangle+(C(t,x)\psi(x))_k,\notag\\
=&(\mathcal{A}_k(t)\psi_k)(t,x)+(C(t,x)\psi(x))_k
\end{align}
for any $t\in I$ and $k=1,\ldots, m$.
Fixed $s\in I$, we study the Cauchy problem
\begin{equation}
\left\{
\begin{array}{ll}
D_t\uu=\A(t)\uu, &{\rm in}~(s,+\infty)\times\R^d,\\[1mm]
\uu(s,\cdot)=\f, &{\rm in}~\Rd.
\end{array}
\right.
\label{pb_Cauchy}
\end{equation}
for initial data which are vector-valued bounded and continuous functions $\f:\Rd\to\Rm$. The standing hypotheses considered in the whole paper are the following.

\begin{hyp}\
\label{hyp-base}
\begin{enumerate}[\rm (i)]
\item
The coefficients $q_{ij}^{k}=q_{ji}^{k}$, $b_j^{k}$ and the entries $c_{hk}$ of the not identically vanishing matrix-valued function $C$ belong to $C^{\alpha/2,\alpha}_{\rm loc}(I\times\Rd)$ for some $\alpha\in (0,1)$ and each $i,j=1,\ldots,d$ and $h,k=1, \ldots, m$;
\item
the infimum $\mu_k^0$ over $I\times\Rd$ of the minimum eigenvalue $\mu_k(t,x)$ of the matrix $Q^k(t,x)=(q_{ij}^k(t,x))$ is positive for any $k=1, \ldots, m$;
\item
there does not exist a nontrivial set $K\subset \{1, \ldots, m\}$ such that the coefficients $c_{ij}$ identically vanish on $I\times\Rd$ for any $i\in K$ and $j\notin K$;
\item
for any $J\subset I$ bounded, there exists a positive function $\bm{\varphi}_J\in C^2(\Rd;\Rm)$, blowing up componentwise as $|x|$ tends to $+\infty$
such that $(\A(t)\bm\varphi_J)(x)\le \lambda_J\bm\varphi_J(x)$ for any $t\in J$, $x\in\Rd$ and some positive constant $\lambda_J$;
\item
the off-diagonal entries of the matrix-valued function $C$ are bounded from below on $\Rd$ and the sum of the elements on each row of $C$ is a bounded from above function on $\Rd$.
\end{enumerate}
\end{hyp}

\begin{rmk}
{\rm Some comments on the set of our assumptions are in order.\\
 Hypotheses \ref{hyp-base}(i) and (ii) are a standard regularity assumption on the coefficients of the operator \eqref{op_A} and a standard uniform ellipticity hypothesis on the diffusion matrices $Q^k$, $k=1,\ldots, m$.\\
We consider weakly-coupled systems of parabolic equations and Hypothesis \ref{hyp-base}(iii) is a condition on the entries of the matrix-valued function $C$ which guarantees that the differential system in \eqref{pb_Cauchy} does not contain subsystems with less than $m$ unknowns.\\
Hypothesis \ref{hyp-base}(iv) is the vector-valued version of the scalar one which requires the existence of a Lyapunov function for the elliptic operator associated to the problem. This is typical request when dealing with parabolic problems with unbounded coefficients since it allows to prove a variant of the classical maximum principle.\\
Also Hypothesis \ref{hyp-base}(v) is finalized to prove a maximum principle when, as in our case, the diffusion coefficients and the drift terms can change from line to line. We point out that the assumptions considered here do not imply that the quadratic form associated to the matrix-valued function $C$ is bounded from above in $\Rd$. Indeed, if
\begin{align*}
C(x)=(|x|+1)\begin{pmatrix}
-4 & 1 & 2 & 1 \\
1 & -3 & 1 & 0 \\
0 & 1 & -1 & 0 \\
0 & 2 & 0 & -2
\end{pmatrix},\qquad\;\, x \in \Rd,
\end{align*}
then condition (v) in Hypothesis \ref{hyp-base} is satisfied. However, the matrix $C(0)+C(0)^*$ has a positive eigenvalue $\gamma$. Thus, if $\xi$ denotes a unit eigenvector associated to $\gamma$, then $\langle C(x)\xi,\xi\rangle=\gamma(|x|+1)$ for any $x\in\Rd$.

On the other hand we can find out matrices whose associated quadratic form is non positive definite on $\Rd$ which do not satisfy Hypothesis \ref{hyp-base}(v). Consider for instance the matrix-valued function $C$ defined by
\begin{align*}
C(x)=(|x|+1)\begin{pmatrix}
-4 & 0 & 2 & 1 \\
0 & -3 & 1 & 0 \\
0 & 1 & -1 & 0 \\
1 & 2 & 0 & -2
\end{pmatrix},\qquad\;\, x \in \Rd,
\end{align*}
and notice that the sum of the terms on the last row is positive.\\
We point out that if $C$ is symmetric, the off-diagonal entries of the matrix-valued function $C$ are nonnegative
and the sum of each row of $C$ is nonpositive then the quadratic form associated to the matrix-valued function $C$
is nonpositive. This is an immediate consequence of the Gershgorin's theorem related to the localization of the spectrum of $C$.}
\end{rmk}

In order to deduce uniqueness of a classical solution to problem \eqref{pb_Cauchy} we prove a variant of the classical
maximum principle which holds under more restrictive assumptions on the entries of the matrix- valued function $C$ and
whose proof is deeply based on the existence of the Lyapunov function in Hypothesis \ref{hyp-base}(iv).

\begin{thm}\label{maxprinc}
Let us assume that Hypotheses $\ref{hyp-base}(i)$-$(iv)$ hold true. Further suppose that the off-diagonal entries of the
matrix-valued function $C$ are nonnegative and the sum of each row of $C$ is nonpositive. Then, for any $T>s\in I$, if
$\uu\in C_b([s,T]\times\R^d;\R^m)\cap C^{1,2}((s,T]\times \R^d;\R^m)$ satisfies
\begin{align*}
\left\{
\begin{array}{ll}
D_t\uu-\A(t)\uu\leq 0, & {\rm in}\,\, (s,T]\times\R^d,\\[1mm]
\uu(s,\cdot )\leq 0, & {\rm in}\,\,\R^d,
\end{array}
\right.
\end{align*}
then $\uu\leq 0$ in $[s,T]\times\Rd$.
\end{thm}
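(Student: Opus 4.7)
The plan is to adapt the classical parabolic maximum principle to the unbounded-coefficient setting by using the Lyapunov function supplied by Hypothesis \ref{hyp-base}(iv) as a penalisation term that controls $\uu$ near infinity, combined with a pointwise test at an interior maximum that exploits the sign conditions on $C$.

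Fix $J=[s,T]$, let $\bm\varphi=\bm\varphi_J$ and $\lambda=\lambda_J$ be as in Hypothesis \ref{hyp-base}(iv), and pick any $\mu>\lambda$. The first step is to introduce the auxiliary function
\[
\vv_\eps(t,x):=\uu(t,x)-\eps e^{\mu(t-s)}\bm\varphi(x),\qquad (t,x)\in[s,T]\times\Rd,\ \eps>0,
\]
and to verify componentwise, using the differential inequality on $\uu$ and $(\A(t)\bm\varphi)_k\le\lambda\varphi_k$, that
\[
(D_t\vv_\eps-\A(t)\vv_\eps)_k\;\le\;\eps e^{\mu(t-s)}(\lambda-\mu)\varphi_k\;<\;0 \qquad\text{on }(s,T]\times\Rd.
\]
The target is to show $\vv_\eps\le\bm 0$ on $[s,T]\times\Rd$ for every $\eps>0$ and then let $\eps\to 0^+$ to recover $\uu\le\bm 0$.

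Since $\uu$ is bounded while each $\varphi_k$ blows up as $|x|\to +\infty$, every component $v_{\eps,k}$ tends to $-\infty$ as $|x|\to +\infty$, uniformly in $t\in[s,T]$. Therefore
\[
M:=\max_{1\le k\le m}\ \sup_{[s,T]\times\Rd} v_{\eps,k}
\]
is attained at some $(t_0,x_0,k_0)$. If $t_0=s$, the initial datum immediately gives $M\le 0$. Otherwise at $(t_0,x_0)$ one has $\nabla v_{\eps,k_0}=0$, $D^2 v_{\eps,k_0}\le 0$ and $D_t v_{\eps,k_0}\ge 0$; combined with uniform ellipticity (Hypothesis \ref{hyp-base}(ii)), which yields ${\rm Tr}(Q^{k_0}D^2 v_{\eps,k_0})\le 0$, this gives
\[
(D_t\vv_\eps-\A(t)\vv_\eps)_{k_0}(t_0,x_0)\;\ge\;-\,(C(t_0,x_0)\vv_\eps(t_0,x_0))_{k_0}.
\]

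Assuming towards a contradiction that $M>0$, the decisive step is to bound the coupling term. Writing $v_{\eps,j}(t_0,x_0)=M+(v_{\eps,j}(t_0,x_0)-M)$ with $v_{\eps,j}(t_0,x_0)-M\le 0$ for every $j$ (and equal to $0$ for $j=k_0$), and invoking the nonnegativity of the off-diagonal entries $c_{k_0 j}$ together with $\sum_j c_{k_0 j}(t_0,x_0)\le 0$, one obtains
\[
(C(t_0,x_0)\vv_\eps(t_0,x_0))_{k_0}\;\le\; M\sum_{j=1}^m c_{k_0 j}(t_0,x_0)\;\le\;0,
\]
whence $(D_t\vv_\eps-\A(t)\vv_\eps)_{k_0}(t_0,x_0)\ge 0$. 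This contradicts the strict negativity obtained in the first step, hence $M\le 0$, and the proof closes by sending $\eps\to 0^+$. The main obstacle is precisely this evaluation of $(C\vv_\eps)_{k_0}$ at the interior maximiser: both the off-diagonal sign condition (which is needed so that the error $v_{\eps,j}-M$ enters with the correct sign) and the row-sum condition (which kills the leading $M\sum_j c_{k_0 j}$ term) are used essentially, and neither can be dropped.
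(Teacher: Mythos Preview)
Your proof is correct and follows the same Lyapunov-penalisation strategy as the paper: perturb $\uu$ by $-\eps e^{\mu(t-s)}\bm\varphi$ to obtain a strict differential inequality, use blow-up of $\bm\varphi$ at infinity to localise the maximum, and derive a contradiction from the sign structure of $C$.

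The only difference lies in how the contradiction is extracted at the critical point. The paper uses a ``first-hitting-time'' argument: it lets $\bar t$ be the supremum of the set $\{t\in[s,T]:\vv_\eps(t,\cdot)<\bm 0\ \text{in }\Rd\}$, so that at the touching point $(\bar t,\bar x)$ the maximising component satisfies $v_{\eps,k_0}(\bar t,\bar x)=0$ \emph{exactly} while all components are still $\le 0$. Consequently $(C\vv_\eps)_{k_0}=\sum_{j\neq k_0}c_{k_0 j}v_{\eps,j}\le 0$ follows from the off-diagonal nonnegativity alone. Your global-maximum argument instead allows the maximal value $M$ to be strictly positive and therefore needs the row-sum condition $\sum_j c_{k_0 j}\le 0$ to dispose of the leading term $M\sum_j c_{k_0 j}$. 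Both routes are valid under the stated hypotheses; the paper's variant is marginally sharper in that the row-sum assumption plays no role in its proof of the maximum principle itself (it is invoked only afterwards, when the principle is applied to $\uu-\max_k\|f_k\|_\infty\bm\one$).
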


\begin{proof}
For each $n\in \N$ we introduce the vector valued function $\vv_n$ defined by
\begin{eqnarray*}
\vv_n(t,x):=\uu(t,x)-\frac{1}{n}e^{\lambda_0(t-s)}\bm\varphi(x),\qquad\;\,(t,x)\in [s,T]\times\Rd,
\end{eqnarray*}
where $\lambda_0$ is a constant larger than $\lambda_{[s,T]}$ and $\bm\varphi=\bm\varphi_{[s,T]}$.
Note that, for any $t \in (s, T]$ and $k=1, \ldots, m$,
\begin{align}\label{ap_0}
D_t v_{n,k}(t,\cdot)-(\A(t)\vv_n)_k(t,\cdot)=&D_tu_{k}(t,\cdot)-(\A(t)\uu)_{k}(t,\cdot)\notag\\
&+\frac{1}{n}e^{\lambda(t-s)}\big((\A(t)\bm\varphi)_{k}-\lambda\varphi_k\big)<0,
\end{align}
due to Hypotheses \ref{hyp-base}(iii),(v).

Let us prove that $\vv_n(t,x)< 0$ for every $(t,x)\in [s,T]\times\R^d$ and $n\in\N$, or equivalently, that
$E_n=\set{t\in[s,T]\tc\vv_{n}(t,x)< 0\text{ for every }x\in\R^d}=[s,T]$. Note that $E_n\neq\varnothing$ since $\vv_n(s,x)< 0$ for any $x\in\Rd$. Moreover, $E_n$ contains a right-neighborhood of $t=s$. Indeed, by continuity, for any $R>0$ there exists $\delta_R>0$ such that $\vv_n<0$ in $[s,s+\delta_R]\times \overline{B_R}$. Since $\vv_n$ tends to $-\infty$, uniformly with respect to $t\in[s,T]$ as $\abs{x}\ra +\infty$, there exists $R_0>0$ such that $\vv_n$ is negative in $[s,T]\times (\Rd\setminus B_{R_0})$. Thus, $E_n$ contains the interval $[s,s+\delta_{R_0}]$. The previous argument also shows that $E_n$ is an interval.

Denote by $\overline t_n$ the supremum of $E_n$ and assume by contradiction that $\bar{t}_n<T$. By continuity $\vv_{n}(\bar{t}_n,\cdot)\le 0$ in $\Rd$, and by definition of $\bar{t}_n$ there exist $k_n\in\set{1,\ldots,m}$ and $\bar{x}_n\in\R^d$ such that $v_{n,k_n}(\bar{t}_n,\bar{x}_n)=0$. Since $\vv_{n}(t,x)\leq 0$ for every $t\leq \bar{t}_n$ and $x\in \Rd$ it follows that $\bar{x}_n$ is a maximum point for $v_{n,k_n}(\bar{t}_n,\cdot)$ and $D_tv_{n,k_n}(t_n,\overline x_n)\ge 0$. Hence,
\begin{align}\label{ap_1}
D_tv_{n,k_n}(\bar{t}_n,\bar{x}_n)-\sum_{i,j=1}^dq_{ij}^{k_n}D_{ij}v_{n,k_n}(\bar{t}_n,\bar{x}_n)-\sum_{i=1}^d b_i^{k_n}D_iv_{n,k_n}(\bar{t}_n,\bar{x}_n)\geq 0,
\end{align}
and, since $c_{k_n,i}\geq 0$ for every $i\neq k_n$ (see Hypothesis \ref{hyp-base}(iii)),
\begin{align}\label{ap_2}
\sum_{i=1}^mc_{{k_n},i}v_{n,i}(\bar{t}_n,\bar{x}_n)=\sum_{\substack{i=1\\ i\neq k_n}}^mc_{{k_n},i}v_{n,i}(\bar{t}_n,\bar{x}_n)\leq 0.
\end{align}
 Estimates \eqref{ap_1} and \eqref{ap_2} contradict \eqref{ap_0}. Thus we get $\vv_n(t,x)< 0$ for any $(t,x)\in [s,T]\times\R^d$ and $n\in \N$. Consequently, letting $n \to +\infty$, we infer that $\uu(t,x)\leq 0$ for every $(t,x)\in[s,T]\times\R^d$.
\end{proof}

\begin{thm}
\label{bbthe}
Under Hypotheses $\ref{hyp-base}$, for any $\f\in C_b(\Rd;\Rm)$ and $s\in I$, the Cauchy problem  \eqref{pb_Cauchy}
admits a unique solution $\uu$ which belongs to $C_b([s,T]\times\Rd;\Rm)\cap C^{1+\alpha/2,2+\alpha}_{\rm loc}((s,+\infty)\times\Rd;\Rm)$ for any $T>s$ and it satisfies the estimate
\begin{equation}\label{moto}
\|\uu(t,\cdot)\|_\infty \le e^{K(t-s)}\|\f\|_\infty,\qquad\;\, t>s,
\end{equation}
for some positive constant $K$ $($explicitely determined in the proof $)$.
\end{thm}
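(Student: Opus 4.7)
The plan proceeds in three stages: first, establish the sup-norm a priori estimate \eqref{moto} for any bounded classical solution via a suitable adaptation of Theorem \ref{maxprinc} to Hypothesis \ref{hyp-base}(v); second, deduce uniqueness by applying this estimate to the difference of two solutions with the same initial datum; third, construct a solution by approximating \eqref{pb_Cauchy} on exhausting balls $B_n\subset\Rd$ and passing to the limit using the interior Schauder estimates recalled in the Appendix.

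For the estimate, let $M:=\sup_{(t,x),i}\sum_j c_{ij}(t,x)$, which is finite by Hypothesis \ref{hyp-base}(v), and fix $K\ge M$. The constant-in-space vector function $\bm\Phi(t,x):=\|\f\|_\infty e^{K(t-s)}\bm{\one}$ satisfies $D_t\bm\Phi-\A(t)\bm\Phi=\|\f\|_\infty e^{K(t-s)}(K\bm{\one}-C(t,x)\bm{\one})\ge\bm{0}$ componentwise, and $\bm\Phi(s,\cdot)\ge\f$, so $\ww:=\uu-\bm\Phi$ is a bounded sub-solution of \eqref{pb_Cauchy} with $\ww(s,\cdot)\le\bm{0}$. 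I would then prove $\ww\le\bm{0}$ by replicating the contradiction scheme of Theorem \ref{maxprinc}: form the auxiliary function $\vv_n:=\ww-\tfrac{1}{n}e^{\lambda_0(t-s)}\bm\varphi_{[s,T]}$ with $\lambda_0>\lambda_{[s,T]}$; since this is a strict sub-solution which blows up to $-\infty$ uniformly in $t\in[s,T]$ as $|x|\to+\infty$, any failure $\vv_n\not\le\bm{0}$ yields a first contact time $\bar t_n$ and point $\bar x_n$ with $v_{n,k_n}(\bar t_n,\bar x_n)=0$, and the usual parabolic analysis at a maximum forces $(C(\bar t_n,\bar x_n)\vv_n(\bar t_n,\bar x_n))_{k_n}>0$. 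Under the cooperativity assumption of Theorem \ref{maxprinc} this already contradicts $\sum_{i\ne k_n} c_{k_n,i}v_{n,i}\le 0$; under the weaker Hypothesis \ref{hyp-base}(v), one instead splits $c_{k_n,i}=(c_{k_n,i}+L)-L$ with $L\ge-\inf_{(t,x),i\ne j}c_{ij}(t,x)$, uses $v_{n,k_n}(\bar t_n,\bar x_n)=0$ and $v_{n,i}(\bar t_n,\bar x_n)\le 0$ for $i\ne k_n$ to rewrite $\sum_i c_{k_n,i}v_{n,i}(\bar t_n,\bar x_n)$ as a nonpositive cooperative part plus a residue $-L\sum_{i\ne k_n}v_{n,i}(\bar t_n,\bar x_n)$, and then exploits the row-sum upper bound of Hypothesis \ref{hyp-base}(v) together with the strict Lyapunov gap $\lambda_0-\lambda_{[s,T]}>0$ to absorb this residue and reach the contradiction. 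Letting $n\to+\infty$ gives $\ww\le\bm{0}$, and the same argument applied to $-\uu$ (whose initial datum is $-\f$) yields $|u_k(t,x)|\le\|\f\|_\infty e^{K(t-s)}$ componentwise, hence \eqref{moto} after enlarging $K$ to absorb the multiplicative factor $\sqrt{m}$.

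For existence, solve for each $n\in\N$ and each $T>s$ the Cauchy-Dirichlet problem associated with \eqref{pb_Cauchy} on $[s,T]\times B_n$ with zero boundary data; since on any bounded cylinder the coefficients are bounded and H\"older continuous by Hypothesis \ref{hyp-base}(i),(ii), classical parabolic theory for weakly coupled linear systems on bounded domains provides a unique classical $\uu_n\in C([s,T]\times\overline{B_n};\Rm)\cap C^{1+\alpha/2,2+\alpha}((s,T]\times B_n;\Rm)$. The same max principle argument, which is simpler on a bounded cylinder since the Lyapunov machinery is not needed, gives $\|\uu_n\|_\infty\le e^{K(t-s)}\|\f\|_\infty$ uniformly in $n$. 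Applying the interior Schauder estimates from the Appendix on each compact subset of $(s,+\infty)\times\Rd$, one obtains uniform bounds in $C^{1+\alpha/2,2+\alpha}$; Arzel\`a-Ascoli with a standard diagonal extraction then yields a subsequence $\uu_{n_j}$ converging in $C^{1,2}_{\rm loc}((s,+\infty)\times\Rd;\Rm)$ to a limit $\uu$ which satisfies $D_t\uu=\A(t)\uu$ on $(s,+\infty)\times\Rd$ and inherits the sup-bound. The initial condition $\uu(s,\cdot)=\f$ is checked via a barrier argument at $t=s$ based on the Lyapunov function of Hypothesis \ref{hyp-base}(iv) together with the local continuity of $\f$. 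The main technical obstacle is the adaptation of Theorem \ref{maxprinc} to Hypothesis \ref{hyp-base}(v) described above, where the cooperativity $c_{ij}\ge 0$ has to be replaced by the one-sided bounds of Hypothesis \ref{hyp-base}(v), and the contradiction at the contact point must be driven by the Lyapunov strict gap in order to absorb the residual contributions coming from possibly negative off-diagonal entries of $C$.
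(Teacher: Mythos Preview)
Your overall architecture (a priori estimate $\Rightarrow$ uniqueness; Dirichlet approximation on balls + interior Schauder estimates + Arzel\`a--Ascoli $\Rightarrow$ existence) matches the paper. The difficulty is exactly where you locate it, namely the passage from Theorem~\ref{maxprinc} (which assumes $c_{ij}\ge 0$ for $i\neq j$ and nonpositive row sums) to the weaker Hypothesis~\ref{hyp-base}(v), and here your argument has a genuine gap.

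At the first contact point you obtain $(C\vv_n)_{k_n}>0$ and then write $(C\vv_n)_{k_n}=\sum_{i\neq k_n}(c_{k_n,i}+L)v_{n,i}-L\sum_{i\neq k_n}v_{n,i}$; the first sum is $\le 0$, so the contradiction hinges on controlling the residue $-L\sum_{i\neq k_n}v_{n,i}\ge 0$. You claim this is absorbed by the strict Lyapunov gap $\lambda_0-\lambda_{[s,T]}$, but that gap is of size $\frac{1}{n}e^{\lambda_0(\bar t_n-s)}(\lambda_0-\lambda_{[s,T]})\varphi_{k_n}(\bar x_n)$, i.e.\ $O(1/n)$, whereas the residue contains the term $-L\sum_{i\neq k_n}w_i(\bar t_n,\bar x_n)$, which is $O(1)$ (it has no factor $1/n$) and over which you have no sign control. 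The row-sum bound in Hypothesis~\ref{hyp-base}(v) does not help here because $v_{n,k_n}(\bar t_n,\bar x_n)=0$, so the diagonal entry $c_{k_n,k_n}$ drops out entirely and the constraint $\sum_j c_{k_n,j}\le M$ gives no information on $\sum_{i\neq k_n}c_{k_n,i}v_{n,i}$. In short, the contradiction does not close.

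The paper avoids this by a perturbation trick rather than a direct contact-point analysis: one subtracts from $C$ the \emph{constant} matrix $\overline{C}$ with $\overline{c}_{ij}=\inf_{I\times\Rd}c_{ij}$ for $i\neq j$ and $\overline{c}_{ii}=\sup_{I\times\Rd}\sum_k c_{ik}-\sum_{k\neq i}\overline{c}_{ik}$. Then $\A_0:=\A-\overline{C}$ has a potential $C-\overline{C}$ with nonnegative off-diagonal entries and nonpositive row sums, so Theorem~\ref{maxprinc} applies cleanly to $\A_0$ and yields a \emph{contractive} evolution operator $\G_0(t,s)$. The original problem is then written as $D_t\uu=\A_0(t)\uu+\overline{C}\uu$; the variation-of-constants formula $\uu(t,\cdot)=\G_0(t,s)\f+\int_s^t\G_0(t,r)(\overline{C}\uu(r,\cdot))\,dr$ combined with Gronwall gives \eqref{moto} with $K=\|\overline{C}\|$ and uniqueness at once. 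This reduction to the cooperative case is the missing idea.

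A secondary remark: your verification of the initial condition (``a barrier argument at $t=s$ based on the Lyapunov function'') is also too sketchy. The paper localises via a cutoff $\theta_R$, writes $\theta_R\uu_{n_j}$ through the variation-of-constants formula for the Dirichlet evolution operator on $B_R$, and controls the resulting source term uniformly in $j$ by the interior gradient estimate of Proposition~\ref{prop-A1}; the Lyapunov function plays no role in this step.
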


\begin{proof}
We split the proof into two steps. In the first one we consider the case when the off-diagonal elements of the matrix $C$ are nonnegative and the sum of the elements of each row of $C$ is nonpositive. In the second step we address the general case.

{\emph Step 1.}
To begin with, we prove that, if $\uu$ in $C_b([s,T]\times \Rd)\cap C^{1,2}((s,T)\times \Rd)$ is a solution to problem \eqref{pb_Cauchy}, then it is unique and satisfies the estimate
\begin{align}\label{stimaunif}
|u_{i}(t,x)|\leq \max_{k=1,\ldots,m}\|f_{k}\|_{\infty}
\end{align}
for every $(t,x)\in [s,T]\times\R^d$ and $i=1,\ldots, m$. For this purpose, it suffices to apply Theorem \ref{maxprinc} to the function
\begin{eqnarray*}
\vv:=\uu-\max_{k=1,\ldots,m}\|f_{k}\|_{\infty}\bm{\one}.
\end{eqnarray*}
Indeed, clearly $\vv\in C_b([s,T]\times\R^d;\R^m)\cap C^{1,2}((s,T]\times \R^d;\R^m)$ and
\begin{align*}
\vv(s,x)=\uu(s,x)-\max_{k=1,\ldots,m}\|f_{k}\|_{\infty}\bm{\one}=\f(x)-\max_{k=1,\ldots,m}\|f_{k}\|_{\infty}\bm{\one}\leq 0,
\end{align*}
for any $x\in \Rd$. Moreover,
\begin{align*}
D_tv_{k}-(\A(t)\vv)_{k}=&D_tu_{k}-(\A(t)\uu)_{k}+\max_{k=1,\ldots,m}\|f_{k}\|_{\infty}\sum_{i=1}^mc_{ki}\\
=&\max_{k=1,\ldots,m}\|f_{k}\|_{\infty}\sum_{i=1}^mc_{ki}\leq 0,
\end{align*}
due to the fact that $\sum_{i=1}^mc_{ki}\le 0$ in $(s,T]\times\Rd$.
Hence, Theorem \ref{maxprinc} implies that $\vv\leq 0$ in $[s,T]\times\Rd$ and the claim is so proved. By the arbitrariness of $T>s$ we get uniqueness in $[s,+\infty)\times \Rd$.

To prove the existence part let us consider the unique classical solution $\uu_n$ to the Dirichlet problem
\begin{equation*}\left\{\begin{array}{ll}
D_t\uu_n(t,x)=(\A(t)\uu_n)(t,x) & t>s,\ x\in B_n\\[1mm]
\uu_n(t,x)=0 & t>s,\ x\in\partial B_n\\[1mm]
\uu_n(0,x)=\f(x) &\quad\quad\quad x\in B_n,
\end{array}\right.\end{equation*}
(see \cite{Eid69}). By \cite[Theorem 8.15]{PW67}, $\uu_n$ satisfies \eqref{stimaunif} for any $n\in \N$, i.e.,
\begin{align}\label{exi_1}
\|u_{n,i}\|_\infty\leq \max_{k=1,\ldots,m}\|f_{k}\|_{\infty}
\end{align}
holds true for any $n\in \N$ and $i=1, \ldots, m$. The interior Schauder estimates in Theorem \ref{int:ext} together with estimate \eqref{exi_1} guarantee that the sequence $(\uu_n)$ is bounded in $C^{1+\alpha/2,2+\alpha}(E;\R^m)$ where $E$ is any compact subset of $(s,+\infty)\times\R^d$. Classical arguments involving the Ascoli--Arzel\`a theorem and a diagonal procedure allow us to determine a sequence $(\uu_{n_j})\subset(\uu_n)$ converging in $C^{1,2}(E;\Rm)$ to a function $\uu$ belonging to $C_b((s,+\infty)\times\R^d;\R^m)\cap C^{1+\alpha/2,2+\alpha}_{\text{loc}}((s,+\infty)\times\R^d;\R^m)$. Clearly $\uu$ solves the differential equation in \eqref{pb_Cauchy} and estimate \eqref{stimaunif}. To prove the claim we need to show that $\uu$ is continuous at $t=s$ where equals $\f$. For this purpose, we fix $R\in\N$ and let $\theta_R$ be any smooth function such that $\chi_{B_{R-1}}\leq \theta_R\leq \chi_{B_R}$. For any $j\in\N$ such that $n_j\geq R$ we set $\vv_j=\theta_R\uu_{n_j}$. Note that $\vv_j$ belongs to $C([s,T]\times\ol{B}_R;\R^m)\cap C^{1,2}((s,T]\times B_R;\R^m)$ and satisfies the problem
\begin{eqnarray*}
\eqsys{D_t\vv_j(t,x)-\A(t)\vv_j(t,x)=\mathbf{g}_j(t,x), & (t,x)\in(s,T]\times B_R,\\[1mm]
\vv_j(t,x)={\bf 0}, & (t,x)\in (s,T]\times \partial B_R,\\[1mm]
\vv_j(s,x)=\theta_R(x)\f(x), & x\in B_R,}
\end{eqnarray*}
where $g_{j,k}=-2\langle Q^k\nabla u_{n_j,k},\nabla\theta_R\rangle-u_{n_j,k}\mathcal{A}_k\theta_R$. Since all the hypotheses in Proposition \ref{prop-A1} are satisfied, by using \eqref{i_e} and \eqref{exi_1} we get
\begin{eqnarray*}
\abs{\mathbf{g}_j(t,x)}\leq K_R\pa{1+\frac{1}{\sqrt{t-s}}}\max_{k=1,\ldots,m}\|f_{k}\|_{\infty}
\end{eqnarray*}
for every $(t,x)\in (s,s+1)\times B_{R}$ and any $n_j>R$, where $K_R$ is a positive constant independent of $j$. We can write $\vv_j$ by means of the variation-of-constants formula
\begin{eqnarray*}
\vv_j(t,x)=(\mathbf{G}^{D}_R(t,s)(\theta_R\f))(x)+\int_s^t(\mathbf{G}^{D}_R(t,r)\mathbf{g}_j(r,\cdot))(x)dr\qquad t\in[s,T],\ x\in B_R,
\end{eqnarray*}
where $\mathbf{G}^{D}_R(t,s)$ denotes the evolution operator associated to $\A(t)$ in $C_b(\overline{B_R};\Rm)$ with homogeneous Dirichlet boundary conditions. Recalling that $\vv_j=\uu_{n_j}$ in $B_{R-1}$, we get
\begin{eqnarray*}
\abs{\uu_{n_j}(t,\cdot)-\f}\leq \abs{\mathbf{G}^{D}_R(t,s)(\theta_R\f)-\f}+K'_R\sqrt{t-s}\norm{\f}_{\infty}
\end{eqnarray*}
in $B_{R-1}$ for any $t\in(s,s+1)$,
where $K'_R$ is a positive constant independent of $j$. Now, letting $j$ tend to $+\infty$ and, then, $t$ to $s^+$, we conclude that $\uu$ is continuous on $\{s\}\times B_{R-1}$. The arbitrariness of $R$ yields the claim.

\emph{Step 2.}
Now, we consider the general case and prove the claim by using a perturbation argument. We introduce the $m\times m$ matrix $\overline{C}$ with entries $\overline{c}_{ij}=\inf_{I\times\Rd}c_{ij}$, if $i\neq j$, and $\overline{c}_{ii}= \sup_{I\times \Rd}\sum_{k=1}^m c_{ik}-\sum_{k\neq i}\overline{c}_{ik}$, and note that the Cauchy problem \eqref{pb_Cauchy} can be written as follows:
\begin{equation*}
\left\{
\begin{array}{ll}
D_t\uu=\A_0(t)\uu+\overline{C}\uu, &{\rm in}~(s,+\infty)\times\R^d,\\[1mm]
\uu(s,\cdot)=\f, &{\rm in}~\Rd,
\end{array}
\right.
\end{equation*}
where $\A_0:=\A-\overline{C}$ and the off-diagonal elements of the potential of $\A_0$ are nonnegative, whereas the sum of each row is nonpositive.
The existence part can be obtained arguing as in Step 1. Indeed, observing that for any $n\in\N$, the function $\uu_n$ satisfies the uniform estimate $\|\uu_n(t,\cdot)\|_\infty \le e^{\|\overline{C}\|(t-s)}\|\f\|_\infty$ for any $t>s$,
we can prove that problem \eqref{pb_Cauchy} admits a solution $\uu$
which belongs to $C_b([s,T]\times\Rd;\Rm)\cap C^{1+\alpha/2,2+\alpha}_{\rm loc}((s,+\infty)\times\Rd;\Rm)$ for any $T>s$.
Moreover, \eqref{moto} holds true with $K=\|\overline C\|$.

To prove the uniqueness of the solution, it suffices to point out that any solution $\uu$ to the problem \eqref{pb_Cauchy} which belongs to $C_b([s,T]\times\R^d;\R^m)\cap C^{1+\alpha/2,2+\alpha}_{\rm loc}((s,+\infty)\times\Rd;\Rm)$ for each $T>s$ can be written as follows
\begin{equation}\label{int_rep}
\uu(t,\cdot)= \G_0(t,s)\f+\int_s^t \G_0(t,r)(\overline{C}\uu(r,\cdot)))dr,
\end{equation}
where $\{\G_0(t,s):\, t\ge s \in I\}$ denotes the contractive evolution operator associated to $\A_0$ in $C_b(\Rd;\Rm)$. Formula \eqref{int_rep} and the Gronwall Lemma yield immediately
that $\|\uu(t,\cdot)\|_\infty \le e^{\|\overline{C}\|(t-s)}\|\f\|_\infty$ for every $t>s$, whence uniqueness follows.
\end{proof}

As a consequence of Theorem \ref{bbthe} we can define a family of bounded operators $\{\G(t,s)\}_{t\ge s\in I}$ on $C_b(\Rd;\Rm)$ by setting
$\G(t,s)\f=\uu(t,\cdot)$ for any $t>s\in I$, where $\uu$ is the unique solution to the Cauchy problem \eqref{pb_Cauchy} with $\f\in C_b(\Rd;\Rm)$.

\begin{rmk}\label{Neu}{\rm
We stress that the solution $\uu$ of the problem \eqref{pb_Cauchy} could be also approximated by
 the solution to the Neumann-Cauchy problem
\begin{equation*}\left\{\begin{array}{ll}
D_t\uu_n(t,x)=(\A(t)\uu_n)(t,x) & t>s,\ x\in B_n\\[1mm]
\langle \nabla_x\uu_n(t,x), \nu(x)\rangle=0 & t>s,\ x\in\partial B_n\\[1mm]
\uu_n(0,x)=\f(x) & x\in B_n
\end{array}\right.
\end{equation*}
where $\nu$ is the unit normal exterior vector to $\partial B_n$ which is governed by the Neumann evolution operator $\G^{N}_n(t,s)$. Also in this case the sequence $(\G^{N}_n(\cdot,s)\f)$ converges to $\uu$ in $C^{1,2}(E,\R^m)$ for any compact set $E\subset (s,+\infty)\times \Rd$.}
\end{rmk}

Here, we list some continuity properties of the evolution operator $\G(t,s)$ together with an integral representation formula. The proof of this results can be obtained arguing as in \cite[Proposition 3.2 \& Theorem 3.3]{AALT}.

\begin{thm}\label{teo}
If $(\f_n)$ is a bounded sequence of functions in $C_b(\R^d;\R^m)$ then the following properties hold true:
\begin{enumerate}[\rm (i)]
\item
if $\f_n$ converges pointwise to $\f\in C_b(\R^d;\R^m)$, then
${\bf G}(\cdot,s)\f_n$ converges to ${\bf G}(\cdot,s)\f$ in $C^{1,2}(E)$ for any compact set $E\subset (s,+\infty)\times\Rd$;
\item
if $\f_n$ converges to $\f$ locally uniformly in $\R^d$, then ${\bf G}(\cdot,s)\f_n$ converges to
${\bf G}(\cdot,s)\f$ locally uniformly in $[s,+\infty)\times\Rd$.
\end{enumerate}
Moreover, there exists a family of finite Borel measures $\{p_{ij}(t,s,x,dy):\, t>s\in I, x\in \Rd, i,j=1, \ldots, m\}$ such that
\begin{equation}\label{int_rep_1}
(\G(t,s)\f(x))_k= \sum_{i=1}^m\int_{\Rd}f_i(y)p_{ki}(t,s,x,dy), \qquad\;\, \f\in C_b(\Rd;\Rm).
\end{equation}
Finally, through formula \eqref{int_rep_1} $\G(t,s)$ can be extended to $B_b(\Rd;\Rm)$ with a strong Feller evolution operator.
\end{thm}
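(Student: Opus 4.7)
The plan is to follow the Schauder-compactness strategy of \cite[Proposition 3.2 and Theorem 3.3]{AALT}: combine the uniform bound \eqref{moto} with the interior Schauder estimates of Theorem~\ref{int:ext} to get $C^{1,2}_{\rm loc}$-compactness, identify the limit at the parabolic boundary by passing to the limit in the Dirichlet-ball estimate already used in the proof of Theorem~\ref{bbthe}, and finally derive the integral representation via Riesz--Markov.

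For part (i), set $\uu_n=\G(\cdot,s)\f_n$. By \eqref{moto} the sequence $(\uu_n)$ is uniformly bounded on $[s,T]\times\Rd$ for every $T>s$, so Theorem~\ref{int:ext} yields a uniform $C^{1+\alpha/2,2+\alpha}$-bound on each compact subset of $(s,+\infty)\times\Rd$. An Ascoli--Arzel\`a argument and a diagonal extraction produce a subsequence $(\uu_{n_j})$ converging in $C^{1,2}$ on such compact sets to a bounded limit $\vv$ that solves $D_t\vv=\A(t)\vv$ in $(s,+\infty)\times\Rd$. To show $\vv=\G(\cdot,s)\f$ I pass to the limit in the estimate established in the proof of Theorem~\ref{bbthe},
\[
|\uu_{n_j}(t,\cdot)-\f_{n_j}|\leq |\mathbf{G}^D_R(t,s)(\theta_R\f_{n_j})-\f_{n_j}|+K'_R\sqrt{t-s}\,\|\f_{n_j}\|_\infty
\]
on $B_{R-1}$ for $t\in(s,s+1)$. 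Since $\mathbf{G}^D_R(t,s)$ admits a finite kernel representation on $\overline{B_R}$, bounded pointwise convergence $\f_{n_j}\to\f$ together with dominated convergence yields $\mathbf{G}^D_R(t,s)(\theta_R\f_{n_j})\to\mathbf{G}^D_R(t,s)(\theta_R\f)$. Letting $j\to+\infty$ and then $t\to s^+$ shows $\vv$ extends continuously to $\{s\}\times B_{R-1}$ with value $\f$; as $R$ is arbitrary, $\vv$ is a bounded classical solution of \eqref{pb_Cauchy}, and the uniqueness statement in Theorem~\ref{bbthe} forces $\vv=\G(\cdot,s)\f$. Since every subsequence of $(\uu_n)$ has a further subsequence converging to $\G(\cdot,s)\f$, the whole sequence converges in $C^{1,2}(E)$.

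For part (ii), split any compact $K\subset[s,+\infty)\times\Rd$ as $K\cap([s+\delta,T]\times\overline{B_R})$ and $K\cap([s,s+\delta]\times\overline{B_R})$. On the former, part (i) delivers uniform convergence. On the latter, the same Dirichlet estimate bounds $|\uu_n-\f_n|$ on $B_{R-1}$ by a term that tends to zero as $t\to s^+$ uniformly in $n$, once we use that $\f_n\to\f$ locally uniformly on $\overline{B_R}$; the triangle inequality $|\uu_n-\uu|\leq |\uu_n-\f_n|+\|\f_n-\f\|_{\infty,\overline{B_{R-1}}}+|\f-\uu|$ concludes. For the integral representation, fix $(t,s,x)\in\Lambda_I\times\Rd$ and $k\in\{1,\ldots,m\}$. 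The map $\f\mapsto(\G(t,s)\f)_k(x)$ is a bounded linear functional on $C_b(\Rd;\Rm)$, and decomposing $\f=\sum_i f_i\e_i$ reduces it to scalar functionals $f\mapsto(\G(t,s)(f\e_i))_k(x)$ on $C_b(\Rd)$. Restricting to $C_0(\Rd)$, Riesz--Markov supplies finite signed Borel measures $p_{ki}(t,s,x,\cdot)$ representing them. The extension to $C_b$ is standard: choose cutoffs $\theta_N\uparrow 1$, apply part (i) to the bounded pointwise convergence $f\theta_N\to f$ on the left, and use dominated convergence on the right since the measures are finite. The same recipe defines $(\G(t,s)\f)_k(x)$ for $\f\in B_b(\Rd;\Rm)$ via \eqref{int_rep_1}; approximating $\f$ by a bounded sequence in $C_b(\Rd;\Rm)$ converging pointwise to $\f$ and invoking part (i) once more shows that the resulting extension is continuous in $x$, yielding the strong Feller property.

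The step I expect to be the main obstacle is the identification of the subsequential limit $\vv$ with $\G(\cdot,s)\f$ at the parabolic boundary $t=s$: $C^{1,2}$-compactness only carries information on the open set $(s,+\infty)\times\Rd$, and propagating the pointwise convergence of the initial data up to that boundary requires passing to the limit inside the Dirichlet-ball representation from Theorem~\ref{bbthe}, which is the single genuinely nontrivial ingredient beyond the standard compactness-plus-uniqueness package.
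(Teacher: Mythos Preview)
Your argument for (i), (ii) and the Riesz--Markov representation is correct and is exactly the route the paper indicates by citing \cite[Proposition~3.2 \& Theorem~3.3]{AALT}: interior Schauder estimates plus the uniform bound \eqref{moto} give $C^{1,2}_{\rm loc}$-compactness, the Dirichlet-ball estimate from the proof of Theorem~\ref{bbthe} identifies the limit at $t=s$, and uniqueness finishes. Your treatment of the parabolic boundary is the right one and your flagged ``main obstacle'' is handled correctly.

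There is one genuine gap in the last line. You write ``approximating $\f\in B_b(\Rd;\Rm)$ by a bounded sequence in $C_b(\Rd;\Rm)$ converging pointwise to $\f$ and invoking part (i)''. Such a sequence does \emph{not} exist for a general bounded Borel function (only Baire class~1 functions are pointwise limits of continuous ones), so the strong Feller property cannot be obtained in a single approximation step. The fix is standard: show that the set $\mathcal H:=\{\f\in B_b(\Rd;\Rm):\G(t,s)\f\in C_b(\Rd;\Rm)\}$ is a vector space containing $C_b(\Rd;\Rm)$ and closed under bounded pointwise sequential limits. Closure follows from your own ingredients---dominated convergence in \eqref{int_rep_1} gives pointwise convergence of $\G(t,s)\f_n$, Theorem~\ref{int:ext} and \eqref{moto} give equiboundedness in $C^{1+\alpha/2,2+\alpha}_{\rm loc}$, and Ascoli--Arzel\`a upgrades this to locally uniform convergence, so the limit is continuous. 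Since on $\Rd$ the bounded Borel functions coincide with the bounded Baire functions, iterating this closure along the Baire hierarchy (or invoking the functional monotone class theorem after first reaching indicators of open sets via Lemma~\ref{lemma-app}) yields $\mathcal H=B_b(\Rd;\Rm)$.
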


Now we are interested in finding conditions which ensure the positivity of the evolution operator $\G(t,s)$ in $C_b(\Rd;\Rm)$ in the sense that, if $\f\in C_b(\Rd;\Rm)$ has all nonnegative components, then the function $\G(t,s)\f$ has nonnegative components as well, for any $t>s$. Weakly coupled operators with the same principal part have been considered in \cite{AAL_Inv} extending the result proved in \cite{Ots88Ont} for operators with bounded coefficients. Similar results can be proved also in the case considered here, where, an additional assumption on the matrix-valued function $C$ guarantees also the strict positivity (with the obvious meaning) of the evolution operator $\G(t,s)$. In what follows, in order to simplify the notation we set $I_{\hat{i}}:=\{j\in \N, 1\le j\le m,\,\, j\neq i\}$.

\begin{hyp}\label{allpos}
The off-diagonal entries of the matrix-valued function $C$ are nonnegative.
\end{hyp}

\begin{prop}\label{pro}
Under Hypotheses $\ref{hyp-base}$ and $\ref{allpos}$, if $\f\in C_b(\Rd;\Rm)$ has all nonnegative components and it has at least a component which does not identically vanish in $\Rd$ then $(\G(t,s)\f)_j>0$ in $\Rd$ for any $t>s$ and $j=1,\ldots, m$. Consequently, for any $i,j=1, \ldots, m$, $t>s\in I$ and $x\in \Rd$, each measure $p_{ij}(t,s,x,\cdot)$ is positive and equivalent to the Lebesgue measure.
\end{prop}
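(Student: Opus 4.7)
The plan is first to establish the nonnegativity $\G(t,s)\f\ge\bm 0$, and then to upgrade it to strict positivity via the scalar strong maximum principle combined with the irreducibility condition in Hypothesis~\ref{hyp-base}(iii); the equivalence of each measure $p_{ij}(t,s,x,\cdot)$ with the Lebesgue measure will then drop out of the representation formula~\eqref{int_rep_1} together with the strict positivity.

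For the nonnegativity, I would go back to the approximation used in the proof of Theorem~\ref{bbthe}: $\uu:=\G(\cdot,s)\f$ is the $C^{1,2}_{\mathrm{loc}}$-limit, up to a subsequence, of the classical solutions $\uu_n$ of the Dirichlet--Cauchy problem on the balls $B_n$. On each $B_n$ all the coefficients of $\A(t)$ are bounded, the off-diagonal entries of $C$ are nonnegative by Hypothesis~\ref{allpos}, the initial datum is $\f\ge\bm 0$ and the boundary datum vanishes, so the classical weak maximum principle for cooperative weakly coupled parabolic systems with bounded coefficients (as in \cite[Theorem~8.15]{PW67}, already invoked in Theorem~\ref{bbthe}) yields $\uu_n\ge\bm 0$; passing to the limit gives $\G(t,s)\f\ge\bm 0$ on $\Rd$.

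For the strict positivity, I would argue by contradiction: assume that $u_k(t_0,x_0)=0$ at some $(t_0,x_0)\in(s,+\infty)\times\Rd$ and some $k\in\{1,\ldots,m\}$. Since all off-diagonal entries $c_{kj}$ are nonnegative and $u_j\ge 0$ for every $j$ by the first step, $u_k$ is a bounded nonnegative classical supersolution of the scalar parabolic equation
\begin{align*}
D_tu_k-\mathcal{A}_k(t)u_k-c_{kk}u_k=\sum_{j\neq k}c_{kj}u_j\ge 0,
\end{align*}
so the scalar parabolic strong maximum principle forces $u_k\equiv 0$ on $(s,t_0]\times\Rd$ and, consequently, each nonnegative summand $c_{kj}u_j$ has to vanish on the same set. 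I would then propagate this vanishing along the irreducibility graph of $C$: letting $K$ be the set of indices $j$ for which $u_j$ is identically zero on some $(s,\tau_j]\times\Rd$ with $\tau_j>s$, one has $k\in K$; if $K$ were a proper subset of $\{1,\ldots,m\}$, Hypothesis~\ref{hyp-base}(iii) would provide $i\in K$ and $j\notin K$ with $c_{ij}\not\equiv 0$ on $I\times\Rd$, and combining continuity of $c_{ij}$ with the identity $c_{ij}u_j\equiv 0$ on the slab where $u_i$ vanishes would exhibit a zero of $u_j$ in $(s,+\infty)\times\Rd$. One further application of the scalar strong maximum principle would then put $j$ in $K$ as well, a contradiction. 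Hence $K=\{1,\ldots,m\}$, every component of $\uu$ vanishes on some right-neighborhood of $s$, and the continuity of $\uu$ at $t=s$ provided by Theorem~\ref{bbthe} would give $\f\equiv\bm 0$, contradicting the standing assumption on $\f$.

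The equivalence of each $p_{ij}(t,s,x,\cdot)$ with the Lebesgue measure would then be obtained by testing \eqref{int_rep_1} against $\f=\eta\,\e_i$, with $\eta\in C_c(\Rd)$ nonnegative, not identically zero, and supported in an arbitrary Borel set $E\subset\Rd$ of positive Lebesgue measure: the strict positivity just established forces
\begin{align*}
\int_{\Rd}\eta(y)\,p_{ji}(t,s,x,dy)=(\G(t,s)(\eta\,\e_i))_j(x)>0,
\end{align*}
so $p_{ji}(t,s,x,E)>0$; absolute continuity in the opposite direction is encoded in the strong Feller character of $\G(t,s)$ provided by Theorem~\ref{teo}. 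The main technical obstacle in this plan is the propagation step through the irreducibility graph: Hypothesis~\ref{hyp-base}(iii) guarantees the nondegeneracy of the relevant $c_{ij}$ only on the full cylinder $I\times\Rd$, whereas the strong maximum principle argument requires nondegeneracy on the specific time-slab $(s,\tau_i]\times\Rd$, and this transition is the delicate point in the nonautonomous setting where the argument has to be run with care.
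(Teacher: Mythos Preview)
Your strict-positivity argument by contradiction through the scalar strong maximum principle is a genuinely different route from the paper's. The paper works constructively at the level of the Dirichlet approximations $\G^D_n(t,s)$: it first compares $(\G^D_n(t,s)\f)_k\ge G^D_{n,k}(t,s)f_k>0$ with the irreducible scalar Dirichlet evolution, and then propagates positivity to the remaining components through a variation-of-constants formula $u^n_j(t,\cdot)\ge\int_s^t G^D_{n,j}(t,r)(c_{j\ell}(r,\cdot)u^n_\ell(r,\cdot))\,dr$, following a chain $k\to\ell_r\to\cdots\to\ell_1\to j$ built explicitly from Hypothesis~\ref{hyp-base}(iii). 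Your contradiction approach is cleaner in spirit but, as you correctly flag, runs into the time-slab issue: Hypothesis~\ref{hyp-base}(iii) only guarantees $c_{ij}\not\equiv 0$ on $I\times\Rd$, not on the specific slab $(s,\tau_i]\times\Rd$ where you need it. The paper's constructive route faces the same subtlety (one needs $c_{j\ell}\not\equiv 0$ on $(s,t)\times B_n$ for the integral to be positive), so neither approach fully escapes it in the nonautonomous setting without some additional care.

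There is, however, a clear gap in your argument for the equivalence of $p_{ji}(t,s,x,\cdot)$ with Lebesgue measure. You propose to take $\eta\in C_c(\Rd)$ nonnegative, not identically zero, and supported in an arbitrary Borel set $E$ of positive Lebesgue measure; but such an $\eta$ need not exist, since $E$ may have empty interior (a fat Cantor set, for instance). The paper avoids this by working directly with $\chi_A$ via the Borel extension of $\G(t,s)$: it shows $(\G(t,s)(\chi_A\e_j))_j\ge G_j(t,s)\chi_A$, where $G_j$ is the scalar evolution associated to $\mathcal{A}_j+c_{jj}$, and the right-hand side is strictly positive when $|A|>0$ by the scalar theory; then it invokes the evolution law at time $s+\varepsilon$ so that the now continuous datum $\G(s+\varepsilon,s)(\chi_A\e_j)$ falls under the strict-positivity result already established. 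Finally, the strong Feller property alone does not encode absolute continuity of $p_{ij}$ with respect to Lebesgue; a separate argument is needed (the paper refers to \cite[Theorem~3.3]{AALT}) to show that $\G(t,s)(\chi_A\e_j)=\bm{0}$ whenever $|A|=0$.
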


\begin{proof}
We split the proof into three steps.

{\em Step 1.} Here, for each $k=1,\ldots,m$ and $i\in\N$, we introduce the sets $H_k^i$, defined by
\begin{equation*}
\left\{\begin{array}{ll}
H_{k}^0=\{j\in\{1,\ldots,m\}\setminus\{k\}: c_{jk}\not\equiv 0\,\,{\rm in}\,\,I\times\Rd\},\vspace{2mm}\\
H_{k}^i=\{j\in \{1,\ldots,m\}\setminus \{k\}\cup\bigcup_{r=0}^{i-1} H^r_k: \exists l\in H^{i-1}_k\,{\rm s.t.}\, c_{jl}\not\equiv 0\,\,{\rm in}\,\,I\times\Rd\},
\end{array}
\right.
\end{equation*}
and prove that, for each $k$, there exists $m_k<m$ such that $H^i_{k}\neq\varnothing$ $(i=1,\ldots,m_k)$ and
$\{1,\ldots,m\}\setminus\{k\}=\bigcup_{i=0}^{m_k}H_{k}^i$.

Let us fix $k\in\{1,\ldots,m\}$ and suppose, by contradiction, that $H^0_k=\varnothing$. This would imply that $c_{jk}=0$ for any $j\neq k$. Clearly this condition contradicts Hypothesis \ref{hyp-base}(iii), taking $K=\{k\}$. Let us now fix $r>0$ such that $\bigcup_{j=0}^rH_k^j$ is properly contained in the set $\{1,\ldots,m\}\setminus\{k\}$ and prove that $H^{r+1}_k\neq\varnothing$. On the contrary, let us assume that $H^{r+1}_k=\varnothing$. This means that,
for any $i\notin H^0_k\cup\cdots H^r_k\cup\{k\}$ and $\ell\in H^r_k$, $c_{i\ell}$ identically vanishes in $I\times\Rd$.
By the definitions of $H_k^i$, $i=0, \ldots, r$, it follows that $c_{ij}$ identically vanishes in $I\times \Rd$ for any $j\in\{k\}\cup H^0_k\cup\cdots H^{r-1}_k$. Summing up we conclude that $c_{ij}\equiv 0$ in $I\times \Rd$ for any $j\in\{k\}\cup H^0_k\cup\cdots H^{r}_k$ and $i \notin \{k\}\cup H^0_k\cup\cdots H^{r}_k$ contradicting again Hypothesis \ref{hyp-base}(iii), taking $K=\{k\}\cup H^0_k\cup\cdots H^{r}_k$. The second statement now follows immediately.

{\em Step 2.} Here, we prove the first part of the claim.
Let $\f\in C_b(\Rd;\Rm)$ be such that $f_k$ does not identically vanish in $\Rd$ and let us show that $(\G^{{D}}_n(t,s)\f)_j$ is positive in $\Rd$ for any $t>s\in I$  and $j\in\{1,\ldots, m\}$. Then, letting $n$ tends to infinity we get the claim by monotonicity.
Let us consider first the case $j=k$ and let $G^{{D}}_{n,k}(t,s)$ be the evolution operator associated to the operator $\mathcal{A}_k+c_{kk}$ in $C(\overline B_n)$ with homogeneous Dirichlet boundary conditions. Since $G^{{D}}_{n,k}(t,s)$  is irreducible, it is known that $G^{{D}}_{n,k}(t,s)f_k>0$ in $\Rd$ for any $t>s$. Taking into account that $(\G^{{D}}_n(\cdot,s)\f)_j$ is nonnegative in $(s,+\infty)\times B_n$ for any $j\in \{1,\ldots, m\}$ (see \cite[Proposition 2.8]{AAL_Inv} with the obvious changes) and that the off-diagonal entries of $C$ are nonnegative functions, using a scalar maximum principle we deduce that
\begin{equation}\label{ma}(\G^{{D}}_n(t,s)\f)_k)(x)\ge (G^{{D}}_{n,k}(t,s)f_k)(x)>0,\qquad (t,x)\in (s,+\infty)\times B_n.
\end{equation}
Now, we fix $j\in\{1,\ldots, m\}\setminus\{k\}$. Clearly, if $f_j$ does not identically vanish the claim follows immediately arguing as above. Hence, let us assume that $f_j\equiv 0$ in $\Rd$. Since $j$ belongs to $\bigcup_{r=0}^{m}H_k^r$ and $H^i_k\cap H_k^j=\varnothing$ for $i\neq j$, there exists a unique $r\in\{0,\ldots,m_k\}$ such that $j\in H_k^{r}$. Now, if $r=0$ then $c_{jk}$ does not identically vanish in $I\times\Rd$ and,
since $u^n_j:=(\G^{{D}}_n(\cdot,s)\f)_j$ satisfies the equation
$D_t u^n_j= \mathcal{A}_j u^n_j+c_{jj}u^n_j+\sum_{h\neq j}c_{jh}u_h^n$ in $(s,+\infty)\times B_n$, we get
\begin{align}\label{mar}
u^n_j(t,\cdot)&= G^{D}_{n,j}(t,s)f_j+\sum_{i\neq j}\int_s^t G^{D}_{n,j}(t,r)(c_{ji}(r,\cdot)u_i^n(r,\cdot))dr\notag\\
&=\sum_{i\neq j}\int_s^t G^{D}_{n,j}(t,r)(c_{ji}(r,\cdot)u_i^n(r,\cdot))dr\notag\\
& \ge \int_s^t G^{D}_{n,k}(t,r)(c_{jk}(r,\cdot)u_k^n(r,\cdot))dr
\end{align}
and the last side of \eqref{mar} is strictly positive in $\Rd$ for any $t>s\in I$. Otherwise if $r>0$, then by definition of $H_k^{r}$, we deduce that there exists $\ell_1\in H_k^{r-1}$ such that $c_{j\ell_1}$ does not identically vanish in $I\times \Rd$. Iterating this argument we conclude that for any $h\le r$ there exist $\ell_h\in H_k^{r-h}$ such that $c_{\ell_{h-1}\ell_h}$ does not identically vanish in $I\times \Rd$. In particular, since $\ell_{r}\in H^0_k$,  $c_{\ell_{r-1}\ell_{r}}\not\equiv 0$ in $I\times \Rd$ and, consequently $c_{\ell_rk}$, does not identically vanish in $I\times \Rd$. The above arguments imply that $(\G^{{D}}_n(\cdot,s)\f)_{\ell_r}$ is positive in $(s,+\infty)\times \Rd$. But, again, since $c_{\ell_{r-1}\ell_r}\not\equiv 0$ in $I\times \Rd$ we get that $(\G^{{D}}_n(\cdot,s)\f)_{\ell_{r-1}}$ is positive in $(s,+\infty)\times \Rd$. Iterating this procedure we finally conclude that $(\G^{{D}}_n(\cdot,s)\f)_{j}$ is positive in $(s,+\infty)\times \Rd$.

 As a byproduct we deduce that for any $t>s$, $x\in \Rd$ and $i,j=1, \ldots, m$ the measure $p_{ij}(t,s,x,dy)$ is positive. Indeed, $p_{ij}(t,s,x,\Rd)= (\G(t,s)\e_j)_i(x)>0$.\\
{\em Step 3.} Here we prove that the measures $\{p_{ij}(t,s,x,dy): t>s,\, x\in \Rd,\, i,j=1, \ldots, m\}$ are equivalent to the Lebesgue measure.
Arguing as in \cite[Theorem 3.3]{AALT} it can be proved that if $A$ is a Borel set with null Lebesgue measure then $\G(t,s)(\chi_A \e_j)(x)=\bf{0}$ for any $t>s$, $x\in \Rd$ and $j=1, \ldots, m$. Consequently, since
\begin{equation}\label{ker}
p_{ij}(t,s,x,A)= (\G(t,s)(\chi_A \e_j))_i(x),
\end{equation}
each $p_{ij}(t,s,x,dy)$ is absolutely continuous with respect to the Lebesgue measure.
On the other hand, let us assume that $p_{ij}(t,s,x,A)=0$ for any $i,j,t,s$ and $x$ as above and prove that the Lebesgue measure of $A$ is zero. Suppose, by contradiction, that this measure is positive. Then, the strong Feller property of $\G^{D}_{n}(t,s)$ and $G^{D}_{n,k}(t,s)$ allows to extend estimate \eqref{ma} to any bounded Borel function. In particular $(\G^{D}_{n}(t,s)\chi_A \e_j)_j \ge G^{D}_{n,j}(t,s)\chi_A$ for any $t>s$ and $j=1,\ldots, m$.
Letting $n \to +\infty$ we infer that $(\G(t,s)\chi_A \e_j)_j \ge G_j(t,s)\chi_A>0$ for any $t>s$.
The vector-valued function $\G(t,s)(\chi_A \e_j)$ is the unique solution to the Cauchy problem
\begin{equation*}
\left\{ \begin{array}{ll}
D_t \uu= \A(t) \uu,\qquad\; &(s+\varepsilon, +\infty)\times \Rd,\\
\uu(s+\varepsilon,\cdot)= \G(s+\varepsilon,s)(\chi_A \e_j),\qquad\; &\Rd
\end{array}
\right.
\end{equation*}
for any $\varepsilon>0$. Thus, since $\G(s+\varepsilon,s)(\chi_A \e_j)$ is a bounded, continuous, nonnegative and not identically vanishing function, by the first part of the proof we conclude that $(\G(t,s)(\chi_A \e_j))_i$ is positive for any $t>s$ and $i=1,\ldots, m$ contradicting formula \eqref{ker}.
\end{proof}

%

\section{Compactness of $\G(t,s)$ in the space of continuous functions}\label{comp-sect}

In this section we prove some compactness results for the evolution operator $\G(t,s)$ in the space of continuous and bounded functions. The main results are stated in Theorems \ref{thm_comp1} and \ref{thm_comp_2}. More precisely, the first theorem provides us with sufficient conditions for the evolution operator $\G(t,s)$ to be \emph{locally compact} in $C_b(\Rd;\Rm)$ uniformly with respect to $t>s\in I$, in the sense that for any $s\in I$ and $(\f_n)_n\subset C_b(\Rd;\Rm)$, the sequence $(\G(\cdot,s)\f_n)_n$ admits a subsequence which converges uniformly in $(t_0,+\infty)\times B_k$ for any $k >0$ and some $t_0\ge s\in I$.
The second result is concerned with the compactness of the evolution operator $\G(t,s)$ in $C_b(\Rd;\Rm)$ for $(t,s)\in \Lambda_J$ and bounded $J\subset I$. To prove these results we need to straighten the hypotheses on the coefficients of the operator \eqref{op_A}.

\begin{hyp}\label{Lya2}
\begin{enumerate}[\rm (i)]
\item
For any bounded interval $J\subset I$ there exist $m$-nonnegative functions $\psi_k^J\in C^2(\Rd)$ $(k=1, \ldots, m)$, blowing up as $|x|\to +\infty$, a real constant $\delta_J>0$
such that
\begin{eqnarray*}
(\mathcal{A}_k(t)\psi_k^J)(x) \le \delta_J\psi_k^J(x),\qquad\;\, t\in J,\;\, x\in \Rd,\, k=1, \ldots, m;
\end{eqnarray*}
\item
the sum of the elements of each row of the matrix-valued function $C$ is nonpositive in $\Rd$.
\end{enumerate}
\end{hyp}

\begin{lemm}\label{intbypa}
Under Hypotheses $\ref{hyp-base}(i)$-$(iii)$, $\ref{allpos}$ and $\ref{Lya2}$, for any $x\in\Rd$ and $\f \in C^2_b(\Rd;\Rm)$ constant and nonnegative outside a ball, the function $(\G(t,\cdot)\bm\A(\cdot)\f)(x)$ is locally integrable in $I\cap (-\infty, t]$ and
\begin{equation}\label{isem}
(\G(t,s_1)\f)(x)-(\G(t,s_0)\f)(x)\ge -\int_{s_0}^{s_1}(\G(t,\sigma)\bm\A(\sigma)\f)(x)d\sigma
\end{equation}
for any $s_0\le s_1 \le t$ and $x\in \Rd$.
\end{lemm}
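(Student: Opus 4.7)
My plan is to establish the identity first as an equality for the dense subclass $\f\in C^2_c(\Rd;\Rm)$ via a bounded-domain approximation, and then to recover the one-sided inequality for the general $\f$ through a Lyapunov-based cutoff argument, passing to the limit under the integral using positivity of $\G(t,\sigma)$ and Fatou--Beppo-Levi arguments. Concretely, for $\f\in C^2_c$ the function $\A(\cdot)\f$ is continuous and uniformly bounded on $J\times\Rd$ for every compact $J\subset I$. Using the Neumann approximation $\G^N_k(t,s)$ of Remark \ref{Neu} (with $k$ large enough that $\supp\f\subset B_{k-1}$, so that $\f$ trivially satisfies the homogeneous Neumann boundary condition on $\partial B_k$), the classical nonautonomous theory of parabolic evolution equations on bounded domains gives the Duhamel-type identity
\begin{equation*}
\G^N_k(t,s_1)\f-\G^N_k(t,s_0)\f=-\int_{s_0}^{s_1}\G^N_k(t,\sigma)\A(\sigma)\f\,d\sigma.
\end{equation*}
The uniform bound $\|\G^N_k(t,\sigma)\A(\sigma)\f\|_\infty\le e^{K(t-\sigma)}\|\A(\sigma)\f\|_\infty$ together with the pointwise convergence $\G^N_k\to\G$ of Remark \ref{Neu} and dominated convergence then produce the equality version of \eqref{isem}.

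For the general $\f$ I would pick cutoffs $\theta_n\in C^\infty_c(\Rd)$ with $\mathbf{1}_{B_n}\le\theta_n\le\mathbf{1}_{B_{n+1}}$, built as a smooth function of the vector Lyapunov $\bm\varphi_J$ of Hypothesis \ref{hyp-base}(iv), and set $\f_n:=\theta_n\f\in C^2_c$. Since $\nabla\f\equiv\bm 0$ outside $B_R$ and $\nabla\theta_n\equiv 0$ on $B_n$ for $n>R$, the mixed term $\langle Q^k\nabla\theta_n,\nabla f_k\rangle$ vanishes and a direct Leibniz computation yields
\begin{equation*}
\A(\sigma)\f_n=\theta_n\,\A(\sigma)\f+\g_n(\sigma,\cdot),\qquad (\g_n)_k:=c_k\,\mathcal{A}_k(\sigma)\theta_n,
\end{equation*}
with $\g_n$ supported in the annulus $B_{n+1}\setminus B_n$. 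Applying the equality just established to $\f_n\in C^2_c$ and separating the two contributions gives
\begin{equation*}
\G(t,s_1)\f_n-\G(t,s_0)\f_n=-\int_{s_0}^{s_1}\G(t,\sigma)(\theta_n\A(\sigma)\f)\,d\sigma-\int_{s_0}^{s_1}\G(t,\sigma)\g_n(\sigma,\cdot)\,d\sigma.
\end{equation*}

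To pass to the limit at $x\in\Rd$, I note that Theorem \ref{teo}(i) gives $(\G(t,s_i)\f_n)(x)\to(\G(t,s_i)\f)(x)$ since $\f_n\to\f$ pointwise and boundedly. For the main term I would split $\A(\sigma)\f=[\A(\sigma)\f]^+-[\A(\sigma)\f]^-$ componentwise: the functions $\theta_n[\A\f]^\pm$ are nonnegative and monotonically increase to $[\A\f]^\pm$, so the positivity of $\G(t,\sigma)$ (Proposition \ref{pro}, extended to nonnegative measurable data by monotone approximation) together with Beppo--Levi give
\begin{equation*}
\int_{s_0}^{s_1}(\G(t,\sigma)(\theta_n[\A\f]^\pm))(x)\,d\sigma\;\uparrow\;\int_{s_0}^{s_1}(\G(t,\sigma)[\A\f]^\pm)(x)\,d\sigma\in[0,+\infty].
\end{equation*}
The Lyapunov-based choice of $\theta_n$ produces a bound $|\g_n|\le C\bm\varphi_J\mathbf{1}_{B_{n+1}\setminus B_n}$ and, combined with the a priori estimate $\G(t,\sigma)\bm\varphi_J\le e^{\lambda_J(t-\sigma)}\bm\varphi_J$ and the fact that the support of $\g_n$ recedes to infinity, forces $\limsup_{n\to\infty}\int_{s_0}^{s_1}(\G(t,\sigma)\g_n)(x)\,d\sigma\le 0$. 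Since the left-hand side converges to the finite quantity $\G(t,s_1)\f-\G(t,s_0)\f$, Fatou applied to the a priori sign-indeterminate and possibly unbounded parts of $\A(\sigma)\f$ forces both the local integrability of $(\G(t,\cdot)\A(\cdot)\f)(x)$ and the one-sided inequality \eqref{isem}.

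The hardest point will be the concrete choice of $\theta_n$ together with the Lyapunov estimates needed to produce the bound on $\g_n$, and the careful bookkeeping for the Fatou step when one or both of $\int\G[\A\f]^\pm$ might be large. The assumption that $\f$ is nonnegative outside the ball, combined with $c_{ki}\ge 0$ for $i\ne k$ (Hypothesis \ref{allpos}) and the nonpositive row sums of $C$ (Hypothesis \ref{Lya2}(ii), whence $c_{kk}\le 0$), supplies the sign structure needed to make monotone convergence for $\theta_n[\A\f]^\pm$ effective and to force the direction $\ge$ (rather than equality) in the limit via the annular error term.
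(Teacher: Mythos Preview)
Your first step---the equality for $\f\in C^2_c(\Rd;\Rm)$ via the bounded-domain approximation---is correct and essentially the same as the paper's (which uses the Dirichlet rather than the Neumann approximants, but this is immaterial).

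The second step, however, has a genuine gap. Your plan is to truncate $\f$ via $\f_n=\theta_n\f$ and to control the commutator $\g_n$, with $(\g_n)_k=c_k\,\mathcal{A}_k(\sigma)\theta_n$, by the Lyapunov estimate $\G(t,\sigma)\bm\varphi_J\le e^{\lambda_J(t-\sigma)}\bm\varphi_J$. This is circular: at this stage of the paper $\G(t,\sigma)$ is only defined on \emph{bounded} data, and the fact that $\G(t,\sigma)$ applied to an unbounded Lyapunov function is finite is precisely the content of Lemma~\ref{pre_lem}, whose proof relies on Lemma~\ref{intbypa}. Without that finiteness you cannot conclude that $\G(t,\sigma)(\bm\varphi_J\chi_{B_{n+1}\setminus B_n})(x)\to 0$, and since $\g_n$ carries the unbounded diffusion and drift coefficients $Q^k,\bb^k$ on the receding annulus, no alternative uniform bound is available. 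There is also a secondary difficulty: $\bm\varphi_J$ in Hypothesis~\ref{hyp-base}(iv) is vector-valued and the inequality $\A(t)\bm\varphi_J\le\lambda_J\bm\varphi_J$ is coupled through $C$, so it is unclear how a \emph{scalar} cutoff $\theta_n$ ``built from $\bm\varphi_J$'' would yield the pointwise bound $|\g_n|\le C\bm\varphi_J\chi_{B_{n+1}\setminus B_n}$ you assert.

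The paper avoids all of this by a different reduction. By linearity and the $C^2_c$-case it suffices to treat $\f=\bm{\one}$, for which $\A(\sigma)\bm{\one}=C(\sigma,\cdot)\bm{\one}$ involves only the potential. The truncation is then performed on $C$, not on $\f$: set $C_n=\vartheta_n C$, consider the operators $\A_n=\A-C+C_n$ (which still satisfy Hypotheses~\ref{hyp-base}), and show that the associated full-space evolution operators $\G_n(t,s)$ converge to $\G(t,s)$. For each bounded $C_n$ the \emph{equality} holds via the Neumann approximation, and the row-sum condition of Hypothesis~\ref{Lya2}(ii) gives the monotonicity $C_m\bm{\one}\le C_n\bm{\one}$ for $m>n$; positivity of $\G_m$ then turns the equality into the desired inequality before one lets $m\to\infty$ and finally $n\to\infty$ by monotone convergence. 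In this scheme the unbounded $Q^k$ and $\bb^k$ never appear in the error term, and no Lyapunov control of $\G$ on unbounded data is required.
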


\begin{proof}
First of all, we show that
\begin{equation}\label{suppcomp}
(\G(t,s_1)\f)(x)- (\G(t,s_2)\f)(x)=-\int_{s_1}^{s_2}(\G(t,\sigma)\bm\A(\sigma)\f)(x)d\sigma
\end{equation}
for any $\f \in C^2_c(\Rd;\Rm)$. To this aim, let us consider the evolution operator $\G^D_n(t,s)$ associated to $\bm \A$ in $C_b(B_n;\Rm)$ with homogeneous Dirichlet boundary conditions.
It is well known that, for any $\f\in C^2_c(\Rd;\Rm)$ and $n$ sufficiently large such that ${\rm supp}(f_i)\subset B_n$ for any $i=1, \ldots, m$, it holds that
\begin{eqnarray*}
(\G^D_n(t,s_1)\f)(x)- (\G^D_n(t,s_2)\f)(x)=-\int_{s_1}^{s_2}(\G^D_n(t,\sigma)\bm\A(\sigma)\f)(x)d\sigma
\end{eqnarray*}
for any $s_0\le s_1 \le t$ and $x\in \Rd$. Since the function $\bm\A(\sigma)\f\in C_b(\Rd;\Rm)$ for any $\sigma \in [s_1,s_2]$, using the approximation arguments in the proof of Theorem \ref{bbthe}, we can let $n$ tend to $+\infty$ and deduce \eqref{suppcomp}, by the dominated convergence theorem.

Now, let $\f$ be as in the statement. Thanks to \eqref{suppcomp} and to the linearity of $\G(t,s)$, we can limit ourselves to proving \eqref{isem} for $\f=\bm{\one}$.
First, assume that all the entries of the matrix-valued function $C$ are bounded in $J\times \Rd$ for any bounded $J\subset I$. In this case, since $\bm{\one}$ belongs to the domain of the generator of the evolution operator $\G_n^{N}(t,s)$ associated to $\bm \A$ in $C_b(B_n;\Rm)$ with homogeneous Neumann boundary conditions, it follows that
\begin{eqnarray*}
(\G^{N}_n(t,s_1)\bm{\one})(x)- (\G^{N}_n(t,s_2)\bm{\one})(x)=-\int_{s_1}^{s_2}(\G^{ N}_n(t,\sigma)(C(\sigma,\cdot)\bm{\one}))(x)d\sigma.
\end{eqnarray*}
By Remark \ref{Neu}, estimate \eqref{stimaunif} and the dominated convergence theorem we get
\begin{equation*}
(\G(t,s_1)\bm{\one})(x)- (\G(t,s_2)\bm{\one})(x)=-\int_{s_1}^{s_2}(\G(t,\sigma)(C(\sigma,\cdot)\bm{\one}))(x)d\sigma.
\end{equation*}

Finally, if the matrix-valued function $C$ is unbounded, we can consider a sequence of functions $\vartheta_n\in C_c(\Rd)$ such that $\chi_{B_n}\le \vartheta_n\le \chi_{B_{n+1}}$ for any $n \in \N$, and set $C_n=\vartheta_n C$ for any $n\in \N$. Clearly, thanks to Hypothesis \ref{Lya2}, for any $n\in \N$ the operator $\bm\A_n(t)=\A(t)-C(t,\cdot)+C_n(t,\cdot)$ satisfies Hypotheses \ref{hyp-base}. Thus, we can consider the positive evolution operator $\G_n(t,s)$ associated with $\bm\A_n$ in $C_b(\Rd;\Rm)$.
Since $C_m\in C(I;C_c(\Rd;\Rm))$ and, by Hypothesis \ref{Lya2}(ii), $C_m\bm{\one}\le C_n\bm{\one}$ for any $m>n$ we can estimate
\begin{align}\label{est}
(\G_m(t,s_1)\bm{\one})(x)- (\G_m(t,s_2)\bm{\one})(x)=& -\int_{s_1}^{s_2}(\G_m(t,\sigma)(C_m(\sigma,\cdot)\bm{\one}))(x)d\sigma\notag\\
\ge & -\int_{s_1}^{s_2}(\G_m(t,\sigma)(C_n(\sigma,\cdot)\bm{\one}))(x)d\sigma
\end{align}
for any $m>n$, $m\in \N$. We now observe that $\G_m(t,s)\f$ converges to $\G(t,s)\f$ pointwise in $\Rd$, for any $I\in s<t$, as $m\to +\infty$ for any $\f\in C_b(\Rd,\Rm)$.
Indeed, the Schauder estimates in Theorem \ref{int:ext} show that there exists a subsequence $(m_k)$ such that $\G_{m_k}(\cdot,s)\f$ converges to a function ${\bm v}\in C^{1,2}((s,+\infty)\times\Rd;\R^m)$. Function ${\bm v}$ is bounded since each $\G_m(\cdot,s)\f$ is bounded in $(s,+\infty)\times\Rd$.
To identify ${\bm v}$ with $\G(\cdot,s)\f$, we need to show that $\bm{v}$ can be extended by continuity on $\{s\}\times\Rd$, where it equals $\f$.
For this purpose, we start considering $\f\in C^2_c(\Rd;\Rm)$ and note that formula \eqref{suppcomp} holds true with the evolution operator $\G(t,s)$ being replaced by $\G_m(t,s)$. From that formula it is clear that
\begin{eqnarray*}
\|\G_{m_k}(t,s)\f-\f\|_{\infty}\le c(t-s)\|\f\|_{\infty},\qquad\;\,t>s.
\end{eqnarray*}
Letting $k$ tend to $+\infty$, the continuity of ${\bm v}$ at $t=s$ follows at once. The above arguments also show that from any subsequence of $(\G_m(\cdot,s)\f)$ we can extract a subsequence which converges (locally uniformly on $(s,+\infty)\times\Rd$) to $\G(\cdot,s)\f$. Thus, all the sequence $(\G_m(\cdot,s)\f)$ converges to $\G(\cdot,s)\f$ as $m\to +\infty$. A density argument shows that $\vv$ is continuous on $\{s\}\times\Rd$, where it equals $\f$, also when $\f$ is continuous in $\Rd$ with compact support. Moreover,
all the sequence $(\G_m(\cdot,s)\f)$ converges to $\G(\cdot,s)\f$ as $m\to +\infty$.
For a general $\f\in C_b(\Rd)$, we fix $M>0$ and a smooth function $\vartheta$ such that $\chi_{B_M}\le\vartheta\le\chi_{B_{2M}}$. We split $\G_m(t,s)\f=\G_m(t,s)(\vartheta\f)+\G_m(t,s)((1-\vartheta)\f)$. Since $\G_m(t,s)$ is a positive evolution operator and $|(1-\vartheta)\f|\le (1-\vartheta)\|\f\|_{\infty}\1$ componentwise, we can estimate
\begin{align*}
|\G_m(t,s)((1-\vartheta)\f)|\le &\|\f\|_{\infty}\G_m(t,s)((1-\vartheta)\1)=\|\f\|_{\infty}[\G_m(t,s)\1-\G_m(t,s)(\vartheta\1)]\\
\le &\|\f\|_{\infty}[\1-\G_m(t,s)(\vartheta\1)],
\end{align*}
where we have used Theorem \ref{maxprinc} to derive the last inequality.
Thus,
\begin{eqnarray*}
|\G_{m_k}(t,s)\f-\f|\le |\G_{m_k}(t,s)(\vartheta\f)-\f|+\|\f\|_{\infty}[\1-\G_{m_k}(t,s)(\vartheta\1)].
\end{eqnarray*}
Letting $k$ tend to $+\infty$, we obtain
\begin{eqnarray*}
|\vv(t,\cdot)-\f|\le |\G_{m_k}(t,s)(\vartheta\f)-\f|+\|\f\|_{\infty}[\1-\G(t,s)(\vartheta\1)].
\end{eqnarray*}
From this inequality, it follows that $\vv$ tends to $\f$ as $t\to s^+$, uniformly with respect to $x\in B_M$. The arbitrariness of $M>0$ allows us to conclude that
$\vv=\G(\cdot,s)\f$ as claimed.

Now, we can let $m$ tend to $+\infty$ in \eqref{est} and get
\begin{align*}
(\G(t,s_1)\bm{\one})(x)- (\G(t,s_2)\bm{\one})(x)\ge & -\int_{s_1}^{s_2}(\G(t,\sigma)(C_n(\sigma,\cdot)\bm{\one}))(x)d\sigma.
\end{align*}
Since $\G(t,s)$ is a positive operator and the sequence $(C_n\bm{\one})$ is decreasing componentwise, we can apply twice the monotone convergence theorem to pass to the limit as $n \to +\infty$ and get
\begin{align*}
(\G(t,s_1)\bm{\one})(x)- (\G(t,s_2)\bm{\one})(x)\geq & -\int_{s_1}^{s_2}(\G(t,\sigma)(C(\sigma,\cdot)\bm{\one}))(x)d\sigma.
\end{align*}
The proof is complete.
\end{proof}

\begin{hyp}\label{Lya3}
There exist a nonnegative function $\varphi\in C^2(\Rd)$, blowing up as $|x|\to +\infty$, constants $a,c>0$ and $t_0\in I$ such that
\begin{eqnarray*}
(\bm \A(t) (\varphi\bm{\one}))(x) \le (a -c\varphi(x))\bm{\one},\qquad\;\, t\ge t_0,\, x\in \Rd.
\end{eqnarray*}
\end{hyp}

\begin{rmk} {\rm Note that under Hypothesis \ref{Lya2}(ii), Hypotheses \ref{Lya2}(i) and \ref{Lya3} are both satisfied if there exists a nonnegative function $\varphi\in C^2(\Rd)$, blowing up as $|x|\to +\infty$ and constants $a,c>0$, $t_0\in I$ such that
$(\mathcal A_i(t)\varphi)(x) \le a -c\varphi(x)$ for any $t\ge t_0\in I$, $x\in \Rd$ and $i=1,\ldots,m$.}
\end{rmk}

\begin{lemm}\label{pre_lem}
Let the assumptions of Lemma $\ref{intbypa}$ and Hypothesis $\ref{Lya3}$ be satisfied. Then, the function $\G(t,s)(\varphi\bm{\one})$ is well defined for any $t_0\le s\le t\in I$. Moreover, for any fixed $x \in \Rd$, the function $(t,s)\mapsto (\G(t,s)(\varphi\bm{\one}))(x)$ is bounded in $\Lambda_0=\{(t,s)\in I\times I: t_0\le s\le t\}$ and satisfies
the inequality $(\G(t,s)(\varphi\bm{\one}))(x)\le ((\varphi+ac^{-1})\bm{\one})(x)$ for any $x\in \Rd$ and $(t,s)\in \Lambda_0$.
\end{lemm}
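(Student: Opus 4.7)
The plan is to exhibit $\G(t,s)(\varphi\bm{\one})$ as the pointwise monotone limit of a sequence $G_n(s):=\G(t,s)(\varphi_n\bm{\one})$, where $(\varphi_n)$ are bounded nonnegative $C^2$ truncations of $\varphi$ to which Lemma \ref{intbypa} applies, and then to refine the resulting integral inequality by a backward Gronwall argument so as to produce the pointwise bound with the $(t,s)$-independent constant $ac^{-1}$.

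\emph{Truncation and Lyapunov estimate.} I would pick $\alpha_n\in C^2([0,\infty))$ concave and nondecreasing with $\alpha_n(0)=0$, $\alpha_n(t)=t$ on $[0,n]$, $\alpha_n$ constant for $t$ sufficiently large, $0\le\alpha_n'\le 1$, and $\alpha_n\le\alpha_{n+1}$ pointwise, and set $\varphi_n:=\alpha_n(\varphi)$. Then $\varphi_n\in C^2_b(\Rd)$ is nonnegative and constant outside a ball (thanks to the blow-up of $\varphi$), $\varphi_n\le\varphi$, and $\varphi_n\uparrow\varphi$ pointwise. Since $\alpha_n''\le 0$ and the matrices $Q^k$ are positive definite, one gets $\mathcal{A}_k(\sigma)\varphi_n\le\alpha_n'(\varphi)\mathcal{A}_k(\sigma)\varphi$; combining this with Hypothesis \ref{Lya3}, with Hypothesis \ref{Lya2}(ii), and with the concavity inequality $\alpha_n(\varphi)\ge\alpha_n'(\varphi)\varphi$ (which holds because $\alpha_n(0)=0$ and $\varphi\ge 0$), one obtains for each $k=1,\ldots,m$ and every $\sigma\ge t_0$
\begin{equation*}
[\A(\sigma)(\varphi_n\bm{\one})]_k \le \alpha_n'(\varphi)(a-c\varphi) \le a-c\eta_n,\qquad \eta_n:=\alpha_n'(\varphi)\varphi,
\end{equation*}
where the last inequality uses $a\alpha_n'\le a$.

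\emph{Integrated inequality and passage to the limit.} Applying Lemma \ref{intbypa} to $\f=\varphi_n\bm{\one}$ with $s_0=s$, $s_1=t$, and using the bound above, positivity of $\G(t,\sigma)$ and the estimate $\G(t,\sigma)\bm{\one}\le\bm{\one}$ (which follows from Theorem \ref{maxprinc} applied to $\bm{\one}$ under Hypothesis \ref{Lya2}(ii)), one finds
\begin{equation*}
G_n(s) \le \varphi_n\bm{\one} + a(t-s)\bm{\one} - c\int_s^t \G(t,\sigma)(\eta_n\bm{\one})\,d\sigma.
\end{equation*}
Dropping the nonpositive $c$-integral and recalling that $(G_n(s))_n$ is nondecreasing (by $\varphi_n\le\varphi_{n+1}$ and positivity of $\G(t,s)$) shows that the pointwise limit $L(t,s):=\lim_n G_n(s)\le(\varphi+a(t-s))\bm{\one}$ exists and is finite, which defines $\G(t,s)(\varphi\bm{\one}):=L(t,s)$. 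Since $\eta_n\le\varphi$ and $\eta_n(x)\to\varphi(x)$ pointwise (as $\alpha_n'(\varphi(x))=1$ once $n\ge\varphi(x)$), the representation of $\G$ via the finite kernels $p_{ij}$ combined with the dominated convergence theorem (the dominating function $\varphi+a(t-s)$ is finite by the previous step) allows me to pass to the limit in the above display and conclude
\begin{equation*}
L(t,s) + c\int_s^t L(t,\sigma)\,d\sigma \le (\varphi+a(t-s))\bm{\one}.
\end{equation*}

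\emph{Backward Gronwall and conclusion.} Setting $\Psi(s):=c\int_s^t L(t,\sigma)\,d\sigma$, so that $\Psi(t)=0$ and $\Psi'(s)=-cL(t,s)$, the previous inequality reads $\Psi'-c\Psi\ge -c(\varphi+a(t-\cdot))\bm{\one}$ componentwise; multiplying by $e^{-c\sigma}$, integrating on $[s,t]$, and unwinding returns
\begin{equation*}
L_k(t,s)(x) \le \varphi(x)e^{-c(t-s)} + \frac{a}{c}\bigl(1-e^{-c(t-s)}\bigr) \le \varphi(x) + \frac{a}{c}
\end{equation*}
for each $k$, which yields both the claimed pointwise bound and the boundedness of $(t,s)\mapsto L(t,s)(x)$ on $\Lambda_0$. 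The main technical obstacle is the careful construction of the truncating functions $\alpha_n$ with all the listed properties simultaneously — in particular, $C^2$-smoothness at the corner $t=n$ together with eventual constancy, which is what makes Lemma \ref{intbypa} applicable — and the verification of the Lyapunov-type bound for $\varphi_n$; once these are in hand, the passage to the limit and the backward Gronwall step are standard.
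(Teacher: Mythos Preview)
Your overall strategy---truncate $\varphi$ by $\varphi_n=\alpha_n\circ\varphi$, apply Lemma~\ref{intbypa}, pass to the limit via the kernel representation, and finish with a Gronwall-type argument---is exactly the paper's approach, and your truncation and limit steps are carried out correctly.

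There is, however, a genuine gap in the final step. You apply Lemma~\ref{intbypa} only with $s_0=s$, $s_1=t$, obtaining after the limit
\[
L(t,s)+c\int_s^t L(t,\sigma)\,d\sigma\le\bigl(\varphi+a(t-s)\bigr)\bm{\one}.
\]
From this single-endpoint inequality the bound $L_k(t,s)(x)\le\varphi(x)e^{-c(t-s)}+\frac{a}{c}(1-e^{-c(t-s)})$ does \emph{not} follow by the manipulation you describe. Indeed, with $\Psi(s)=c\int_s^t L(t,\sigma)\,d\sigma$, the inequality $\Psi'-c\Psi\ge -c(\varphi+a(t-\cdot))$ integrated against $e^{-c\sigma}$ on $[s,t]$ yields only an \emph{upper} bound on $\Psi(s)$. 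Since $L(t,s)\le\varphi+a(t-s)-\Psi(s)$, an upper bound on $\Psi$ is useless here---you would need a lower bound. A concrete obstruction: with $\varphi(x)=0$, $a=c=1$, $t=1$, the function $h(s)=(1-s_\ast)\chi_{[s_\ast-\epsilon,s_\ast]}(s)$ (smoothed) satisfies $h(s)+\int_s^1 h\le 1-s$ for every $s$, yet $h(s_\ast)=1-s_\ast>1-e^{-(1-s_\ast)}$.

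The fix requires no new idea: apply Lemma~\ref{intbypa} with \emph{general} $s_0\le s_1\le t$ (not only $s_1=t$) and pass to the limit exactly as you did, to obtain
\[
L(t,s_0)-L(t,s_1)\le\int_{s_0}^{s_1}\bigl(a\bm{\one}-cL(t,\sigma)\bigr)\,d\sigma,\qquad t_0\le s_0\le s_1\le t.
\]
This two-parameter inequality is what the paper derives; from it a genuine comparison with the solution of $y'=a-cy$, $y(0)=\varphi(x)$ (equivalently, the paper's auxiliary function $\zeta$) gives the sharp bound and hence $L(t,s)\le(\varphi+ac^{-1})\bm{\one}$ uniformly on $\Lambda_0$.
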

\begin{proof}
First we prove that the function $\G(t,s)(\varphi\bm{\one})$ is well defined in $\Rd$ for any $t>s\ge t_0$.
To this aim, for any $n\in \N$ choose $\psi_n\in C^2([0,+\infty))$ such that
\begin{enumerate}[\rm (i)]
\item $\psi_n(x)=x$ for $x\in [0,n]$;
\item $\psi_n(x)=n+1/2$ for $x \ge n+1$;
\item $0\le \psi'_n\le 1$ and $\psi''_n \le 0$.
\end{enumerate}
Note that the previous conditions imply that  $\psi'_n(x)x\le \psi_n(x)$ for any
$x\in [0,+\infty)$. Moreover, since the functions $\varphi_n= \psi_n\circ \varphi$ belong to
$C^2_b(\Rd)$ and are constant outside a compact set, Lemma \ref{intbypa} and the nonnegativity of $\G(t,s)$ yield
\begin{align*}
\varphi_n(x)&\ge \varphi_n(x)-(\G(t,s)\varphi_n\bm{\one})_i(x)\notag\\
& \ge -\int_{s}^{t}(\G(t,\sigma)\bm\A(\sigma)\varphi_n\bm{\one})_i(x)d\sigma\\
& = -\sum_{j=1}^m\int_{s}^{t}\int_{\Rd}(\bm\A(\sigma)\varphi_n\bm{\one})_j(y)p_{ij}(t,\sigma,x,dy)d\sigma\\
& = -\sum_{j=1}^m\int_{s}^{t}\int_{\Rd}\psi'_n(\varphi(y))(\mathcal{A}_j(\sigma)\varphi)(y)p_{ij}(t,\sigma,x,dy)d\sigma\\
&\quad -\sum_{j=1}^m\int_{s}^{t}\int_{\Rd}\psi''_n(\varphi(y))\langle Q^j(\sigma,y)\nabla \varphi(y), \nabla \varphi(y)\rangle p_{ij}(t,\sigma,x,dy)d\sigma\\
& \quad -\sum_{j,k=1}^m\int_{s}^{t}\int_{\Rd}\psi_n(\varphi(y))c_{jk}(\sigma, y)p_{ij}(t,\sigma,x,dy)d\sigma
\end{align*}
for any $i=1, \ldots, m$, $t>s\in I$ and $x \in \Rd$, where $\mathcal A_j(\sigma)$ is defined in \eqref{op_A}. \\
Using Hypothesis \ref{hyp-base}(ii) and recalling that $\mathcal{A}_j(\sigma)\varphi= ({\bm \A}(\sigma)(\varphi\bm{\one}))_j-(C(\sigma,\cdot)\varphi\bm{\one})_j$ for any $j=1, \ldots, m$, we estimate
\begin{align}\label{caldo}
&\varphi_n(x)-(\G(t,s)\varphi_n\bm{\one})_i(x)\notag\\
\ge &-\sum_{j=1}^m\int_{s}^{t}\int_{\Rd}\psi'_n(\varphi(y))(\bm\A(\sigma)\varphi\bm{\one})_j(y)p_{ij}(t,\sigma,x,dy)d\sigma\notag\\
& -\sum_{j=1}^m\int_{s}^{t}\int_{\Rd}\left[\psi_n(\varphi(y))-\psi'_n(\varphi(y))\varphi(y)\right]\sum_{k=1}^mc_{jk}(\sigma,y)p_{ij}(t,\sigma,x,dy)d\sigma\notag\\
\ge  &-\sum_{j=1}^m\int_{s}^{t}\int_{\Rd}\psi'_n(\varphi(y))(\bm\A(\sigma)\varphi\bm{\one})_j(y)p_{ij}(t,\sigma,x,dy)d\sigma,
\end{align}
where in the last line we have used Hypothesis \ref{Lya2}(ii).
Now, we can split
\begin{align*}
&-\sum_{j=1}^m\int_{s}^{t}\int_{\Rd} \psi'_n(\varphi(y))(\bm\A(\sigma)\varphi\bm{\one})_j(y)p_{ij}(t,\sigma,x,dy)d\sigma\\
=& \sum_{j=1}^m\int_{s}^{t}\int_{\Rd}\psi'_n(\varphi(y))\left[a-(\bm\A(\sigma)\varphi\bm{\one})_j(y)\right]p_{ij}(t,\sigma,x,dy)d\sigma\\
& - a\sum_{j=1}^m\int_{s}^{t}\int_{\Rd}\psi'_n(\varphi(y))p_{ij}(t,\sigma,x,dy)d\sigma,
\end{align*}
where $a$ is the constant in Hypothesis \ref{Lya3}. The monotonicity of the sequence $(\psi_n'(x))$ for any $x\in\Rd$ and the monotone convergence theorem yield immediately that both integrals in the right-hand side of the previous formula converge. Thus, since $\varphi_n(x)$ converges to $\varphi(x)$ as $n\to +\infty$ for any $x \in \Rd$, taking the limit as $n \to +\infty$ in \eqref{caldo}, it follows that
$(\G(t,s)\varphi\bm{\one})(x)$ is well defined for any $t\ge s\in \Lambda$, $x\in \Rd$ and
\begin{align*}
(\G(t,s)\varphi\bm{\one})_i(x) &\le \varphi(x)+ \int_{s}^{t}(\G(t,\sigma)(\bm\A(\sigma)\varphi\bm{\one}))_i(x)d\sigma\\
& \le\varphi(x)+ \int_{s}^{t}(a-c(\G(t,\sigma)(\varphi\bm{\one}))_i(x))d\sigma\\
&\le \varphi(x)+a(t-s)
\end{align*}
for any $i=1,\ldots, m$ and $(t,s)\in \Lambda_0$, where we used the fact that $\G(t,\sigma)\1\le\1$.

To complete the proof, for any $i=1, \ldots, m$, $t>s\ge t_0$ and $x \in \Rd$ we define $g_i(s)= (\G(t,s)\varphi\bm{\one})_i(x)$. Arguing as above it can be proved that
\begin{eqnarray*}
g_i(s)-g_i(r)\le \int_r^s (a-c g_i(\sigma))d\sigma,\qquad t_0\le r \le s\le t.
\end{eqnarray*}
From this inequality it follows easily that the function $\zeta:[s,t]\to\R$, defined by
\begin{eqnarray*}
\zeta(r)=\bigg (g_i(s)-\frac{a}{c}+\int_s^r(cg_i(\sigma)-a)d\sigma\bigg )e^{-cr},\qquad\;\,r\in [s,t],
\end{eqnarray*}
is weakly differentiable and its derivative is almost everywhere nonnegative in $[s,t]$. This implies that
$\zeta(s)\le\zeta(t)$, which is the claim.
\end{proof}

\begin{rmk}
{\rm In the proof of the previous lemma, Hypothesis \ref{Lya2}(ii) has played a crucial role. It is for this reason that we needed to consider a vector-valued Lyapunov function with all the components equal each other.}
\end{rmk}

\begin{coro}\label{tight-uni}
Under the hypotheses of Lemma $\ref{pre_lem}$,
$\sup_{t>s}p_{ij}(t,s,x,\Rd\setminus B_r)$ converges to $0$, for any $i,j=1, \ldots, m$ and $s\ge t_0$ $($where $t_0$ is defined in Hypothesis $\ref{Lya3})$, as $r\to +\infty$, locally uniformly with respect to $x \in \Rd$.
\end{coro}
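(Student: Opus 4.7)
The strategy is a Chebyshev-type estimate based on the Lyapunov function $\varphi$ from Hypothesis \ref{Lya3}: the bound $(\G(t,s)(\varphi\bm{\one}))_i(x)\le \varphi(x)+ac^{-1}$ established in Lemma \ref{pre_lem} furnishes a uniform moment estimate on the kernels $p_{ij}$, and since $\varphi$ blows up at infinity this directly yields the tightness claim.

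The first step is to upgrade the integral representation \eqref{int_rep_1} from $C_b(\Rd;\Rm)$ to the unbounded function $\varphi\bm{\one}$. Using the truncations $\varphi_n=\psi_n\circ\varphi\in C_b^2(\Rd)$ introduced in the proof of Lemma \ref{pre_lem}, each $\varphi_n\bm{\one}$ lies in $C_b(\Rd;\Rm)$ and $\varphi_n\uparrow\varphi$ pointwise on $\Rd$. Formula \eqref{int_rep_1} applied to $\varphi_n\bm{\one}$ yields
\begin{eqnarray*}
(\G(t,s)(\varphi_n\bm{\one}))_i(x)=\sum_{k=1}^m\int_{\Rd}\varphi_n(y)p_{ik}(t,s,x,dy).
\end{eqnarray*}
Since Hypothesis \ref{allpos} is in force among the assumptions of Lemma \ref{pre_lem}, Proposition \ref{pro} guarantees that each $p_{ik}(t,s,x,\cdot)$ is a nonnegative measure. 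Hence, by monotone convergence, the right-hand side tends to $\sum_{k=1}^m\int_{\Rd}\varphi(y)p_{ik}(t,s,x,dy)$ as $n\to +\infty$. Combined with the pointwise convergence $(\G(t,s)(\varphi_n\bm{\one}))_i(x)\to (\G(t,s)(\varphi\bm{\one}))_i(x)$ that was already used in the proof of Lemma \ref{pre_lem}, this gives
\begin{eqnarray*}
(\G(t,s)(\varphi\bm{\one}))_i(x)=\sum_{k=1}^m\int_{\Rd}\varphi(y)p_{ik}(t,s,x,dy),\qquad (t,s)\in\Lambda_0,\ x\in\Rd.
\end{eqnarray*}

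With the representation in hand, the conclusion is immediate. Fix $i,j\in\{1,\ldots,m\}$, $s\ge t_0$ and $x\in\Rd$. Since all summands are nonnegative, Lemma \ref{pre_lem} yields
\begin{eqnarray*}
\int_{\Rd\setminus B_r}\varphi(y)\,p_{ij}(t,s,x,dy)\le \sum_{k=1}^m\int_{\Rd}\varphi(y)\,p_{ik}(t,s,x,dy)=(\G(t,s)(\varphi\bm{\one}))_i(x)\le\varphi(x)+\frac{a}{c}
\end{eqnarray*}
for every $t>s$. Setting $m_r:=\inf_{|y|\ge r}\varphi(y)$, one has $m_r\to +\infty$ as $r\to +\infty$ because $\varphi$ blows up at infinity; using $\varphi\ge m_r$ on $\Rd\setminus B_r$ we obtain
\begin{eqnarray*}
\sup_{t>s}p_{ij}(t,s,x,\Rd\setminus B_r)\le \frac{\varphi(x)+ac^{-1}}{m_r}.
\end{eqnarray*}
Since $\varphi$ is continuous on $\Rd$, the right-hand side is locally bounded in $x$ and tends to $0$ as $r\to +\infty$ locally uniformly with respect to $x$, which is the assertion. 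The only subtle point is the extension of the integral representation to the unbounded weight $\varphi\bm{\one}$; once this is established the rest is the standard Markov-type tail bound.
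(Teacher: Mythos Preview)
Your proof is correct and follows essentially the same Chebyshev-type argument as the paper: both use the positivity of the kernels to bound $p_{ij}(t,s,x,\Rd\setminus B_r)$ by $(\inf_{\Rd\setminus B_r}\varphi)^{-1}(\G(t,s)(\varphi\bm{\one}))_i(x)$ and then invoke the estimate $(\G(t,s)(\varphi\bm{\one}))_i(x)\le\varphi(x)+ac^{-1}$ from Lemma~\ref{pre_lem}. Your additional care in justifying the integral representation of $(\G(t,s)(\varphi\bm{\one}))_i$ via monotone convergence with the truncations $\varphi_n$ makes explicit a step the paper leaves implicit (it was effectively established within the proof of Lemma~\ref{pre_lem}).
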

\begin{proof}
The proof of this result is quite standard. However for the sake of completeness we provide a sketch of it.
Taking into account the positivity of the transition kernels, it holds that
\begin{align}\label{loc_comp}
p_{ij}(t,s,x,\Rd\setminus B_r)=& \int_{\Rd\setminus B_r} p_{ij}(t,s,x,dy)\le \frac{1}{\inf_{\Rd\setminus B_r}\varphi}\int_{\Rd\setminus B_r}\varphi p_{ij}(t,s,x,dy)\notag\\
\le &  \frac{1}{\inf_{\Rd\setminus B_r}\varphi}(\G(t,s)\varphi \bm{\one})_i(x)\le \frac{1}{\inf_{\Rd\setminus B_r}\varphi}(\varphi(x)+a c^{-1})
\end{align}
for any $i,j=1, \ldots, m$.
The claim follows since $\varphi$ blows up as $|x|\to +\infty$.
\end{proof}

Now we prove the first compactness result for the evolution operator $\G(t,s)$. Note that this result improves that in Theorem \ref{teo}(ii).
Indeed here we gain an uniform convergence in time of $\G(\cdot, s)\f_n$ to $\G(\cdot,s)\f$ as $n\to +\infty$ when $(\f_n)$ is a sequence approaching $\f$ locally uniformly in $\Rd$.

\begin{thm}\label{thm_comp1}
Assume that Hypotheses $\ref{hyp-base}(i)$-$(iii)$, $\ref{allpos}$, $\ref{Lya2}$ and $\ref{Lya3}$ hold true and let $(\f_n)\subset C_b(\Rd;\Rm)$ be a bounded sequence converging locally uniformly in $\Rd$ to $\f$, as $n\to+\infty$. Then, for any $s\ge t_0$ $($where $t_0$ is defined in Hypothesis $\ref{Lya3})$ $\G(\cdot,s)\f_n$ converges uniformly to $\G(\cdot,s)\f$ in $(s,+\infty)\times B_r$ for any $r>0$, as $n \to +\infty$. In general, for any sequence $(\f_n)\subset C_b(\Rd;\R^m)$, there exists a subsequence $(\f_{n_k})$ such that $\G(\cdot,s)\f_{n_k}$ converges uniformly in $(t_0,+\infty)\times B_r$ for every $r>0$.
\end{thm}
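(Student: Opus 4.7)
The approach would combine the uniform-in-$t$ tightness of the transition kernels (Corollary \ref{tight-uni}) with the integral representation \eqref{int_rep_1}. The first part is proved by a direct splitting over $B_R$ and its complement, while the second part is reduced to the first via interior Schauder estimates, a diagonal extraction and the evolution law $\G(t,s)=\G(t,t_0)\G(t_0,s)$.

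\emph{Step 1: local uniform convergence case.} I would set $\g_n := \f_n - \f$, so that $\g_n\to\bm{0}$ locally uniformly and $M := \sup_n \|\g_n\|_\infty < +\infty$. Fixing $r>0$ and $\varepsilon>0$ and using that $s\ge t_0$, Corollary \ref{tight-uni} produces $R>0$ with $\sup_{t>s,\,x\in B_r}p_{ki}(t,s,x,\Rd\setminus B_R)<\varepsilon$ for every $k,i$. By local uniform convergence, I choose $N$ so that $|g_{n,i}(y)|<\varepsilon$ on $B_R$ for every $n\ge N$ and every $i$. Exploiting \eqref{int_rep_1} and the bound $p_{ki}(t,s,x,\Rd)\le(\G(t,s)\bm{\one})_k(x)\le 1$ (which stems from Step 1 of Theorem \ref{bbthe} together with Hypothesis \ref{Lya2}(ii)), splitting the integral over $B_R$ and $\Rd\setminus B_R$ yields $|(\G(t,s)\g_n)_k(x)|\le m(1+M)\varepsilon$ for $n\ge N$, uniformly in $(t,x)\in(s,+\infty)\times B_r$. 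This establishes the first assertion.

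\emph{Step 2: general bounded sequence.} The idea is to precompose with $\G(t_0,s)$ in order to regularize the data. Set $\h_n := \G(t_0,s)\f_n$; we may assume $s\le t_0$, the opposite case being handled analogously with $s$ in place of $t_0$. By the interior Schauder estimates (Theorem \ref{int:ext}) applied to the uniformly bounded sequence $\G(\cdot,s)\f_n$ on a parabolic neighbourhood of $\{t_0\}\times\overline{B_k}$, the sequence $(\h_n)$ is bounded in $C^{2+\alpha}(\overline{B_k};\Rm)$ for every $k\in\N$. A diagonal Ascoli--Arzel\`a extraction then produces a subsequence $(\h_{n_j})$ converging locally uniformly in $\Rd$ to some bounded continuous $\h$. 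Applying Step 1 with initial time $t_0$ to this convergent sequence, I obtain $\G(\cdot,t_0)\h_{n_j}\to\G(\cdot,t_0)\h$ uniformly on $(t_0,+\infty)\times B_r$. The evolution-operator law $\G(t,t_0)\h_{n_j}=\G(t,t_0)\G(t_0,s)\f_{n_j}=\G(t,s)\f_{n_j}$ (valid by the uniqueness in Theorem \ref{bbthe}) completes the argument.

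\emph{Main obstacle.} The decisive ingredient is the uniform-in-$t$ tightness of Corollary \ref{tight-uni}, which itself rests on the pointwise estimate $(\G(t,s)(\varphi\bm{\one}))(x)\le(\varphi(x)+a/c)\bm{\one}$ from Lemma \ref{pre_lem}; this in turn requires both the Lyapunov function of Hypothesis \ref{Lya3} and the row-sum condition of Hypothesis \ref{Lya2}(ii) in order to control the sign of the correction term arising from the potential $C$. Once tightness is secured, the splitting in Step 1 and the Schauder/Ascoli bootstrap in Step 2 are routine; a minor care is needed to invoke the evolution law, which follows from the uniqueness part of Theorem \ref{bbthe}.
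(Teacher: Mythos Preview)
Your proposal is correct and follows essentially the same approach as the paper: the tightness estimate of Corollary \ref{tight-uni} combined with the integral representation \eqref{int_rep_1} and the contractivity $\sum_j p_{ij}\le 1$ for the first assertion, then interior Schauder estimates, Ascoli--Arzel\`a extraction, and the evolution law for the second. The only minor imprecision is the phrase ``with $s$ in place of $t_0$'' in Step~2 (since $\G(s,s)$ does not regularize); one simply picks any intermediate time $\tau>\max\{s,t_0\}$, which is exactly what the paper does (there overloading the symbol $t_0$).
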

\begin{proof}
Let $(\f_n)$ be a sequence as in the first part of the statement and assume that $\sup_{n\in\N}\|\f_n\|_{\infty}\le M$. Let $t>s\ge t_0$ and $x \in B_k$ for some $k\in\N$. Then, for any $i=1, \ldots, m$ we can estimate
\begin{align}\label{condi}
|(\G(t,s)(\f_n-\f))_i(x)|&\le \sum_{j=1}^m\int_{B_r}|f_{n,j}(y)-f_j(y)|p_{ij}(t,s,x,dy)\notag\\
&+\sum_{j=1}^m\int_{\Rd\setminus B_r}|f_{n,j}(y)-f_j(y)|p_{ij}(t,s,x,dy)\notag\\
& \le \|\f_n-\f\|_{C_b(B_r;\Rd)}\sum_{j=1}^mp_{ij}(t,s,x,B_r)\notag\\
&+2M\sum_{j=1}^m\sup_{t>s}\sup_{x\in B_k}p_{ij}(t,s,x,\Rd\setminus B_r)
\end{align}
for every $r>0$ and $n\in\N$. Since $\sum_{j=1}^mp_{ij}(t,s,\cdot,B_r)=(\G(t,s)\chi_{B_r}\bm{\one})_i$, by estimate \eqref{stimaunif} it follows that
$\sup_{x\in \Rd}\sum_{j=1}^mp_{ij}(t,s,x,B_r)\le 1$ for any $t>s$ and $r>0$. Thus, letting $n$ tend to $+\infty$ in \eqref{condi} we obtain that
\begin{eqnarray*}
\limsup_{n\to +\infty}\|(\G(\cdot,s)(\f_n-\f))_i\|_{C_b((s,+\infty)\times B_k;\Rm)}\le 2M\sum_{j=1}^m\sup_{t>s}\sup_{x\in B_k}p_{ij}(t,s,x,\Rd\setminus B_r)
\end{eqnarray*}
for every $r>0$. Finally, letting $r$ tend to $+\infty$ and using Corollary \ref{tight-uni} we conclude that
\begin{eqnarray*}
\limsup_{n\to +\infty}\|(\G(\cdot,s)(\f_n-\f))_i\|_{C_b((s,+\infty)\times B_k;\Rm)}\le 0
\end{eqnarray*}
 and the first part of the claim is so proved.

To conclude, let us consider a sequence $(\f_n)\subset C_b(\Rd;\R^m)$ for any $n\in \N$ and $r\in I$. The Schauder estimates \eqref{Schauder} and estimate \eqref{stimaunif} yield that, for any fixed  $t_0>s$, the sequence $(\G(t_0,s)\f_n)$
is bounded in $C^{2+\alpha}(B_r;\Rm)$ for any $r>0$. Then, up to subsequences, it converges locally uniformly in $\Rd$ to some function $\g\in C_b(\Rd;\Rm)$.  Thus,
since $|\G(t,s)\f_{n_k}-\G(t,t_0)\g|=|\G(t,t_0)\left(\G(t_0,s)\f_{n_k}-\g\right)|$
in $\Rd$ for every $t>t_0>s$ and $k\in\N$, applying the first part of the claim to the sequence $(\G(t_0,s)\f_{n_k}-\g)_k$ we conclude the proof.
\end{proof}

Now, we are interested in finding conditions that ensure that, for any bounded interval $J\subset I$ and any fixed $(t,s)\in \Lambda_J$ the operator $\G(t,s)$ is compact in $C_b(\Rd;\Rm)$. First of all, let observe that the compactness of $\G(t,s)$ in $C_b(\Rd;\Rm)$ is equivalent to the tightness of the measures $\{p_{ij}(t,s,x,\cdot): x\in \Rd\}$, $i,j=1, \ldots,m$ (see formula \eqref{int_rep_1}), as the next proposition states.

\begin{prop}\label{equi-tight}
Let $J\subset I$ be a bounded interval and $(t,s)\in \Lambda_J$. The evolution operator $\G(t,s)$ is compact in $C_b(\Rd;\Rm)$  if and only if the measures $\{p_{ij}(t,s,x, \cdot): x\in \Rd\}$ are tight for any $i,j=1, \ldots, m$, i.e., for any $\varepsilon>0$ there exists $r>0$ such that  $\sup_{x \in \Rd}p_{ij}(t,s,x,\Rd\setminus B_r)<\varepsilon$ for any $i,j=1, \ldots, m$.
\end{prop}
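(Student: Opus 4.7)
My plan is to prove the equivalence by establishing the two implications separately, with the reverse direction being the more delicate one.

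\textbf{Compactness implies tightness.} I argue by contradiction. If tightness failed for some pair of indices $i,j$, there would exist $\varepsilon_{0}>0$, a sequence $r_{n}\uparrow+\infty$ and points $x_{n}\in\R^{d}$ with $p_{ij}(t,s,x_{n},\R^{d}\setminus B_{r_{n}})\geq\varepsilon_{0}$. Choose smooth cut-offs $\psi_{n}\in C_{b}(\R^{d};[0,1])$ with $\chi_{\R^{d}\setminus B_{r_{n}}}\leq\psi_{n}\leq\chi_{\R^{d}\setminus B_{r_{n}-1}}$ and set $\f_{n}:=\psi_{n}\e_{j}\in C_{b}(\R^{d};\R^{m})$, a sequence of sup-norm at most one. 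By compactness of $\G(t,s)$, some subsequence $(\G(t,s)\f_{n_{k}})$ converges uniformly to a limit $\g$. Since $\psi_{n}\to 0$ pointwise and each $p_{ij}(t,s,x,\cdot)$ is a finite Borel measure (Theorem \ref{teo}), dominated convergence applied to the integral representation \eqref{int_rep_1} yields $(\G(t,s)\f_{n})_{i}(x)\to 0$ for every $x\in\R^{d}$, forcing $\g\equiv\bm 0$. On the other hand $(\G(t,s)\f_{n_k})_{i}(x_{n_k})\geq p_{ij}(t,s,x_{n_k},\R^{d}\setminus B_{r_{n_k}})\geq\varepsilon_{0}$, contradicting $\|\G(t,s)\f_{n_{k}}\|_{\infty}\to 0$.

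\textbf{Tightness implies compactness.} Let $(\f_{n})\subset C_{b}(\R^{d};\R^{m})$ be bounded with $\|\f_{n}\|_{\infty}\leq M$. The bound \eqref{moto} gives a uniform sup-norm estimate on $(\G(t,s)\f_{n})$, and the interior Schauder estimates from Theorem \ref{int:ext} produce uniform $C^{2+\alpha}$ bounds on every compact subset of $\R^{d}$. A diagonal Ascoli--Arzelà extraction supplies a subsequence $(\G(t,s)\f_{n_{k}})$ converging in $C^{2}_{\mathrm{loc}}(\R^{d};\R^{m})$ to some $\h\in C_{b}(\R^{d};\R^{m})$. To upgrade this to uniform convergence, fix $\varepsilon>0$; by tightness choose $r>0$ with $\sup_{x,i,j}p_{ij}(t,s,x,\R^{d}\setminus B_{r})<\varepsilon/(4mM)$ and a smooth cut-off $\eta\in C^{\infty}_{c}(\R^{d})$ with $\chi_{B_{r}}\leq\eta\leq\chi_{B_{r+1}}$. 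Splitting $\f_{n_{k}}=\eta\f_{n_{k}}+(1-\eta)\f_{n_{k}}$, the representation \eqref{int_rep_1} gives $\|\G(t,s)((1-\eta)\f_{n_{k}})\|_{\infty}<\varepsilon/4$ uniformly in $k$. The remaining piece $\G(t,s)(\eta\f_{n_{k}})$ has initial data supported in the fixed ball $\overline{B_{r+1}}$, and by combining the already-established local uniform convergence on a sufficiently large ball $B_{R}$ (which handles the values for $|x|\leq R$) with a further use of tightness of the measures $p_{ij}(t,s,x,\cdot)$ (which for $|x|>R$ forces the integrals in \eqref{int_rep_1} to be uniformly close to those controlled on $B_{R}$), one extracts a further subsequence along which $\G(t,s)(\eta\f_{n_{k}})$ is Cauchy in the sup norm. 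Letting $\varepsilon\to 0$ completes the proof.

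\textbf{Main obstacle.} The principal difficulty lies in promoting the local uniform convergence of $(\G(t,s)\f_{n_{k}})$ to genuine uniform convergence on $\R^{d}$. In the same-principal-part setting of \cite{AALT}, this step was handled by the domination $|\T(t)\f|^{2}\leq e^{K(t-s)}T(t)|\f|^{2}$, which reduced the question to the scalar case; that tool is unavailable here because the diffusion and drift coefficients vary across the equations. We therefore must exploit the uniform tightness of the kernels directly, together with the strong-Feller representation \eqref{int_rep_1} and the interior Schauder estimates, in order to rule out oscillations of $\G(t,s)\f_{n_{k}}$ at spatial infinity.
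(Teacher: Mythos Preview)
Your forward direction (compactness $\Rightarrow$ tightness) is correct and essentially matches what one finds in \cite{AALT}: the contradiction argument via the cut-off sequence $\f_n=\psi_n\e_j$ is standard, and you correctly use the nonnegativity of the kernels (this is exactly the point the paper singles out when it says ``recalling that the measures $p_{ij}(t,s,x,\cdot)$ are nonnegative'') to obtain the lower bound $(\G(t,s)\f_{n_k})_i(x_{n_k})\ge p_{ij}(t,s,x_{n_k},\Rd\setminus B_{r_{n_k}})$.

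The reverse direction has a genuine gap at the step you yourself flag as the main obstacle. You correctly extract a locally uniformly convergent subsequence and correctly bound $\|\G(t,s)((1-\eta)\f_{n_k})\|_\infty$ via tightness. The problem is your treatment of $\G(t,s)(\eta\f_{n_k})$ for $|x|>R$. You claim that ``a further use of tightness \dots\ forces the integrals in \eqref{int_rep_1} to be uniformly close to those controlled on $B_R$'', but tightness says nothing of the sort: it bounds the mass of $p_{ij}(t,s,x,\cdot)$ outside a large ball \emph{uniformly in $x$}; it does not relate the values $(\G(t,s)\g)(x)$ for $|x|>R$ to those for $|x|\le R$. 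Concretely, the quantity you must control is
\[
(\G(t,s)(\eta(\f_{n_k}-\f_{n_{k'}})))_i(x)=\sum_{l=1}^m\int_{B_{r+1}}\eta(y)\,(f_{n_k,l}-f_{n_{k'},l})(y)\,p_{il}(t,s,x,dy),
\]
which depends on $(\f_{n_k}-\f_{n_{k'}})|_{B_{r+1}}$. You have no control over this difference: the $\f_n$ are merely a bounded sequence and need not converge anywhere. Neither the compact support of the initial data nor the tightness of the kernels makes this integral small for $x$ at infinity, and your sketch supplies no mechanism that does.

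The paper's own proof gives no details here either; it simply refers to \cite[Theorem~4.1]{AALT}. The argument there does not proceed by splitting the \emph{initial datum} as you do, but rather hinges on inserting an intermediate time $\tau\in(s,t)$ and using the evolution law $\G(t,s)=\G(t,\tau)\G(\tau,s)$: the Schauder estimates force $\g_k:=\G(\tau,s)\f_{n_k}$ to converge locally uniformly along a subsequence to some $\g\in C_b(\Rd;\Rm)$, and one then shows $\G(t,\tau)\g_k\to\G(t,\tau)\g$ \emph{uniformly} by splitting the integral in $y$ (not the datum) and invoking tightness. This is precisely the step your ``Main obstacle'' paragraph identifies but does not carry out; as written, your sketch substitutes a property that tightness does not have.
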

\begin{proof}
The proof follows adapting the arguments in \cite[Theorem 4.1 ]{AALT}, recalling that the measures $p_{ij}(t,s,x,\cdot)$ are nonnegative for any $t>s\in I$, $x\in \Rd$ and $i,j=1, \ldots,m$.
\end{proof}

Differently from the case considered in \cite{AALT} where a domination of $\G(t,s)$ in terms of a scalar semigroup reduces the problem of finding conditions that ensure the tightness of the measures $p_{ij}(t,s,x,\cdot)$ to the same problem for the kernel associated to the scalar semigroup in $C_b(\Rd)$, here we argue directly with the vector valued operator $\G(t,s)$. To this aim we need to strengthen Hypothesis \ref{Lya3} as follows.

\begin{hyp}\label{hyp_comp_2}
There exist $R>0$,  $I\ni d_1<d_2$ and
\begin{enumerate}[\rm(i)]
\item a positive function $\varphi \in C^2(\Rd)$, blowing up as $|x|\to +\infty$, and $m$-convex functions
$h_i:[0,+\infty)\to\R$, $i=1, \ldots, m$, with $1/h_i\in L^1((M,+\infty))$ for some positive $M$ such that $(\A(t)\varphi\bm{\one})_i(x)\le -h_i(\varphi(x))$ for any
$t\in [d_1,d_2]$, $x\in \Rd\setminus B_R$ and $i=1, \ldots, m$;
\item
bounded functions $w_k\in C^2(\Rd\setminus B_R)$ $(k=1,\ldots, m)$, with $\inf_{x\in \Rd\setminus B_R}w_k(x)>0$ such that $((\mathcal{A}_k(t)+c_{kk}(t,\cdot))w_k)(x)-\mu w_k(x)\ge 0$ for any $(t,x)\in [d_1,d_2]\times (\Rd\setminus B_R)$, $k=1,\ldots, m$ and some $\mu \in \R$.
\end{enumerate}
\end{hyp}

\begin{thm}\label{thm_comp_2}
Assume that Hypotheses $\ref{hyp-base}(i)$-$(iii)$ and $(v)$, $\ref{allpos}$ and $\ref{hyp_comp_2}$ hold true. Then $\G(t,s)$ is compact in $C_b(\Rd;\Rm)$ for any $(t,s)\in \Lambda_I$ with $s\le d_2$ and $t\ge d_1$.
\end{thm}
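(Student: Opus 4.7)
The plan is to invoke Proposition~\ref{equi-tight}, which identifies compactness of $\G(t,s)$ on $C_b(\Rd;\Rm)$ with uniform tightness of the measures $\{p_{ij}(t,s,x,\cdot):x\in\Rd\}$. As in the proof of Corollary~\ref{tight-uni}, the Chebyshev-type bound
\[
p_{ij}(t,s,x,\Rd\setminus B_r)\le \frac{(\G(t,s)\varphi\bm{\one})_i(x)}{\inf_{\Rd\setminus B_r}\varphi}
\]
then reduces everything to controlling $(\G(t,s)\varphi\bm{\one})_i(x)$ uniformly in $x\in\Rd$. This is a genuine strengthening of Lemma~\ref{pre_lem}, whose bound depends on $\varphi(x)$ and so is not uniform in $x$.

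I would first restrict to the range $(t,s)\in[d_1,d_2]^2$ where Hypothesis~\ref{hyp_comp_2} is in force; for a general pair with $s\le d_2$ and $t\ge d_1$, the factorisation
\[
\G(t,s)=\G(t,\tau_2)\,\G(\tau_2,\tau_1)\,\G(\tau_1,s),\qquad \tau_1=\max(s,d_1),\ \tau_2=\min(t,d_2),
\]
together with the boundedness of $\G$ on $C_b(\Rd;\Rm)$ granted by \eqref{moto}, reduces the general statement to the restricted one since the composition of a compact operator with bounded ones is still compact. Inside the square $[d_1,d_2]^2$, the heart of the argument is to construct, for each fixed $t>s$, a vector-valued parabolic supersolution $\bm V=(V_1,\ldots,V_m)$ of $\partial_\tau\uu=\A(\tau)\uu$ on $(s,t]\times(\Rd\setminus B_R)$ that is uniformly bounded in $x$, satisfies $V_i(\tau,\cdot)\ge (\G(\tau,s)\varphi\bm{\one})_i$ on $\partial B_R$, and formally $V_i(s^+,\cdot)=+\infty$. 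The natural ansatz is
\[
V_i(\tau,x)=z(\tau-s)+K\,w_i(x),
\]
where $z$ solves the scalar ODE $z'=-h(z)$ with $h=\min_i h_i$ and $z(0^+)=+\infty$, so that the integrability $1/h_i\in L^1((M,+\infty))$ guarantees $z(\tau)<+\infty$ for every $\tau>0$ on a maximal existence interval. The constant $K$ is chosen large enough that $K\inf_{\Rd\setminus B_R}w_i$ dominates the supremum of $(\G(\tau,s)\varphi\bm{\one})_i$ over the compact set $[s,t]\times\partial B_R$, which is finite by Lemma~\ref{pre_lem}. Once $\bm V$ is seen to be a supersolution, the vector-valued maximum principle of Theorem~\ref{maxprinc}, applied on exhausting annuli $B_n\setminus\overline B_R$ and passed to the limit $n\to+\infty$, yields $(\G(t,s)\varphi\bm{\one})_i\le V_i$ on $\Rd\setminus B_R$, hence the desired uniform bound.

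The main obstacle is the verification of the supersolution property $\partial_\tau V_i-(\A(\tau)\bm V)_i\ge 0$ in the coupled setting. A direct computation produces four contributions: the scalar dissipation $-h(z(\tau-s))$; the term $-K\mathcal A_i w_i$, controlled via Hypothesis~\ref{hyp_comp_2}(ii) through $\mathcal A_i w_i\ge (\mu-c_{ii})w_i$; the cross-coupling $-K\sum_{j\ne i}c_{ij}w_j$, whose sign is favourable thanks to Hypothesis~\ref{allpos}; and the genuinely delicate term $-z(\tau-s)\sum_j c_{ij}(\tau,x)$, which by Hypothesis~\ref{hyp-base}(v) is at worst $-M_0 z(\tau-s)$. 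Since the inequality $-h(z)-M_0 z\ge 0$ need not hold, the ansatz must be refined: for instance by multiplying the time factor by $e^{M_0(\tau-s)}$, equivalently replacing $h(z)$ by $\tilde h(z)=h(z)-M_0 z$ (still a convex nonlinearity with reciprocal integrable at $+\infty$, provided $h$ grows superlinearly, as is the case for convex $h$ with $1/h$ integrable), and possibly introducing positive weights $V_i=z(\tau-s)+K\alpha_iw_i(x)$ with $(\alpha_i)$ a Perron-type eigenvector tuned to the off-diagonal coupling. This algebraic rearrangement and the check that the modified $\tilde h$ still enjoys $1/\tilde h\in L^1$ at infinity is the main technical subtlety; the remaining maximum-principle work—initial datum at $\tau=s^+$, boundary on $\partial B_R$, and exhaustion as $n\to+\infty$—follows the pattern of Theorem~\ref{maxprinc} and Lemma~\ref{pre_lem}.
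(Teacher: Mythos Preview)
Your reduction to uniform tightness via Proposition~\ref{equi-tight}, the Chebyshev bound, and the factorisation through $[d_1,d_2]$ are all correct and match the paper. The gap is in the supersolution construction, where two sign errors make the argument break down.

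First, the cross-coupling term $-K\sum_{j\ne i}c_{ij}w_j$ is \emph{not} favourable: under Hypothesis~\ref{allpos} the off-diagonal entries satisfy $c_{ij}\ge 0$, and the $w_j$ are strictly positive, so this term is $\le 0$, which works \emph{against} the supersolution inequality $\partial_\tau V_i-(\A(\tau)\bm V)_i\ge 0$. Since the $c_{ij}$ may be unbounded in $x$, this defect cannot be absorbed by adjusting constants or by the Perron-type reweighting you suggest at the end. Second, Hypothesis~\ref{hyp_comp_2}(ii) gives $(\mathcal A_i+c_{ii})w_i\ge \mu w_i$, i.e.\ a \emph{lower} bound on $(\mathcal A_i+c_{ii})w_i$; this yields only an \emph{upper} bound on $-K(\mathcal A_i+c_{ii})w_i$, again the wrong direction. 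In short, the $w_k$ are subsolution-type objects for the scalar operators $\mathcal A_k+c_{kk}$, not supersolution ingredients for the coupled system.

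The paper uses the $w_k$ for a completely different purpose: via the scalar result \cite[Proposition~4.3]{AngLor10Com} they yield a uniform \emph{lower} bound $(\G(t,s)\bm\one)_i(x)\ge c_0>0$ on $[d_1,d_2]$. This lower bound is what makes the key step work: writing $\mu_i(t,s,x,dy)=\sum_j p_{ij}(t,s,x,dy)$ and applying Jensen's inequality to the convex $h_i$ with respect to the normalised measure $\mu_i/\mu_i(\Rd)$ gives
\[
h_i\big((\G(t,s)\varphi\bm\one)_i(x)\big)\le c_0^{-1}(\G(t,s)(h_i\circ\varphi)\bm\one)_i(x).
\]
Combined with the integral inequality from (the argument of) Lemma~\ref{pre_lem} and Hypothesis~\ref{hyp_comp_2}(i), this produces, for $\beta_i(\sigma)=(\G(t,t-\sigma)\varphi\bm\one)_i(x)$, the differential inequality $\beta_i(b)-\beta_i(0)\le -c_0\int_0^b h_i(\beta_i(\sigma))\,d\sigma$. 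An ODE comparison with $y'=-c_0 h_i(y)$, $y(0)=\varphi(x)$, together with $1/h_i\in L^1((M,+\infty))$, then gives the bound on $(\G(t,s)\varphi\bm\one)_i$ that is uniform in $x$ for $t-s\ge\delta$. So the convexity of $h_i$ and the lower bound on $\G(t,s)\bm\one$ are the two genuinely new ingredients you are missing; your ODE $z'=-h(z)$ is morally the right object, but it enters through Jensen rather than through a barrier.
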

\begin{proof}
Due to its length we divide the proof into three steps.\\
{\em Step 1.} Here, we prove that for any $s_0, t_0 \in [d_1,d_2]$ with $s_0<t_0$, there exists a positive constant $c_{0}$ such that
\begin{equation}\label{bfb}
(\G(t,s)\bm{\one})_k(x)\ge c_0, \qquad\;\,s_0\le s\le t\le t_0,\,\,x \in \Rd,\,\,k=1,\ldots,m.
\end{equation}
Let us fix $s_0, t_0$ as above and observe that, under our assumptions, \cite[Proposition 4.3]{AngLor10Com} can be applied and implies that there exists a positive constant $c_0$
such that $(G_k(t,s)\one)(x)\ge c_0$ for any $s_0\le s\le t\le t_0$, $x \in \Rd$ and $k=1,\ldots,m$. Here, $G_k(t,s)$ denotes the positive evolution operator associated to $\mathcal{A}_k(t)+c_{kk}(t,\cdot)$ in $C_b(\Rd;\Rm)$.
In order to prove \eqref{bfb} it suffices to prove that $(\G(t,s)\bm{\one})_k\ge G_k(t,s)\one$ for any $k=1, \ldots, m$ and $t\ge s \in I$. For this purpose we observe that, for any non positive initial datum $\f\in C_b(\Rd;\Rm)$, the function $w_k(t,x)=(\G(t,s)\f)_k- G_k(t,s)f_k$ vanishes at $t=s$ and satisfies the inequality
\begin{eqnarray*}
D_t w_k(t,\cdot)-(\mathcal{A}_k(t)+c_{kk}(t,\cdot))w_k(t,\cdot)= \sum_{i\neq k}c_{ki}(\G(t,s)\f)_i\le 0
\end{eqnarray*}
for any $t>s \in I$, where in the last inequality we have used the positivity of $\G(t,s)$ and Hypothesis \ref{allpos}. Thanks to Hypothesis \ref{hyp-base}(v), the functions $c_{kk}$ are bounded from above in $I\times \Rd$, hence a variant of the classical maximum principle (see \cite[Proposition 2.2]{AngLor10Com}) yields that $w_k$ is non positive in $I\times \Rd$. As a by product, taking $\f=-\bm{\one}$ in the definition of $w_k$, the claim follows.

{\em Step 2.} Here, we prove that for any $\delta\in (0,d_2-d_1)$ there exists a positive constant $K_{\delta}$ such that $(\G(t,s)(\varphi\bm{\one}))\le K_\delta\bm{\one}$ in $\Rd$ for any $(t,s)\in \Lambda_{[d_1,d_2]}$ with $t\ge s+\delta$.\\
Clearly, it suffices to prove the claim for $x$ outside a large enough ball. In view of this, we observe that since $h(x)\ge \tilde cx-\tilde a$ outside a suitable ball, for some positive constants $\tilde a$ and $\tilde c$, the arguments in Lemma \ref{pre_lem} can be applied to the function $\varphi$ and imply that $(\G(t,s)\varphi \bm{\one})(x)$ is well defined and
\begin{equation}\label{esse3}
(\G(t,s)\varphi(\bm{\one}))(x)- (\G(t,r)(\varphi\bm{\one}))(x)\ge -\int_s^t (\G(t,\sigma)(\A(\sigma)\varphi\bm{\one}))(x)d\sigma
\end{equation}
for any $r \le s\le t$ and $x\in \Rd$.
Now, let us fix $i\in\{1, \ldots, m\}$ and set $\mu_i(t,s,x,dy)=\sum_{j=1}^m p_{ij}(t,s,x,dy)$. Jensen inequality for
Borel finite measures and Step 1 yield that
\begin{align}\label{piove}
h_i\left((\G(t,s)(\varphi\bm{\one}))_i(x)\right)&=h_i\left(\int_{\Rd} \varphi(y)\mu_i(t,s,x,dy)\right)\notag\\
&\le \frac{1}{\mu_i(t,s,x,\Rd)}\int_{\Rd}h_i(\varphi(y))\mu_i(t,s,x,dy)\notag\\
&=\frac{1}{\mu_i(t,s,x,\Rd)}(\G(t,s)(h_i(\varphi)\bm{\one}))_i(x)\notag\\
& \le c_0^{-1}(\G(t,s)(h_i(\varphi)\bm{\one}))_i(x)
\end{align}
for any $d_1\le s\le t\le d_2$ and $x\in \Rd$, where in the last line we used equality $\mu_i(t,s,x,\Rd)=(\G(t,s)\bm{\one})_i(x)$ and estimate \eqref{bfb}.
Now, let us fix $x \in \Rd$, $t\in [d_1,d_2]$ and consider the functions $\beta_i:[0,t-\inf I)\to [0,+\infty)$ defined by
$\beta_i(\sigma)= (\G(t,t-\sigma)(\varphi{\bm \one}))_i(x)$,
for any $\sigma \in [0,t-\inf I)$. Then, from \eqref{esse3}, using also Hypothesis \ref{Lya3} and \eqref{piove}, we deduce that
\begin{align}
\beta_i (b)-\beta_i(0)&\le -\int_{t-b}^t (\G(t,\sigma)(h_i\circ\varphi))_i(x)d\sigma\notag\\
& \le -c_0 \int_{t-b}^t h_i((\G(t,\sigma)(\varphi))_i(x))d\sigma= -c_0\int_0^b h_i(\beta_i(\sigma))d\sigma,
\label{star-2}
\end{align}
where $b:=t-d_1$. From the previous chain of inequalities we can conclude that $\beta_i(r)\le y_i(r)$ for every $r\in [0,b]$, where
$y_i$ is the solution to the Cauchy problem
\begin{eqnarray*}
\left\{
\begin{array}{ll}
y'(r)=-c_0h(y(r)), & r\ge 0,\\[1mm]
y(0)=\varphi(x).
\end{array}
\right.
\end{eqnarray*}
Indeed, if this were not the case, we could determine $s_0\in (0,b)$ and an interval $J$ containing $s_0$ such that $\beta_i>y_i$ in $J$.
From \eqref{star-2}, written with the interval $(0,b)$ being replaced by $(s_1,s_2)$, we can infer that the function
$s\mapsto \beta(s)+c_0Ms$ is decreasing, where $M$ denotes the minimum of $h$ in $\R$. Therefore,
$\lim_{s\to s_0^-}(\beta(s)+c_0Ms)>\lim_{s\to s_0^-}(y(s)+c_0Ms)$ and this implies that $\beta$ is greater than $y$ in a left neighborhood of
$s_0$. Denoting by $\tau$ the infimum of $J$, then clearly, $\beta(\tau)=y(\tau)$. Writing \eqref{star-2} with $[0,b]$ being replaced by $[a,s]$, $s\in J$,
and observing that
\begin{eqnarray*}
y'(s)-y'(a)=-c_0\int_a^sh(y(r))dr
\end{eqnarray*}
we get
\begin{eqnarray*}
\beta(s)-y(s)\le c_0\int_a^s[h(y(r))-h(\beta(r))]dr,\qquad\;\,s\in J,
\end{eqnarray*}
which is clearly a contradiction since the left-hand side of the previous inequality is positive while its right-hand side is negative.

To conclude this step, it suffices to observe that $y$ is bounded from above in $[\delta,+\infty)$ for every $\delta>0$ as it can be easily checked writing
\begin{eqnarray*}
\int_{\varphi(x)}^{y(t)}\frac{dr}{h(r)}=-c_0t
\end{eqnarray*}
and using the integrability of $1/h$ in a neighborhood of $+\infty$.
Now, arguing as in the proof of \cite[Theorem 4.4]{AngLor10Com} we can prove that the functions $\beta_i$ are
bounded from above in $[\delta,b]$ for every $0<\delta<b$, uniformly with respect to $x\in \Rd$ and this proves the claim.

{\em Step 3.} Here, we show that the measures $\{p_{ij}(t,s,x,\cdot):\, x\in \Rd\}$ are tight for any
$(t,s)\in \Lambda_{[d_1,d_2]}$ and $i,j=1, \ldots, m$.
Let us fix $\varepsilon>0$. Then, arguing as in \eqref{loc_comp}, we can prove that there exists $R_0>0$ such that
\begin{align*}
0<p_{ij}(t,s,x,\Rd\setminus B_r)
=\left(\inf_{\Rd\setminus B_r}\varphi\right)^{-1}(\G(t,s)\varphi \bm{\one})_i(x)
\le K_\delta\left(\inf_{\Rd\setminus B_r}\varphi\right)^{-1}<K_\delta \varepsilon,
\end{align*}
for any $s,t\in \Lambda_{[d_1,d_2]}$ with $t\ge s+\delta$ and $r>R_0$, where we have taken into account that the family
$\{p_{ij}(t,s,x,\cdot): x\in \Rd, (t,s)\in \lambda_I\}$ are equivalent to the Lebesgue measure for any $i,j=1, \ldots, m$.
This implies that the family $\{p_{ij}(t,s,x,\cdot):\, x\in \Rd\}$ is tight for any $(t,s)\in \Lambda_{[d_1,d_2]}$,
with $t\ge s+\delta$ and $i,j=1, \ldots, m$.
The arbitrariness of $\delta$ allows to deduce the tightness of $p_{ij}(t,s,x,\cdot)$ for any $(t,s)\in \Lambda_{[d_1,d_2]}$
and $i,j=1, \ldots, m$ and, consequently, from Proposition \ref{equi-tight}, the compactness of $\G(t,s)$ in $C_b(\Rd;\Rm)$
for any $(t,s)\in \Lambda_{[d_1,d_2]}$. For the other values of $s,t$ the compactness of $\G(t,s)$ can be proved
by using the evolution law and the continuity of the operators $\G(t,s)$ in $\mathcal{L}(C_b(\Rd;\Rm))$. This completes the proof.

\end{proof}

\section{The action of the evolution operator $G(t,s)$ over some functional spaces}
Here, we study how the evolution operator $\G(t,s)$ acts over the spaces $C_0(\Rd;\Rm)$ of the continuous functions $\f:\Rd\to\Rm$ vanishing at infinity componentwise (i.e., $\lim_{|x|\to +\infty}f_i(x)=0$ for any $i=1, \ldots, m$), $L^p(\Rd;\Rm)$ and $C^1_b(\Rd;\Rm)$.

It is well known in the scalar case that the compactness property in the space of bounded and continuous functions is a sufficient condition which implies that
the spaces $C_0(\Rd)$ and $L^p(\Rd)$ are not preserved by action of the semigroup. Actually this is the case also for
the vector-valued evolution operator $\G(t,s)$ as we prove in the following.

\begin{thm}\label{thm_c0}
Under the assumptions of Theorem $\ref{thm_comp_2}$, the space $C_0(\Rd;\Rm)$
is not preserved by $\G(t,s)$ for any $(t,s)\in \Lambda_I$ with $s\le d_2$ and $t\ge d_1$.
On the other hand, if Hypotheses $\ref{hyp-base}(i)$-$(iv)$ and $\ref{allpos}$ hold true and there exist $\lambda_0>0$, $[a,b]\subset I$ and a function $\vv\in C^2(\Rd;\Rm)$, whose entries are all strictly
positive, vanishing at infinity and such that $\lambda_0 \vv-\A(t)\vv\ge 0$ for any $(t,x)\in [a,b]\times \Rd$, then
 $\G(t,s)(C_0(\Rd;\Rm))\subset C_0(\Rd;\Rm)$ for any $(t,s)\in \Lambda_{[a,b]}$.
\end{thm}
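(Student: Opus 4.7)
For the first, negative assertion I proceed by contradiction: assume that $\G(t,s)(C_0(\Rd;\Rm))\subset C_0(\Rd;\Rm)$ and approximate $\bm{\one}$ from below by $\f_n:=\vartheta_n\bm{\one}\in C_c(\Rd;\Rm)$, where $\vartheta_n$ is a standard cutoff with $\chi_{B_n}\le\vartheta_n\le\chi_{B_{n+1}}$. The sequence $(\f_n)$ is bounded by $1$ in $C_b(\Rd;\Rm)$, and monotone convergence applied to the kernel representation of Theorem \ref{teo} together with the positivity of the $p_{ki}(t,s,x,\cdot)$ from Proposition \ref{pro} yields $(\G(t,s)\f_n)_k(x)\nearrow (\G(t,s)\bm{\one})_k(x)$ pointwise as $n\to+\infty$. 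The compactness of $\G(t,s)$ in $C_b(\Rd;\Rm)$ from Theorem \ref{thm_comp_2} produces a uniformly convergent subsequence; uniqueness of pointwise limits forces it (and hence the whole sequence) to converge uniformly to $\G(t,s)\bm{\one}$. As $\G(t,s)\f_n\in C_0(\Rd;\Rm)$ by the standing assumption and $C_0(\Rd;\Rm)$ is closed in $C_b(\Rd;\Rm)$ for the sup norm, I conclude $\G(t,s)\bm{\one}\in C_0(\Rd;\Rm)$. This contradicts the uniform positive lower bound $(\G(t,s)\bm{\one})_k\ge c_0>0$ proved in Step 1 of the proof of Theorem \ref{thm_comp_2}. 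The main difficulty of this step is that such a bound is established there only for $(t,s)\in\Lambda_{[d_1,d_2]}$; the remaining pairs in the stated range are handled by writing $\G(t,s)=\G(t,t')\G(t',s')\G(s',s)$ with $s':=s\vee d_1$, $t':=t\wedge d_2$, and arguing on the middle factor together with the positivity and continuity properties of the outer factors.

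For the second, positive assertion I use a truncation plus comparison strategy. Given $\f\in C_0(\Rd;\Rm)$, set $\f_N:=\vartheta_N\f\in C_c(\Rd;\Rm)$; since $\f$ vanishes at infinity, $\|\f-\f_N\|_\infty\to 0$, whence by \eqref{moto} $\G(t,s)\f_N$ converges uniformly on $\Rd$ to $\G(t,s)\f$. Since $C_0(\Rd;\Rm)$ is closed under uniform convergence, it is enough to show that each $\G(t,s)\f_N$ lies in $C_0(\Rd;\Rm)$. Using that $\vv$ has continuous, strictly positive entries and that $\f_N$ is compactly supported, I choose $M_N>0$ so that $-M_N\vv\le\f_N\le M_N\vv$ componentwise. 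The crucial comparison is the pointwise estimate
\begin{equation*}
|\G(t,s)\f_N|\le M_Ne^{\lambda_0(t-s)}\vv
\end{equation*}
componentwise on $\Rd$, which, together with $\vv\in C_0(\Rd;\Rm)$, immediately yields $\G(t,s)\f_N\in C_0(\Rd;\Rm)$.

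The comparison itself will be the main obstacle. Set $\ww^\pm:=M_Ne^{\lambda_0(t-s)}\vv\pm\G(t,s)\f_N\in C_b([s,T]\times\Rd;\Rm)\cap C^{1,2}((s,T]\times\Rd;\Rm)$. Its initial trace satisfies $\ww^\pm(s,\cdot)=M_N\vv\pm\f_N\ge\bm 0$, and the supersolution hypothesis for $\vv$ gives $D_t\ww^\pm-\A(t)\ww^\pm=M_Ne^{\lambda_0(t-s)}(\lambda_0\vv-\A(t)\vv)\ge\bm 0$. To conclude $\ww^\pm\ge\bm 0$ via Theorem \ref{maxprinc} I must bypass the extra requirement, not assumed in this part, that the row sums of $C$ be nonpositive. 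I do this by the standard gauge shift $\tilde\ww^\pm:=e^{-\mu(t-s)}\ww^\pm$ with $\mu\ge\sup_{I\times\Rd}\max_k\sum_ic_{ki}$, which is finite by Hypothesis \ref{hyp-base}(v): the operator $\A(t)-\mu I$ has matrix potential with nonpositive row sums, preserves the nonnegative off-diagonal entries from Hypothesis \ref{allpos}, and still admits the Lyapunov function of Hypothesis \ref{hyp-base}(iv). Applying Theorem \ref{maxprinc} to $-\tilde\ww^\pm$ for the shifted operator yields $\tilde\ww^\pm\ge\bm 0$, hence $\ww^\pm\ge\bm 0$, and the proof is complete.
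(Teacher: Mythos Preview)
Your proof is correct and follows essentially the same route as the paper: for the negative part, uniform convergence of $\G(t,s)(\vartheta_n\bm{\one})$ to $\G(t,s)\bm{\one}$ via compactness combined with the lower bound on $\G(t,s)\bm{\one}$ from Step~1 of Theorem~\ref{thm_comp_2}; for the positive part, reduction to compactly supported data followed by comparison with $e^{\lambda_0(t-s)}\vv$ through the maximum principle, your gauge shift by $e^{-\mu(t-s)}$ being exactly the paper's ``without loss of generality $\lambda_0\ge\max_k\sum_j c_{kj}$''. You are in fact slightly more careful than the paper in flagging that Step~1 of Theorem~\ref{thm_comp_2} is stated only for $(t,s)\in\Lambda_{[d_1,d_2]}$ and that an evolution-law factoring is needed for the remaining pairs.
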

\begin{proof}
Let us fix $(t,s)\in \Lambda_I$ with $s\le d_2$ and $t\ge d_1$ ($i=1,\ldots, m$) and consider a sequence $(\f_n)\subset C_0(\Rd;\Rm)$ such that
$\chi_{B_n}{\bm{\one}}\le \f_n\le \chi_{B_{n+1}}{\bm{\one}}$ for any $n\in \N$.  Formula \eqref{int_rep_1}, estimate \eqref{stimaunif} and the compactness
of $\G(t,s)$ in $C_b(\Rd;\Rm)$ yield that $\G(t,s)\f_n$ converges uniformly in $\Rd$ to $\G(t,s)\bm{\one}$ as $n\to +\infty$.
Since $\G(t,s){\bm \one}$ is bounded from below by a positive constant (see Step 1 in the proof of Theorem \ref{thm_comp_2}),
it follows immediately that $\G(t,s)$ does not preserve $C_0(\Rd;\Rm)$.

Now, we prove the second part of the claim. Let $a,b$ and $\vv$ be as in statement and without loss of generality we can assume that $\lambda_0 \ge \max_{i=1,\ldots, m}\sum_{j=1}^m c_{ij}$ in order to apply Theorem \ref{maxprinc} to $\A(t)-\lambda_0\bm{I}$.
We begin by  proving that $\G(t,s)$ preserves the subset of $C_0(\Rd;\Rm)$ consisting of nonnegative
functions which belong to $C_c(\Rd;\Rm)$.
Let $\f\in C_c(\Rd;\Rm)$ be a nonnegative function and let $r>0$ be such that ${\rm supp}f_k\subset B_r$ for any $k=1,\ldots, m$.
The function $\zz(t,\cdot)= e^{-\lambda_0(t-s)}\uu(t,\cdot)-\delta^{-1} \|\f\|_\infty\vv$ where $\uu$ is the classical
solution of the problem \eqref{pb_Cauchy}, $\delta =\max_{k\in \{1,\ldots, m\}}\inf_{B_r}v_k$ being $\vv=(v_1, \ldots, v_m)$,
belongs to $C_b([s,T]\times \Rd)\cap C^{1,2}((s,T]\times \Rd)$ and solves the problem
\begin{equation*}
\left\{\begin{array}{ll}
D_t \zz(t,x)\le (\A(t)-\lambda_0\bm{I})\zz(t,x),\qquad\,\, &(t,x)\in(s,+\infty)\times \Rd,\\
\zz(s,x)\le 0,\qquad\;\, & x\in \Rd.
\end{array}\right.
\end{equation*}
Hence, Theorem \ref{maxprinc} can be applied to $\A(t)-\lambda_0\bm{I}$ to deduce that $\zz(t,x)\le 0$ in $[s,+\infty)\times \Rd$
or equivalently that $\bm{0}\le \uu \le e^{\lambda_0(t-s)}\delta^{-1}\|\f\|_\infty \vv$, which implies that $\uu$ belongs to $C_0(\Rd;\Rm)$.
Now, if $\f$ is not nonnegative then we can split $\f=\f^{+}-\f^{-}$ and,
arguing as above separately for $\f^+$ and $\f^-$, we deduce that the solutions $\uu^{\pm}$ of \eqref{pb_Cauchy} with $\f$ being replaced by $\f^{\pm}$ respectively,
belong to $C_0(\Rd;\Rm)$ as well as the solution $\uu=\uu^+-\uu^-$ of \eqref{pb_Cauchy}.
In the general case, we can argue by approximation. Indeed, let $\f$ be a bounded continuous function and $(\f_n)$ be a sequence
of $C_c(\Rd;\Rm)$ functions converging uniformly to $\f$ in $\Rd$. Then, since $\G(t,s)\f_n$ converges to
$\G(t,s)\f$ uniformly as $n\to +\infty$ for any $t\ge s$ we conclude also in this case.
\end{proof}

\begin{thm}\label{thm_lp}
The following statements hold true.
\begin{enumerate}[\rm (i)]
\item
Under the assumptions of Theorem $\ref{thm_comp_2}$, the space $L^p(\Rd;\Rm)$, $1\le p<+\infty$,
is not preserved by $\G(t,s)$ for any $(t,s)\in \Lambda_I$ with $s\le d_2$ and $t\ge d_1$.
\item
Let $q_{ij}^k\in C^{0,2}([a,b]\times\Rd)$ and $b_i^k\in C^{0,1}([a,b]\times\Rd)$, for any $i,j,l=1, \ldots, d$, $k=1,\ldots, m$ and some $[a,b]\subset I$, and
let $\kappa_C:[a,b]\times \Rd\to\R$ be any smooth function which bounds from above the quadratic form associated to the matrix $C$.
Further, suppose that
\begin{equation}\label{gammaab}
\Gamma_{[a,b]}:=\sup_{[a,b]\times \Rd}\Big (2\kappa_C-\min_{k=1,\ldots,m}{\rm div}_x{\bm\gamma}^k\Big )<+\infty,
\end{equation}
where ${\bm \gamma}^k:=(b_1^k-\sum_{j=1}^d D_jq_{1j}^k,\ldots,b_m^k-\sum_{j=1}^d D_jq_{mj}^k)$, $k=1, \ldots, m$.
Then, for any $p\ge 2$ and $(t,s)\in\Lambda_{[a,b]}$, $L^p(\Rd;\Rm)$ is invariant under $\G(t,s)$ and
\begin{equation}\label{l^p}
\|\G(t,s)\f\|_{L^p(\Rd;\Rm)}\le c_p(t-s)\|\f\|_{L^p(\Rd;\Rm)},
\end{equation}
where $c_p(r)= e^{[K(1-2/p)+\Gamma_{[a,b]}/p]r}$ and $K$ is defined in \eqref{moto}.
\item
Besides the assumptions in $(ii)$, assume that $q_{ij}^k\in C^{\alpha/2, 2+\alpha}_{\rm loc}([a,b]\times \Rd)$, $b_i^k\in C^{\alpha/2, 1+\alpha}_{\rm loc}([a,b]\times \Rd)$, for any $i,j=1\ldots,d$ and $k=1,\ldots, m$, and
\begin{equation}\label{p2}
\sup_{[a,b]\times \Rd}\bigg(\sum_{j=1}^mc_{jk}+\sum_{i,j=1}^d D_{ij}q_{ij}^k-\sum_{i=1}^dD_ib_i^k\bigg)<+\infty,\qquad\;\, k=1, \ldots, m.
\end{equation}
Then, estimate \eqref{l^p} can be extended to the case $p\in [1,2)$ taking
$c_p(r)=e^{[K^*(2/p-1)+\Gamma_{[a,b]}(1-1/p)]r}$ where $K^*\in\R$ is such that $\|\G^*(t,s)\|_{\mathcal{L}(C_b(\Rd;\Rm))}\le e^{K^*(t-s)}$ and $\G^*(t,s)$ is the adjoint operator of $\G(t,s)$.
\end{enumerate}
\end{thm}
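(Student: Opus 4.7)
I would follow the strategy used for $C_0(\Rd;\Rm)$ in Theorem~\ref{thm_c0}. Take the sequence $\f_n=\chi_{B_n}\bm{\one}\in L^p(\Rd;\Rm)\cap B_b(\Rd;\Rm)$. From the kernel representation~\eqref{int_rep_1},
\[
0\le (\G(t,s)\bm{\one})_i(x)-(\G(t,s)\f_n)_i(x)\le \sum_{j=1}^m p_{ij}(t,s,x,\Rd\setminus B_n),
\]
and the tightness of the kernels uniformly in $x$ (which follows from Proposition~\ref{equi-tight} combined with the compactness granted by Theorem~\ref{thm_comp_2}) forces the right-hand side to vanish uniformly in $x\in\Rd$. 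Together with the positive lower bound $\G(t,s)\bm{\one}\ge c_0\bm{\one}$ established in Step~1 of the proof of Theorem~\ref{thm_comp_2}, this gives $\G(t,s)\f_n\ge (c_0/2)\bm{\one}$ for $n$ large; since $\bm{\one}\notin L^p(\Rd;\Rm)$, no bounded extension of $\G(t,s)$ to $L^p(\Rd;\Rm)$ can exist.

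\textbf{Part (ii).} The plan is to prove the $p=2$ version of~\eqref{l^p} by an energy estimate and then reach every $p\in[2,+\infty]$ by Riesz--Thorin interpolation with~\eqref{moto}. To avoid justifying integration by parts on the whole space, I would work with the Dirichlet approximants $\uu_n$ from the proof of Theorem~\ref{bbthe}, which satisfy $\uu_n=\bm 0$ on $\partial B_n$. Writing each $\mathcal A_k(t)$ in divergence form as $\mathrm{div}(Q^k\nabla\cdot)+\langle\bm\gamma^k,\nabla\cdot\rangle$ (legitimate by the hypothesis $q^k_{ij}\in C^{0,2}$, $b^k_i\in C^{0,1}$) and using the pointwise identity
\[
u_{n,k}(\A(t)\uu_n)_k=\mathcal A_k(t)(u_{n,k}^2/2)-\langle Q^k\nabla u_{n,k},\nabla u_{n,k}\rangle+u_{n,k}(C\uu_n)_k,
\]
all boundary terms generated by the integration by parts over $B_n$ vanish since $u_{n,k}=0$ on $\partial B_n$, which leaves
\[
\frac{d}{dt}\int_{B_n}|\uu_n|^2\, dx=-\int_{B_n}\sum_{k=1}^m u_{n,k}^2\,\mathrm{div}\,\bm\gamma^k\, dx-2\int_{B_n}\sum_{k=1}^m\langle Q^k\nabla u_{n,k},\nabla u_{n,k}\rangle\, dx+2\int_{B_n}\langle C\uu_n,\uu_n\rangle\, dx.
\]
Ellipticity makes the second integral nonnegative, while
$-\sum_k u_{n,k}^2\,\mathrm{div}\,\bm\gamma^k+2\langle C\uu_n,\uu_n\rangle\le\Gamma_{[a,b]}|\uu_n|^2$
pointwise; Gronwall's lemma then yields $\|\uu_n(t,\cdot)\|_{L^2(B_n;\Rm)}\le e^{\Gamma_{[a,b]}(t-s)/2}\|\f\|_2$. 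The $C^{1,2}_{\mathrm{loc}}$-convergence of $\uu_n$ to $\G(\cdot,s)\f$ established in Theorem~\ref{bbthe}, combined with Fatou's lemma, gives $\|\G(t,s)\f\|_2\le e^{\Gamma_{[a,b]}(t-s)/2}\|\f\|_2$ for $\f\in C_c^\infty(\Rd;\Rm)$, and a density argument extends the estimate to $L^2(\Rd;\Rm)$. The Riesz--Thorin theorem applied to this $L^2$ bound and~\eqref{moto} yields~\eqref{l^p} for every $p\in[2,+\infty]$ with $c_p$ of the stated form.

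\textbf{Part (iii) and main obstacle.} Here I argue by duality. The additional regularity together with~\eqref{p2} is precisely Hypothesis~\ref{hyp-base}(v) for the formal adjoint of $\A(t)$ (drift $2\,\mathrm{div}\,Q^k-\bm b^k$, potential $C^T$), so Theorem~\ref{bbthe} applied to that operator produces $\G^*(t,s)$ together with the $L^\infty$ bound $e^{K^*(t-s)}$. Duality transfers the $L^2$ bound of part~(ii) to $\G^*$, and Riesz--Thorin between the $L^2$ and $L^\infty$ estimates for $\G^*$ gives, for $q\in[2,+\infty]$,
\[
\|\G^*(t,s)\|_{q\to q}\le \exp\Bigl(\tfrac{\Gamma_{[a,b]}}{q}(t-s)+K^*\bigl(1-\tfrac{2}{q}\bigr)(t-s)\Bigr).
\]
Setting $q=p/(p-1)$, so that $1/q=1-1/p$ and $1-2/q=2/p-1$, and using $\|\G(t,s)\|_{L^p\to L^p}=\|\G^*(t,s)\|_{L^q\to L^q}$ produces exactly $c_p(t-s)$ for $p\in[1,2)$. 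The main obstacle is the energy step of part~(ii): one really has to go through the Dirichlet approximants, because a bounded classical solution $\G(\cdot,s)\f$ on the whole space need not have enough spatial decay to support the required integration by parts. Once the $L^2$ bound on $B_n$ is in place, both the passage to the limit and the subsequent Riesz--Thorin and duality manipulations are standard.
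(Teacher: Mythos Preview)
Your proposal is correct and follows essentially the same route as the paper. In part~(i) both you and the paper combine the uniform tightness of the kernels (Proposition~\ref{equi-tight}) with the lower bound $\G(t,s)\bm{\one}\ge c_0\bm{\one}$ from Step~1 of Theorem~\ref{thm_comp_2} to show that $\G(t,s)(\chi_{B_R}\bm{\one})$ stays bounded away from zero for $R$ large; the paper simply fixes one such $R$ while you pass to the limit along a sequence, but the content is the same. In part~(ii) the paper also works on the Dirichlet approximants, performs the same integration by parts (dropping the elliptic term $\langle Q^k\nabla u_{R,k},\nabla u_{R,k}\rangle\ge 0$), applies Gronwall and then Fatou, and finally interpolates with~\eqref{moto}; your organization via the divergence-form identity $u\mathcal A_k u=\mathcal A_k(u^2/2)-\langle Q^k\nabla u,\nabla u\rangle$ is just a cleaner bookkeeping of the same computation. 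In part~(iii) both arguments apply Theorem~\ref{bbthe} to the formal adjoint (condition~\eqref{p2} being Hypothesis~\ref{hyp-base}(v) for $\A^*$), obtain the $L^q$ bounds~\eqref{polacchia} for $\G^*$ with $q\ge 2$, and finish by duality; your remark that the $L^2$ bound for $\G^*$ follows from that of $\G$ by self-duality is in fact how~\eqref{polacchia} should be read, since re-running the energy estimate for $\A^*$ would produce the adjoint $\Gamma$-constant rather than $\Gamma_{[a,b]}$ itself.
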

\begin{proof}
(i) Let us fix $(t,s)\in \Lambda_I$ with $s\le d_2$ and $t\ge d_1$.
To prove that $L^p(\Rd;\Rm)$ ($p\in [1,+\infty)$) is not preserved by $\G(t,s)$, it suffices to consider the characteristic function $\chi_{B_R}$ where $R$ is
such that $\sum_{j=1}^m p_{ij}(t,s,x,\Rd\setminus B_R)\le c_0/2$, for any $i=1,\ldots, m$, and $c_0$ is defined in \eqref{bfb} (such a radius $R$ exists thanks
to the compactness of $\G(t,s)$ and Proposition \ref{tight-uni}).
Indeed, in this case, $\G(t,s)\chi_{B_R}\bm{\one}=\G(t,s)\bm{\one}-\G(t,s)(\chi_{\Rd\setminus B_R}\bm{\one})\ge c_0/2$ in $\Rd$ and consequently it does not belong to $L^p(\Rd;\Rm)$ for any $1\le p<\infty$.

(ii) To begin with, we notice that it suffices to prove the claim for nonnegative functions $\f$ belonging to $C_c(\Rd;\Rm)$. Indeed, for a general $\f\in C_c(\Rd;\Rm)$ we get the result simply writing $\f=\f^+-\f^-$ and observing that $|\f^{\pm}|\le|\f|$.
The case of an $L^p(\Rd;\Rm)$-function can be obtained by density.
Moreover, we observe that, if we prove \eqref{l^p} with $p=2$, then, thanks to the estimate \eqref{moto}, the Riesz-Thorin interpolation theorem yields estimate \eqref{l^p} for any $p\ge 2$ with $c_p(t-s)=[c_2(t-s)]^{2/p}e^{K(t-s)(1-2/p)}$ for any $(t,s)\in \Lambda_{[a,b]}$.
So, let us consider a nonnegative function $\f \in C_c(\Rd;\Rm)$ and prove that
\begin{equation}\label{lp-pre}
\|\mathbf{G}^{D}_R(t,s)\f\|_{L^2(B_R;\Rm)}\le e^{\Gamma_{[a,b]}(t-s)}\|\f\|_{L^2(\Rd;\Rm)},\qquad\;\, (t,s)\in \Lambda_{[a,b]},
\end{equation}
where  $\mathbf{G}^{D}_R(t,s)$ denotes the evolution operator associated to $\A(t)$ in $C(\overline{B_R};\Rm)$ with homogeneous Dirichlet boundary conditions.
Once \eqref{lp-pre} is proved, noticing that $\mathbf{G}^D_R(t,s)\f$ converges pointwise to $\G(t,s)\f$ as $R\to+\infty$, the Fatou lemma yields
\eqref{l^p} with $p=2$.

So, let us prove \eqref{lp-pre}. To simplify the notation we set $\uu_{R}(t,x):= (\G^{D}_{R}(t,s)\f)(x)$ for any $(t,s)\in \Lambda_{[a,b]}$ and $x\in\Rd$.
Using Hypothesis \ref{hyp-base} (ii) and the integration by parts formula we get
\begin{align*}
&\frac{d}{dt}\|\uu_{R}(t,\cdot)\|_{L^2(B_R;\Rm)}^2\\
=&2\sum_{k=1}^m\int_{B_R} u_{R,k}(t,\cdot)(\A(t)\uu_{R})_k(t,\cdot) dx\\
=&2\sum_{k=1}^m\sum_{i,j=1}^d\int_{B_R}q_{ij}^k(t,\cdot)u_{R,k}(t,\cdot)D_{ij}u_{R,k}(t,\cdot) dx\\
&+2\sum_{k=1}^m\sum_{i=1}^d\int_{B_R}b_{i}^k(t,\cdot)u_{R,k}(t,\cdot)D_{i}u_{R,k}(t,\cdot) dx
+2\int_{B_R}\langle C(t,\cdot)\uu_{R}(t,\cdot),\uu_{R}(t,\cdot)\rangle dx\\
\le&-\sum_{k=1}^m\sum_{i,j=1}^d\int_{B_R}(D_jq_{ij}^k(t,\cdot)-b_i^k(t,\cdot))D_{i}(u_{R,k}(t,\cdot))^2 dx+2\int_{B_R}\kappa_C(t,\cdot)|\uu_{R}(t,\cdot)|^2 dx\\
=&  -\sum_{k=1}^m\int_{B_R}{\rm div}_x \bm{\gamma}^k(t,\cdot) (u_{R,k}(t,\cdot))^2  dx+2\int_{B_R}\kappa_C(t,\cdot)|\uu_{R}(t,\cdot)|^2 dx\\
\le &\Gamma_{[a,b]}\int_{B_R}|\uu_{R}(t,\cdot)|^2 dx.
\end{align*}
Consequently, $\|\uu_{R}(t,\cdot)\|_{L^2(B_R;\Rm)}^2\le e^{\Gamma_{[a,b]}(t-s)}\|\f\|_{L^2(B_R;\Rm)}^2$, which gives the claim.

(iii) The additional assumptions in the statement allows us to apply Theorem \ref{bbthe} to the adjoint operator $\A^*(t)$.
This implies that the adjoint evolution operator $\{\G^*(t,s)\}_{t\ge s\in I}$ is well defined in $C_b(\Rd;\Rm)$ and satisfies the estimate
$\|\G^*(t,s)\|_{L(C_b(\Rd;\Rm))}\le e^{K^*(t-s)}$ for any $t\ge s\in I$ and some positive constant $K^*$. Moreover, the arguments in the proof of property (ii) show that
\begin{equation}
\|\G^*(t,s)\|_{L(L^q(\Rd;\Rm))}\le e^{[K^*(1-2/q)+\Gamma_{[a,b]}/q](t-s)},\qquad\;\, (t,s)\in\Lambda_{[a,b]},\;\,q\ge 2.
\label{polacchia}
\end{equation}

To complete the proof, it suffices to recall that
\begin{align*}
&\|\G(t,s)f\|_{L^p(\Rd;\Rm)}\\
=&\sup\bigg\{\int_{\Rd}\langle \f,\G^*(t,s)\g\rangle dx: \g\in C_c(\Rd;\Rm) \text{ and } \|\g\|_{L^{p'}(\Rd;\Rm)}\le 1\bigg\}
\end{align*}
for any $\f\in L^p(\Rd;\Rm)$ ($p\in [1,2)$) and use \eqref{polacchia}.
\end{proof}

Finally, we conclude this section investigating on the action of $\G(t,s)$ over the space $C^1_b(\Rd;\Rm)$. Theorem \ref{bbthe} states that the evolution operator maps the space $C_b(\Rd;\Rm)$ into $C_b(\Rd;\Rm)\cap C^1(\Rd;\Rm)$, but in general, $J_x\G(t,s)\f$ is not bounded whenever $\f$ belongs to $C^1_b(\Rd;\Rm)$. In the following Theorem \ref{stima1} we prove an uniform gradient estimate which answers to the question above.

\begin{hyp}\label{gra_est}
\begin{enumerate}[\rm (i)]
\item The coefficients $q_{ij}^k$, $b_i^k$ and $c_{ij}$ belong to $C^{\alpha/2, 1+\alpha}_{\rm loc}(I\times \Rd)$ for any $i,j=1,\ldots, d$ and $k=1, \ldots, m$;
\item there exist a positive constant $c$, $(m+2)$-functions $r_k:I\times\Rd\to \R$ $(k=1,\ldots,m)$ and $\rho_i:I\times\Rd\to (0,+\infty)$, $(i=0,1)$ such that
\begin{align}\label{ipo}
\;\;\;\;\;\;\;\;|\nabla_x q_{ij}^k|\leq c\mu_k, \qquad \langle J_x b^k\xi,\xi \rangle \le  r_k|\xi|^2,\qquad
|c_{hk}|\leq \omega_0\rho_0,\qquad
|\nabla_x c_{h'k'}|\le\omega_1 \rho_1
\end{align}
in $I\times \Rd$ for any $i,j=1, \ldots,d$, $h,h',k,k'=1, \ldots, m$, with $h\neq k$. In addition there exist two positive constants $\alpha_{k,J}$ and $\gamma_{k,J}$ such that
\begin{equation}
\label{defi}
\;\;\;\;\;\;\;\;\;\sigma_{k,J}:=\sup_{J\times\Rd}\left\{\left(\frac{d^2c^2}{4}-\alpha_{k,J}\right)\mu_k+r_k+c_{kk}+\gamma_{k,J}(\omega_0\rho_0^2+\omega_1\rho_1^2)\right\}<+\infty
\end{equation}
for any bounded interval $J\subset I$.
\end{enumerate}
\end{hyp}

\begin{thm}\label{stima1}
Assume that Hypotheses $\ref{gra_est}$ are satisfied. Then, for any $\f\in C^1_b(\Rd;\R^m)$ and $T>s$, the map $(s,T)\times \Rd\ni (t,x)\to |J_x (\G(t,s)\f)(x)|$ is bounded and satisfies the estimate
\begin{equation}\label{claim}
\||J_x \G(t,s)\f|\|_\infty\le \widetilde c_{s,T}\|\f\|_{C^1_b(\Rd;\Rm)},\qquad\;\,t\in (s,T),
\end{equation}
for some positive constant $\widetilde c$ depending on $s, T, m$, $\mu_k$ $($see Hypothesis $\ref{hyp-base}(ii))$ and $\sigma_{k,(s,T)}$ $(k=1,\ldots,m)$.
\end{thm}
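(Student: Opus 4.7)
The argument is a Bernstein-type computation coupled with the maximum principle of Theorem \ref{maxprinc}. By the approximation scheme in the proof of Theorem \ref{bbthe} (approximation by solutions of Dirichlet problems on balls, combined with interior parabolic Schauder estimates) and the extra smoothness granted by Hypothesis \ref{gra_est}(i), one may assume that $\uu=\G(\cdot,s)\f$ admits bounded third-order spatial derivatives on $(s,T)\times\Rd$, and recover the general case by density. The goal is to derive a pointwise a priori estimate on $|J_x\uu|^2$.

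The core computation is the evolution of $|\nabla u_k|^2$ for each $k$. Differentiating $D_t u_k=\mathcal{A}_k(t)u_k+(C\uu)_k$ in $x_h$ and summing over $h$ gives
\begin{align*}
\tfrac12\bigl(D_t|\nabla u_k|^2-\mathcal{A}_k(t)|\nabla u_k|^2\bigr)
=&-\sum_{h=1}^d\langle Q^k\nabla D_h u_k,\nabla D_h u_k\rangle\\
&+\sum_{h=1}^d D_h u_k\Bigl(\sum_{i,j}D_h q_{ij}^k\,D_{ij}u_k+\sum_i D_h b_i^k\,D_i u_k\Bigr)\\
&+\sum_{h=1}^d D_h u_k\,D_h(C\uu)_k.
\end{align*}
Hypothesis \ref{gra_est}(ii) is applied as follows: the first sum is bounded above by $-\mu_k|D^2u_k|^2$; the $\nabla q^k$-term by $dc\,\mu_k|\nabla u_k|\,|D^2u_k|\le\mu_k|D^2u_k|^2+\tfrac{d^2c^2}{4}\mu_k|\nabla u_k|^2$ (Young's inequality with the balance that completely absorbs $\mu_k|D^2u_k|^2$); the $\nabla b^k$-term, which equals $\langle J_x b^k\nabla u_k,\nabla u_k\rangle$, by $r_k|\nabla u_k|^2$; and the $C$-coupling term, split as $\sum_{h,l}D_h u_k\,D_h c_{kl}\,u_l+c_{kk}|\nabla u_k|^2+\sum_{l\ne k}c_{kl}\langle\nabla u_k,\nabla u_l\rangle$, is controlled by $\bigl[\gamma_{k,J}(\omega_0\rho_0^2+\omega_1\rho_1^2)+c_{kk}\bigr]|\nabla u_k|^2$ plus nonnegative ``exchange'' contributions involving $|\nabla u_l|^2$ and $u_l^2$ for $l\ne k$, through the bounds on $|c_{hk}|$ and $|\nabla c_{h'k'}|$ and Young's inequality weighted by $\gamma_{k,J}$.

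Introduce next the Bernstein-type auxiliary vector function $\vv=(v_1,\ldots,v_m)$ with
\begin{eqnarray*}
v_k(t,x):=\alpha_{k,J}\,u_k(t,x)^2+(t-s)\,|\nabla u_k(t,x)|^2,\qquad J=(s,T),\quad k=1,\ldots,m.
\end{eqnarray*}
The identity $D_t(u_k^2)-\mathcal{A}_k(u_k^2)=2u_k(C\uu)_k-2\langle Q^k\nabla u_k,\nabla u_k\rangle\le 2u_k(C\uu)_k-2\alpha_{k,J}\mu_k|\nabla u_k|^2$ (after multiplication by $\alpha_{k,J}$) supplies exactly the $-\alpha_{k,J}\mu_k|\nabla u_k|^2$-term appearing in \eqref{defi}. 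Combining it with the inequalities of the previous paragraph yields, on $(s,T]\times\Rd$, the componentwise parabolic inequality
\begin{eqnarray*}
D_t v_k-\mathcal{A}_k(t)v_k\le \sigma_{k,J}\,v_k+\sum_{l\ne k}N_{kl}(t,x)\,v_l+R_k(t,x),\qquad k=1,\ldots,m,
\end{eqnarray*}
where $\sigma_{k,J}$ is exactly the quantity in \eqref{defi}, the coefficients $N_{kl}$ are nonnegative and bounded on $J\times\Rd$, and $R_k$ is a bounded residual dominated uniformly by $c\,\|\f\|_{C^1_b(\Rd;\Rm)}^2$.

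To conclude, we apply Theorem \ref{maxprinc} (after the perturbation reduction of Step~2 in the proof of Theorem \ref{bbthe} to bring the bounded row sums of the effective potential to a nonpositive value) to the function $\vv-\tilde K e^{\Lambda(t-s)}\bm{\one}$, with $\Lambda>\max_k\bigl(\sigma_{k,J}+\sum_{l\ne k}\|N_{kl}\|_\infty\bigr)$ and $\tilde K$ large enough to dominate the initial datum of $\vv$ (i.e.\ of order $\|\f\|_{C^1_b(\Rd;\Rm)}^2$). This gives $v_k(t,x)\le \tilde c_{s,T}\|\f\|_{C^1_b(\Rd;\Rm)}^2$ on $[s,T]\times\Rd$, and since $(t-s)|\nabla u_k|^2\le v_k$, summing over $k$ yields the gradient estimate \eqref{claim} with $\tilde c_{s,T}$ depending only on $s,T,m$, $\mu_k^0$ and $\sigma_{k,J}$. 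The principal obstacle is the precise calibration of the Young-inequality constants so that exactly $\sigma_{k,J}$ from \eqref{defi} appears (this is where the interplay of $\alpha_{k,J}$, $\gamma_{k,J}$ and the $\tfrac{d^2c^2}{4}\mu_k$-term is crucial), together with the handling of the cross-coupling produced by the off-diagonal entries of $C$ and of $\nabla C$; this coupling forces the use of a vector-valued auxiliary function $\vv$ and an appeal to the vector-valued maximum principle rather than a purely scalar argument.
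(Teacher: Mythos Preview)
Your Bernstein computation and the calibration of the Young-inequality weights so as to produce exactly $\sigma_{k,J}$ are on the right track, but the choice of auxiliary function does not prove the stated theorem. With $v_k=\alpha_{k,J}u_k^2+(t-s)|\nabla u_k|^2$ the initial datum is $v_k(s,\cdot)=\alpha_{k,J}f_k^2$, which never sees $\nabla f_k$; and the final comparison $v_k\le\tilde c_{s,T}\|\f\|_{C^1_b}^2$ only yields $|\nabla u_k(t,\cdot)|^2\le\tilde c_{s,T}\|\f\|_{C^1_b}^2/(t-s)$, which blows up as $t\to s^+$. Thus you obtain a smoothing estimate rather than the uniform bound of $|J_x\uu|$ on $(s,T)\times\Rd$ asserted in \eqref{claim}. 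The paper removes the time weight and takes $v_{n,k}=\alpha_{k,J}|u_{n,k}|^2+|\nabla_x u_{n,k}|^2$, so that $v_{n,k}(s,\cdot)=\alpha_{k,J}f_k^2+|\nabla f_k|^2\le c\|\f\|_{C^1_b}^2$ and the resulting inequality is uniform in $t\in(s,T)$.

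The closing mechanism in the paper is also different and more robust than your appeal to Theorem~\ref{maxprinc} on $\Rd$. The paper works with the Neumann approximations $\uu_n=\G_n^N(\cdot,s)\f$ on balls (so that convexity gives $\langle\nabla_xv_{n,k},\nu\rangle\le 0$ on $\partial B_n$), derives for each $k$ the \emph{scalar} inequality $D_tv_{n,k}-\mathcal A_k v_{n,k}\le\sigma_{k,J}v_{n,k}+c_k(|\uu_n|^2+|J_x\uu_n|^2)$ with $c_k$ a fixed constant, represents $v_{n,k}$ via the scalar Neumann evolution operator of $\mathcal A_k+\sigma_{k,J}$ (which is contractive up to $e^{\sigma_{k,J}(t-s)}$ on the bounded domain), sums over $k$, and closes by Gronwall on $\|J_x\uu_n(t,\cdot)\|_\infty^2$; the limit $n\to\infty$ then yields \eqref{claim}. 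This avoids two difficulties in your route: you would need a Lyapunov function for the auxiliary system on $\Rd$ (with principal parts $\mathcal A_k$ and your potential $N$), which Hypothesis~\ref{hyp-base}(iv) does not directly supply; and the zero-order term $\alpha_{k,J}u_k(C\uu)_k$ contributes off-diagonal pieces $\alpha_{k,J}c_{kl}u_ku_l$ with coefficients of size $\omega_0\rho_0$, which are in general unbounded and therefore cannot be cast as $N_{kl}v_l$ with $N_{kl}\in L^\infty$ nor absorbed into a bounded residual $R_k$ as you assert. The paper's scalar-plus-Gronwall scheme sidesteps this because the inhomogeneity $|\uu_n|^2+|J_x\uu_n|^2$ carries only a constant prefactor.
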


\begin{proof}
Let $\f$ and $T$ be as in the statement and set $J=(s,T)$. We prove \eqref{claim} with $\G(t,s)$ being replaced by $\G^{N}_n(t,s)$, i.e., the evolution operator associated to $\A$ in $C_b(B_n;\Rm)$ with homogeneous Neumann boundary conditions. Then the claim will follow letting $n\to +\infty$ according to Remark \ref{Neu}.

For every $k=1,\ldots,m$, $t\in\overline J$, $x\in\overline B_n$, we set $v_{n,k}(t,x):=\alpha_{k,J}|\uu_n(t,x)|^2+|\nabla_xu_{n,k}(t,x)|^2$, where $u_{n,k}$ denotes the $k$-th component of $\G^N_n(\cdot,s)\f$.
A straightforward computation reveals that $\langle \nabla_x v_{n,k},\nu\rangle \le 0$ on $\partial B_n$. Indeed, taking into account the convexity of $B_n$ and the fact that $u_{n,k}$ satisfies homogeneous Neumann boundary conditions on $J\times\partial B_n$ we deduce that
\begin{align*}
\langle \nabla_x v_{n,k},\nu\rangle=&2\langle \nabla_x u_{n,k},\nu\rangle u_{n,k}+
2\langle D^2_x u_{n,k} \nabla_x u_{n,k}, \nu\rangle\\
= &2\left[\langle \nabla_x\langle \nabla_x u_{n,k}, \nu\rangle, \nabla_x u_{n,k}\rangle- \langle J\nu \nabla_x u_{n,k}, \nabla_x u_{n,k}\rangle\right]
\le  0
\end{align*}
on $J\times\partial B_n$. In addition, $v_{n,k}$ is a classical solution to the differential equation
$D_t v_{n,k}- \mathcal{A}_kv_{n,k}=2\sum_{i=1}^6\psi_i$
in $J\times B_n$ where $\mathcal{A}_k$ is defined in \eqref{op_A} and
\begin{align*}
\sum_{i=1}^6\psi_i:=&\sum_{i,j=1}^d\langle \nabla_x q_{ij}^k, \nabla_x u_{n,k}\rangle D_{ij}u_{n,k}+\langle J_x \bb^k \nabla_x u_{n,k},\nabla_x u_{n,k}\rangle\\
&+\sum_{j=1}^m\langle\nabla_x c_{kj},\nabla_x u_{n,k}\rangle u_{n,j}
+\sum_{j=1}^m c_{kj}\langle \nabla_x u_{n,k}, \nabla_xu_{n,j}\rangle\\
&-\alpha_{k,J}\langle Q^k\nabla_xu_{n,k},\nabla_xu_{n,k}\rangle-\sum_{i,j=1}^d q_{ij}^k\langle \nabla_x D_i u_{n,k}, \nabla_x D_j u_{n,k}\rangle
\end{align*}
in $(s,T)\times\Rd$. Using the Cauchy-Schwartz inequality, estimates \eqref{ipo} and Hypothesis \ref{hyp-base}(ii) we can estimate the terms in $\psi_i$ $(i=1,\ldots, 5)$ as follows:
\begin{align*}
&\psi_1\le dc\mu_k|\nabla_x u_{n,k}||D^2_x u_{n,k}| \le dc\mu_k \Big(\varepsilon|D^2_x u_{n,k}|^2+\frac{1}{4\varepsilon}|\nabla_x u_{n,k}|^2 \Big),\\[1mm]
&\psi_2\le  r_k|\nabla_x u_{n,k}|^2,\\[1mm]
&\psi_3\le \sqrt{m}\omega_1 \rho_1|\nabla_x u_{n,k}||\uu_n|\le \varepsilon_1\rho_1^2|\nabla_x u_{n,k}|^2+ \frac{m}{4\varepsilon_1}\omega_1^2|\uu_n|^2\\[1mm]
&\psi_4\le c_{kk}|\nabla_x u_{n,k}|^2+ \omega_0\rho_0|\nabla_x u_{n,k}||J_x\uu_n|
\le (c_{kk}+\varepsilon_2\rho_0^2)|\nabla_x u_{n,k}|^2+\frac{m}{4\varepsilon_2}\omega_0^2|J_x\uu_n|,\\[1mm]
&\psi_5+\psi_6\le -\mu_k(\alpha_{k,J}|\nabla_x u_{n,k}|^2+|D^2_xu_{n,k}|^2).
\end{align*}
Hence, we deduce that
\begin{align*}
\sum_{i=1}^6\psi_i \le &\mu_k(dC \varepsilon -1)|D^2_x u_{n,k}|^2+\left(\frac{dc}{4\varepsilon}\mu_k\!+\!r_k\!+\!\varepsilon_1\rho_1^2\!+\!\varepsilon_2\rho_0^2\!+\!c_{kk}\!-\!\alpha_{k,J}\mu_k\right)|\nabla_x u_{n,k}|^2\\
&+\frac{m}{4\varepsilon_1}\omega_1^2|\uu_n|^2+ \frac{m}{4\varepsilon_2}\omega_0^2|J_x\uu_n|^2.
\end{align*}

Choosing $\varepsilon=(dC)^{-1}$, $\varepsilon_1=\varepsilon_2=\gamma_k$ and using \eqref{defi} we conclude that
\begin{eqnarray*}
\sum_{i=1}^6\psi_i\le \sigma_{k,J} v_{n,k}+\frac{m}{4\gamma_k}(\omega_0^2\vee \omega_1^2)(|\uu_n|^2+|J_x\uu_n|^2).
\end{eqnarray*}
A variant of the classical maximum principle shows that
\begin{align*}
v_{n,k}(t,\cdot)\le &\tilde{G}^{N}_{n,k}(t,s)(\alpha_{k,J}|f_k|^2+|\nabla f_k|^2)\\
&+\frac{m}{4\gamma_k}(\omega_0^2\vee \omega_1^2)\int_s^t \tilde{G}^{N}_{n,k}(t,r)(|\uu_n(r,\cdot)|^2+|J_x\uu_n(r,\cdot)|^2)dr
\end{align*}
for any $t\in J$, where $\tilde{G}^{N}_{n,k}(t,s)$ denotes the evolution operator associated to the operator ${\mathcal A}_k+\sigma_{k,J}$ in $C(\overline B_n)$ with homogeneous Neumann boundary conditions. Taking into account that $\|\tilde{G}^{N}_{n,k}(t,s)\|_{\mathcal L(C(\overline{B}_n))}\le e^{\sigma_{k,J}(t-s)}$ for any $t>s\in I$, we can estimate
\begin{align*}
\|\nabla_x u_{n,k}(t,\cdot)\|_\infty^2\le&  e^{\sigma_{k,J}(t-s)}(\alpha_{k,J}|f_k|^2+\|\nabla f_k\|_\infty^2)\\
&+\frac{m}{4\gamma_k}(\omega_0^2\vee \omega_1^2)\int_s^t e^{\sigma_{k,J}(t-r)}(\|\uu_n(r,\cdot)\|_{\infty}^2+\|J_x\uu_n(r,\cdot)\|_\infty^2)dr\\
 \le& e^{c_{0,J}^+(t\!-\!s)}\bigg (\|\nabla f_k\|_\infty^2\!+\!c_{1,J}(t-s)\|\f\|_\infty^2\!+\!c_{1,J}\int_s^t \|J_x\uu_n(r,\cdot)\|_\infty^2dr\bigg ),
\end{align*}
for any $t\in J$, where $c_{0,J}=\displaystyle\max_{k=1,\ldots,m}\sigma_{k,J}$ and $c_1=\displaystyle\Big (4\min_{k=1,\ldots,m}\gamma_{k,J}\Big )^{-1}(\omega_1^2\vee \omega_2^2)$. Summing over $k$ from $1$ to $m$ we deduce that
\begin{eqnarray*}
\|J_x\uu_n(t,\cdot)\|^2_\infty\le \overline c\bigg (\|J \f\|_\infty^2+\|\f\|_\infty^2 +\int_s^t \|J_x\uu_n(r,\cdot)\|_\infty^2dr\bigg ),\qquad\;\,t\in J,
\end{eqnarray*}
 and $\overline c$ is a positive constant depending on $s$, $T$, $m$, $\omega_0$, $\omega_1$, $\gamma_k$ ($k=1,\ldots,d$) and $c_0$. Applying Gronwall lemma, we conclude the proof.
\end{proof}

\section{Invariant measures}\label{sect-inv}
In this section we prove the existence of evolution systems of invariant measures associated to $\G(t,s)$, i.e., a family of positive and finite Borel measures over $\Rd$, $\{\mu_{i,r}:\,r\in I, i=1,\ldots, m\}$ such that
\begin{equation}\label{esm}
\sum_{i=1}^m \int_{\Rd} (\G(t,s)\f)_id{\bf \mu}_{i,t}= \sum_{i=1}^m \int_{\Rd}f_id{\bf \mu}_{i,s}
\end{equation}
for any $\f\in C_b(\Rd;\Rm)$ and any $I\ni s<t$. To this aim, the results in Section \ref{comp-sect} and in particular Theorem \ref{thm_comp1} are crucial.
Here we assume that Hypotheses \ref{hyp-base}(i)-(iii) and \ref{allpos} are satisfied.

\begin{prop}
\label{prop-rustico}
Let $\{\mu_{i,r}:\,r\in I, i=1,\ldots, m\}$ be a family of nonnegative and finite Borel measures which satisfy condition \eqref{esm}. Then, all the measures of the family are either trivial or equivalent to the Lebesgue measure. As a byproduct, formula \eqref{esm} can be extended to the set of all the bounded Borel measurable functions.
\end{prop}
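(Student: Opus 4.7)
The plan is to split the argument into two steps: first extend the evolution-system relation \eqref{esm} from $C_b(\Rd;\Rm)$ to $B_b(\Rd;\Rm)$, and then combine this extension with the equivalence of the kernels $p_{ij}(t,s,x,\cdot)$ and the Lebesgue measure provided by Proposition \ref{pro} to obtain the dichotomy together with the byproduct.

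For the extension step I would set up a monotone class argument on the family $\mathcal H$ of bounded Borel functions $f\colon\Rd\to\R$ such that \eqref{esm} holds with $\f=f\e_j$ for every $j=1,\ldots,m$. By hypothesis $\mathcal H\supset C_b(\Rd)$, and the key observation is that both sides of \eqref{esm} are stable under uniformly bounded pointwise limits: the right-hand side by dominated convergence, since each $\mu_{i,s}$ is finite; the left-hand side because the representation \eqref{int_rep_1} and the finiteness of the kernels $p_{ij}(t,s,x,\cdot)$ force $(\G(t,s)\f_n)_i(x)\to(\G(t,s)\f)_i(x)$ pointwise for any uniformly bounded pointwise-convergent sequence $(\f_n)$, after which a further application of dominated convergence against the finite measure $\mu_{i,t}$ passes the limit inside the outer integral. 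The monotone class theorem then yields $\mathcal H$ equal to the space of all bounded Borel scalar functions on $\Rd$, which already delivers the byproduct claim.

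For the dichotomy I would apply the extended identity to $\f=\chi_A\e_j$ for a generic Borel set $A\subset\Rd$, obtaining
\[
\mu_{j,s}(A)=\sum_{i=1}^m\int_{\Rd} p_{ij}(t,s,x,A)\,d\mu_{i,t}(x),\qquad j=1,\ldots,m,\ s<t.
\]
When $A$ has Lebesgue measure zero, Step 3 of the proof of Proposition \ref{pro} already ensures that $p_{ij}(t,s,x,A)=0$ for every $x\in\Rd$, so each $\mu_{j,s}(A)$ vanishes; hence every $\mu_{j,s}$ is absolutely continuous with respect to the Lebesgue measure. If instead some $\mu_{j_0,s_0}$ is nontrivial, I would pick $A_0$ with $\mu_{j_0,s_0}(A_0)>0$ and read the identity with $(j,s)=(j_0,s_0)$ and any fixed $t>s_0$ in $I$ to produce an index $i_0$ with $\mu_{i_0,t}\neq 0$. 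For any Borel set $B$ of positive Lebesgue measure, Proposition \ref{pro} gives $p_{i_0 j_0}(t,s_0,\cdot,B)>0$ everywhere on $\Rd$; since $\mu_{i_0,t}$ is a non-zero finite positive measure, the corresponding summand in the displayed identity is strictly positive and hence $\mu_{j_0,s_0}(B)>0$. This is the reverse absolute continuity and, together with the previous implication, yields the equivalence of $\mu_{j_0,s_0}$ with the Lebesgue measure.

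The main obstacle is the monotone-class extension of \eqref{esm}: one has to verify that the pointwise convergence $\f_n\to\f$ transfers through \eqref{int_rep_1} to pointwise convergence of $\G(t,s)\f_n$ and then, via a second dominated convergence, through the outer integral against $\mu_{i,t}$. Once the finiteness of all the measures involved is exploited, both passages become routine and no further input beyond Proposition \ref{pro} is required.
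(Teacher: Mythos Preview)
Your monotone-class extension of \eqref{esm} to $B_b(\Rd;\Rm)$ is correct and is actually more direct than what the paper does: the paper first passes to characteristic functions of open sets via Lemma~\ref{lemma-app} and dominated convergence, then observes that both sides define finite Borel measures agreeing on open sets and hence everywhere. Your argument, using the finiteness of the kernels $p_{ij}(t,s,x,\cdot)$ and of the $\mu_{i,t}$ to justify two applications of dominated convergence, reaches the same conclusion in one stroke. The absolute-continuity direction $\mu_{j,s}\ll\text{Lebesgue}$ is also fine.

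There is, however, a genuine gap in the dichotomy. The statement, as the paper intends it and as it is used in the proof of Theorem~\ref{thm_esm}, is the family-level alternative: either \emph{all} the $\mu_{i,r}$ are trivial, or \emph{all} of them are equivalent to the Lebesgue measure. Your argument only shows that the particular nontrivial measure $\mu_{j_0,s_0}$ you fix is equivalent to Lebesgue; it does not rule out the possibility that some other $\mu_{j,s}$ vanishes identically. The paper handles this in a separate Step~1, applying \eqref{esm} to $\f=\e_j$ and using the strict positivity of $(\G(r,s)\e_j)_i$ from Proposition~\ref{pro} to propagate positivity from a single $\mu_{i,r}$ to all $\mu_{j,s}$.

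The fix in your framework is short: once you know $\mu_{j_0,s_0}\neq 0$, take $A=\Rd$ in your displayed identity. For any $j$ and any $s\le s_0$, choose $t=s_0$ and use the term $i=j_0$ together with $p_{j_0 j}(s_0,s,\cdot,\Rd)>0$ to get $\mu_{j,s}(\Rd)>0$; for $s>s_0$, choose any $t>s$ and recall that you have already produced some $\mu_{i_0,t}\neq 0$ for every $t>s_0$, and argue similarly. With all $\mu_{j,s}$ shown positive, your reverse absolute-continuity argument applies verbatim to every pair $(j,s)$, and the proof is complete.
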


\begin{proof}
We assume that the measures of the family are not all the trivial measure. Thus, we can fix $i\in\{1,\ldots,m\}$ and $r\in I$ such that $\mu_{i,r}(\Rd)>0$. To improve the readability, we split the proof into two steps.

{\em Step 1.} Here, we prove that the measures of the family are all positive. We begin by fixing $j\in\{1,\ldots, m\}$, $s\in I$ smaller than $r$.
Writing formula \eqref{esm} with $\f={\bf e}_j$ gives
\begin{equation}
\mu_{j,s}(\Rd)=\int_{\Rd}d\mu_{j,s}=\sum_{k=1}^m\int_{\Rd}(\G(r,s){\bf e}_j)_kd\mu_{k,r}\ge\int_{\Rd}(\G(r,s){\bf e}_j)_id\mu_{i,r}.
\label{star-1}
\end{equation}
Since the function $\G(r,s){\bf e}_j$ is strictly positive in $\Rd$, thanks to Proposition \ref{pro}, and $\mu_{i,r}$ is a positive measure,
it follows immediately that the last side of \eqref{star-1} is positive as well. Hence, $\mu_{j,s}(\Rd)$ is positive as it has been claimed.

Next, we fix $s_1<r$ and use again formula \eqref{esm} to write
\begin{eqnarray*}
\sum_{k=1}^m\int_{\Rd}(G(2r,s_1){\bf e}_j)_kd\mu_{k,2r}=\mu_{j,s_1}(\Rd)>0.
\end{eqnarray*}
Since $(G(2r,s_1){\bf e}_j)_k>0$ in $\Rd$ for any $k\in\{1,\ldots,d\}$, there should exist an index $k_0$ such that
$\mu_{k_0,2r}(\Rd)>0$. Hence, the same argument used above with $(k_0,2r)$ replacing $(i,r)$ shows that $\mu_{j,s}$ is a positive measure for any $s<2r$. Iterating this argument, we can prove that all the measures of the family are positive.

{\em Step 2.} To prove that the measures $\mu_{j,t}$ ($j=1,\ldots,m$, $t\in I$) are equivalent to the Lebesgue measure, we need to extend the validity of
\eqref{esm} to the case when $\f=\chi_A{\bf e}_j$ ($j=1,\ldots,m$) and $A$ is a Borel subset of $\Rd$.
For this purpose, we begin by assuming that $A$ is an open set and denote by $(\theta_n)$ a sequence of
continuous functions converging to $\chi_A$ pointwise in $\Rd$ and such that $0\le\theta_n\le 1$ for any $n\in\N$ (see Lemma \ref{lemma-app}). By the last part of Theorem \ref{teo}, we know that $\G(t,s)(\vartheta_n{\bf e}_j)$ converges to $\G(t,s){\bf e}_j$ as $n\to +\infty$, for any $I\ni s<t$, and $\|\G(t,s)(\vartheta_n{\bf e}_j)\|_{\infty}\le 1$. Therefore, writing \eqref{esm} with $\f=\vartheta_n{\bf e}_j$ and letting $n$ tend to $+\infty$, we conclude that
\begin{eqnarray*}
\sum_{k=1}^m\int_{\Rd}(\G(t,s)(\chi_A{\bf e}_j))_kd\mu_{k,t}=\mu_{j,s}(A).
\end{eqnarray*}
We now observe that the function $\nu_t$, defined by
\begin{equation*}
\nu_t(A)=\sum_{k=1}^m\int_{\Rd}(\G(t,s)(\chi_A{\bf e}_j))_kd\mu_{k,t}
\end{equation*}
for any Borel set $A$, is a nonnegative measure since $\G(t,s)(\chi_A{\bf e}_j)\ge 0$ for any Borel set $A$.
Moreover, it agrees with $\mu_{j,s}$ on the open sets of $\Rd$, which generate the $\sigma$-algebra of all the Borel subsets of $\Rd$. Hence,
$\mu_{j,s}$ and $\nu_t$ are actually the same measure and it follows that
\begin{eqnarray*}
\sum_{k=1}^m\int_{\Rd}(\G(t,s)(\chi_A{\bf e}_j))_kd\mu_{k,t}=\mu_{j,s}(A),\qquad\;\,I\ni s<t,\;\,j=1,\ldots,m,
\end{eqnarray*}
for any Borel set $A$, as it has been claimed.
From this formula the equivalence of the Lebsegue measure and each measure $\mu_{j,s}$ follows. Indeed,
since the measures $\mu_{k,t}$ and $\mu_{j,s}$ are positive and the function $\G(t,s)(\chi_A{\bf e}_j)$ is nonnegative, it easy to infer that
$\mu_{j,s}(A)=0$ if and only if $(\G(t,s)(\chi_A{\bf e}_j))_k=0$ in $\Rd$ for any $k=1,\ldots,m$. But, since each measure $p_{kh}(s+1,s,x,dy)$ is positive and equivalent to the Lebesgue measure (see again Proposition \ref{pro}), this is the case if and only if $A$ has zero Lebesgue measure.

To complete the proof, it suffices to observe that for any bounded Borel measurable function $\f$ there exists a sequence $(\f_n)$ of bounded and continuous functions converging to $\f$ almost everywhere (with respect to the Lebesgue measure and, hence, with respect to each measure $\mu_{j,t}$ of the family) as $n$ tends to $+\infty$.
Clearly,
\begin{eqnarray*}
\lim_{n\to +\infty}\sum_{k=1}^m\int_{\Rd}f_{n,k}d\mu_{k,s}
=\sum_{k=1}^m\int_{\Rd}f_kd\mu_{k,s}
\end{eqnarray*}
and the sequence $(\G(t,s)\f_n)$ is bounded in $C_b(\Rd)$ and converges to $\G(t,s)\f$ pointwise in $\Rd$.
Thus, writing \eqref{esm} with $\f$ being replaced by $\f_n$ and letting $n$ tend to $+\infty$, we extend the validity of such a formula to $\f\in B_b(\Rd;\Rm)$.
\end{proof}

\begin{lemm}\label{pos}
The following properties hold true:
\begin{enumerate}[\rm (i)]
\item
Under Hypothesis $\ref{hyp-base}$ and $\ref{Lya2}$, if there exist $j\in \{1,\ldots, m\}$ and a positive function $g \in C_b(\Rd)\cap C^2(\Rd)$ such that $(\mathcal{A}_j(t)g)(x)+c_{jj}(t,x)g(x) \ge 0$ for any $t \in I$ and $x \in \Rd$, then $(\G(\cdot,s)g\e_j)_j\ge g_j$ in $(s,+\infty)\times\Rd$;
\item
Under Hypotheses $\ref{hyp-base}$, assume further that $\sum_{j=1}^mc_{ij}\le 0$ on $\Rd$ for every
$i=1,\ldots,m$ and that there exist a positive function $\g\in C_b(\Rd;\Rm)\cap C^2(\Rd;\Rm)$ such that $\A(t)\g\ge 0$  in $\Rd$ for any $t\in I$. Then, $\G(t,s)\g\ge \g$ in $\Rd$ for any $t>s\in I$.
    \end{enumerate}
\end{lemm}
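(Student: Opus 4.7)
Both parts reduce to a direct application of the maximum principle of Theorem \ref{maxprinc} to a suitable auxiliary vector-valued function. The key observation is that, under the standing assumption of Section \ref{sect-inv} (namely Hypothesis \ref{allpos}, which gives $c_{kj}\ge 0$ for $k\neq j$), together with the hypothesis that the row sums of $C$ are nonpositive (provided by \ref{Lya2}(ii) in part (i) and by the extra assumption in part (ii)), the hypotheses of Theorem \ref{maxprinc} are satisfied.

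For part (i), the plan is to set $\ww(t,\cdot):=g\e_j-\G(t,s)(g\e_j)$ on $[s,T]\times\Rd$ for arbitrary $T>s$. Since $\G(\cdot,s)(g\e_j)$ belongs to $C_b([s,T]\times\Rd;\Rm)\cap C^{1+\alpha/2,2+\alpha}_{\rm loc}((s,+\infty)\times\Rd;\Rm)$ by Theorem \ref{bbthe} and $g\e_j$ is bounded and $C^2$ in space (and time-independent), the function $\ww$ has the regularity required to invoke Theorem \ref{maxprinc}. Clearly $\ww(s,\cdot)=\mathbf{0}$. Computing componentwise and using that $\G(t,s)(g\e_j)$ solves the Cauchy problem, one gets
\begin{eqnarray*}
D_tw_k-(\A(t)\ww)_k=-(\A(t)(g\e_j))_k,\qquad k=1,\ldots,m.
\end{eqnarray*}
For $k=j$ this equals $-(\mathcal{A}_j(t)g+c_{jj}g)\le 0$ by assumption, while for $k\neq j$ it equals $-c_{kj}g\le 0$ because $c_{kj}\ge 0$ (Hypothesis \ref{allpos}) and $g>0$. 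Theorem \ref{maxprinc} then yields $\ww\le\mathbf{0}$ in $[s,T]\times\Rd$, and reading off the $j$-th component gives $(\G(t,s)(g\e_j))_j\ge g$ for every $t\in (s,T]$; the arbitrariness of $T$ concludes the proof of (i).

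For part (ii), the plan is entirely analogous: set $\ww(t,\cdot):=\g-\G(t,s)\g$, which again satisfies the regularity required by Theorem \ref{maxprinc} and $\ww(s,\cdot)=\mathbf{0}$. One computes
\begin{eqnarray*}
D_tw_k-(\A(t)\ww)_k=-(\A(t)\g)_k\le 0,\qquad k=1,\ldots,m,
\end{eqnarray*}
by hypothesis. The extra assumption $\sum_{j=1}^m c_{ij}\le 0$ together with Hypothesis \ref{allpos} allow one to apply Theorem \ref{maxprinc}, obtaining $\ww\le\mathbf{0}$ and therefore $\G(t,s)\g\ge\g$ for every $t>s$.

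There is no genuine obstacle in this argument: the only thing to verify carefully is that the perturbations $g\e_j$ and $\g$ have enough regularity and boundedness for the difference $\ww$ to land in the class $C_b([s,T]\times\Rd;\Rm)\cap C^{1,2}((s,T]\times\Rd;\Rm)$ required by Theorem \ref{maxprinc}, which is immediate from the hypotheses.
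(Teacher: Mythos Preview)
Your proof is correct. For part (ii) it coincides with the paper's argument (apply Theorem \ref{maxprinc} to $\g-\G(\cdot,s)\g$). For part (i), however, you take a genuinely different route: you treat the full vector $\ww=g\e_j-\G(\cdot,s)(g\e_j)$ and feed it into the vector-valued maximum principle (Theorem \ref{maxprinc}), using Hypothesis \ref{allpos} to ensure the off-diagonal right-hand sides $-c_{kj}g$ are nonpositive. The paper instead isolates the $j$-th component, sets $v_j:=(\G(\cdot,s)g\e_j)_j-g$, checks the \emph{scalar} differential inequality $D_tv_j\ge(\mathcal A_j(t)+c_{jj})v_j$ (which implicitly uses $c_{jl}\ge 0$ and the positivity of $\G$ to control the cross terms $\sum_{l\neq j}c_{jl}(\G(\cdot,s)g\e_j)_l$), and then invokes a scalar maximum principle for $\mathcal A_j+c_{jj}$, with Hypothesis \ref{Lya2}(i) providing the required Lyapunov function. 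Your approach is more uniform---the same tool (Theorem \ref{maxprinc}) handles both parts---and in fact does not use Hypothesis \ref{Lya2}(i) at all, only \ref{Lya2}(ii); the paper's scalar route, on the other hand, makes explicit where the Lyapunov condition for the individual operator $\mathcal A_j$ enters.
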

\begin{proof}
(i) A direct computation reveals that the function $v_j:= (\G(\cdot,s)g\e_j)_j- g$ belongs to $C^{1,2}((s,+\infty)\times \Rd)\cap C([s,+\infty)\times \Rd)$ and
solves the problem
\begin{equation*}
\left\{\begin{array}{ll}
D_t v_j(t,x)\ge (\mathcal{A}_j(t)v_j)(x) + c_{jj}(t,x)v_j(t,x),\qquad\;\, &(t,x)\in (s,+\infty)\times \Rd,\\
v_j(s,x)=0,\qquad\;\, &x\in \Rd.
\end{array}
\right.
\end{equation*}
Observing that Hypothesis \ref{Lya2} yields the existence of a Lyapunov function for the operator $\mathcal A_j$ (hence for $\mathcal A_j+c_{jj}$) and invoking a generalization of the classical maximum principle (see \cite[Proposition 2.2]{AngLor10Com}) we deduce that $v_j \ge 0$ in $(s,+\infty)\times \Rd$ and we are done.\\
(ii) The claim can be obtained immediately just applying the maximum principle in Proposition \ref{maxprinc} to the function $\vv=\G(\cdot,s)\g-\g$.
\end{proof}

\begin{thm}\label{thm_esm}
Under Hypotheses $\ref{Lya2}$, $\ref{Lya3}$, if $c_{ij}\ge 0$ for every $i,j\in\{1,\ldots,m\}$, with $i\neq j$ and the hypotheses of Lemma $\ref{pos}(i)$ or $(ii)$ hold true, then there exists an evolution system of measures associated with the evolution operator $\G(t,s)$. Each measure of this system is positive and equivalent to the Lebesgue one.
\end{thm}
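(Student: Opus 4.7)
My plan is to construct the family $\{\mu_{j,s}:j=1,\ldots,m,\,s\in I\}$ by a Krylov--Bogolyubov--type procedure, exploiting the tightness furnished by Corollary \ref{tight-uni} and using Lemma \ref{pos} to rule out degeneracy. First I fix a point $x_0\in\Rd$ and, for every $T\in I$ with $T>t_0$ (with $t_0$ as in Hypothesis \ref{Lya3}) and every $s<T$, I introduce the nonnegative finite Borel measures
\begin{eqnarray*}
\mu^T_{j,s}(A):=\sum_{i=1}^{m}p_{ij}(T,s,x_0,A),\qquad j=1,\ldots,m,\;A\in\mathcal{B}(\Rd).
\end{eqnarray*}
The evolution law $\G(T,s)=\G(T,t)\G(t,s)$ combined with the representation \eqref{int_rep_1} gives at once, for every $\f\in C_b(\Rd;\Rm)$ and every $s<t<T$, the pre-evolution identity
\begin{eqnarray*}
\sum_{j=1}^m\int_{\Rd}(\G(t,s)\f)_j\,d\mu^T_{j,t}=\sum_{i=1}^m(\G(T,s)\f)_i(x_0)=\sum_{j=1}^m\int_{\Rd}f_j\,d\mu^T_{j,s},
\end{eqnarray*}
so only a limit in $T$ remains.

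Next I would establish two uniform-in-$T$ bounds. Hypothesis \ref{Lya2}(ii) gives $C\bm{\one}\le\bm{0}$, and applying Theorem \ref{maxprinc} to $\uu=\G(\cdot,s)\bm{\one}-\bm{\one}$ yields $\G(T,s)\bm{\one}\le\bm{\one}$, so that $\sum_j\mu^T_{j,s}(\Rd)\le m$ for every $T>s$. For tightness, Corollary \ref{tight-uni} ensures $\sup_{T>s}p_{ij}(T,s,x_0,\Rd\setminus B_r)\to0$ as $r\to+\infty$ for every $s\ge t_0$. I then pick a countable dense subset $D$ of $I\cap[t_0,+\infty)$, a sequence $T_n\to\sup I$ and, via Prokhorov's theorem together with a standard diagonal extraction over $D\times\{1,\ldots,m\}$, I pass to a subsequence (not relabeled) along which $\mu^{T_n}_{j,s}$ converges weakly to a nonnegative finite measure $\mu_{j,s}$ for every $s\in D$ and every $j$. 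Since $\G(t,s)\f\in C_b(\Rd;\Rm)$ by Theorem \ref{bbthe}, and tightness upgrades convergence against $C_0(\Rd)$ to convergence against $C_b(\Rd)$, I can pass to the limit in the pre-evolution identity to obtain \eqref{esm} for $s,t\in D$. For $s\in I\setminus D$ I define
\begin{eqnarray*}
\mu_{j,s}(A):=\sum_{i=1}^m\int_{\Rd}(\G(t,s)(\chi_A\e_j))_i\,d\mu_{i,t},\qquad A\in\mathcal{B}(\Rd),
\end{eqnarray*}
for any $t\in D$ with $t>s$ (such $t$ exists since $D$ is dense in the unbounded set $I\cap[t_0,+\infty)$), where $\G(t,s)$ is extended to $B_b(\Rd;\Rm)$ by Theorem \ref{teo}; positivity of $\G(t,s)$, positivity of $\mu_{i,t}$ and dominated convergence make each $\mu_{j,s}$ a nonnegative countably additive Borel measure, the strong Feller property $\G(t_1,s)(\chi_A\e_j)\in C_b$ together with the cocycle relation and \eqref{esm} on $D$ makes it independent of the choice of $t$, and the full identity \eqref{esm} on $I$ then propagates automatically.

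Nontriviality comes from Lemma \ref{pos}. Under the assumptions of part (i), with $j_0$ and $g>0$ as provided there,
\begin{eqnarray*}
\mu^{T_n}_{j_0,s}(g)=\sum_i(\G(T_n,s)(g\e_{j_0}))_i(x_0)\ge(\G(T_n,s)(g\e_{j_0}))_{j_0}(x_0)\ge g(x_0)>0,
\end{eqnarray*}
and tightness combined with $g\in C_b(\Rd)$ lets me pass to the weak limit to obtain $\mu_{j_0,s}(g)\ge g(x_0)>0$; under the assumptions of part (ii), the analogous computation with $\G(t,s)\g\ge\g$ gives $\sum_j\mu_{j,s}(g_j)\ge\sum_j g_j(x_0)>0$. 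In either case the family is not identically trivial, so Proposition \ref{prop-rustico} delivers the positivity of every $\mu_{j,s}$ and its equivalence with the Lebesgue measure. The main obstacle is the asymmetry between the time direction in which \eqref{esm} is formulated and the direction in which the Lyapunov/tightness machinery is available: Corollary \ref{tight-uni} provides tightness of $\{\mu^T_{j,s}\}_T$ only for $s\ge t_0$, so weak limits can be extracted directly only on a countable dense subset of $I\cap[t_0,+\infty)$ and the measures at earlier (or intermediate non-dense) times must be recovered through the evolution operator itself. Handling this extension consistently, and the upgrade of the weak convergence from $C_0$ to $C_b$ that is needed in order to admit the non-compactly-supported test functions $\G(t,s)\f$, are the core technical points.
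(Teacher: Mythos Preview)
Your argument is correct and takes a genuinely different route from the paper. The paper follows the classical Krylov--Bogolyubov paradigm: it fixes an observation index $j$ and integer starting times $n$, forms the \emph{Ces\`aro averages}
\begin{eqnarray*}
p^{r,j}_{i,n}(A)=\frac{1}{r-n}\int_n^r p_{ji}(\tau,n,x_0,A)\,d\tau,
\end{eqnarray*}
extracts weak$^*$ limits $\mu^j_{i,n}$ via Prokhorov and a diagonal argument, and then verifies the invariance identity by showing that the boundary terms arising from shifting the averaging window (the three limits in \eqref{uscira}) are negligible. You instead take direct \emph{snapshots} $\mu^T_{j,s}=\sum_i p_{ij}(T,s,x_0,\cdot)$ and observe that the evolution law makes the identity
\begin{eqnarray*}
\sum_j\int(\G(t,s)\f)_j\,d\mu^T_{j,t}=\sum_i(\G(T,s)\f)_i(x_0)=\sum_j\int f_j\,d\mu^T_{j,s}
\end{eqnarray*}
hold \emph{exactly} for every finite $T$, so after the diagonal extraction in $T$ nothing remains to be estimated. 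Both proofs rest on the same pillars: Corollary~\ref{tight-uni} for tightness, Lemma~\ref{pos} for nontriviality, and Proposition~\ref{prop-rustico} for the passage from one nontrivial measure to positivity and equivalence with Lebesgue of the whole family; and both extend from a countable skeleton (integers for the paper, your dense set $D$) to all of $I$ by the same push-forward formula through $\G$. What your approach buys is a cleaner passage to the limit, since no averaging error terms appear; what the paper's approach buys is the robustness of Ces\`aro means, which would still converge in situations where the snapshots oscillate. Your handling of the well-posedness of the extension via the strong Feller property (so that $\G(t_1,s)(\chi_A\e_j)\in C_b$ and the $C_b$-invariance on $D$ suffices) is exactly what is needed and mirrors the paper's reasoning.
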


\begin{proof}
We fix $j\in\{1,\ldots,m\}$, $x_0\in\Rd$, $n\in\N$ and, for any $r\in\N$ with $r>n$, we consider the family of measures $\{p^{r,n}_i: r>n,\, i=1,\ldots,m\}$ defined by
\begin{eqnarray*}
p^{r,j}_{i,n}(A)=\frac{1}{r-n}\int_n^rp_{ji}(\tau,n,x_0,A)d\tau,\qquad\;\, A\in {\mathcal B}(\Rd).
\end{eqnarray*}

By Corollary \ref{tight-uni}, each family $\{p^{r,j}_{i,n}: r>n\}$ is tight. Therefore, we can invoke a generalization of Prokhorov's theorem (see e.g., \cite[Theorem 8.6.2]{bogachev}) to infer that, up to a subsequence, $\{p^{r,j}_{i,n}: r>n\}$ weakly$^*$ converges to some measure $\mu^j_{i,n}$ as $r \to +\infty$, i.e.,
\begin{eqnarray*}
\lim_{r\to+\infty}\frac{1}{r-n}\int_n^r(\G(\tau,n)(f{\bf e}_i))_j(x_0)d\tau=\lim_{r\to+\infty}\int_{\Rd}fdp^{r,j}_{i,n}=\int_{\Rd}fd\mu^j_{i,n}
\end{eqnarray*}
for any $f\in C_b(\Rd)$.

By a diagonal argument, we can extract an increasing sequence $(r_k)$ of integers such that
$p^{r_k,j}_{i,n}$ weakly$^*$ converges to $\mu^j_{i,n}$ as $k$ tends to $+\infty$, for each $n\in\N$. As a byproduct, we can infer that
\begin{align}\label{app_g}
\lim_{k\to+\infty}\frac{1}{r_k-n}\int_n^{r_k}(\G(\tau,n)\f)_j(x_0)d\tau=\sum_{i=1}^m\int_{\Rd}f_id\mu^j_{i,n},\qquad\;\,\f\in C_b(\Rd;\Rm).
\end{align}
Writing formula \eqref{app_g} with $\f$ being replaced by $g \e_l$ (resp. $\g$), if the assumptions of Lemma \ref{pos}(i) (resp. (ii)) are satisfied, yields immediately that $\mu^j_{l,n}$ is not the trivial measure. Indeed in the first case $\liminf_{k\to +\infty}(r_k-n)^{-1}\int_n^{r_k}(\G(\tau,n)(g \e_l))_j(x_0)d\tau>0$  and in the second one $\liminf_{k\to +\infty}(r_k-n)^{-1}\int_n^{r_k}(\G(\tau,n)\g)_j(x_0)d\tau>0$ for any $l=1, \ldots, m$.
Moreover, for any $\f\in C_b(\Rd;\Rm)$ and $h,n\in\N$ with $h>n$ we can write
\begin{align}
\sum_{i=1}^m\int_{\Rd}(\G(h,n)\f)_id\mu^j_{i,h}=&\lim_{k\to+\infty}\frac{1}{r_k-h}\int_h^{r_k}(\G(\tau,h)\G(h,n)\f)_j(x_0)d\tau\notag\\
=&\lim_{k\to+\infty}\frac{1}{r_k-h}\int_h^{r_k}(\G(\tau,n)\f)_j(x_0)d\tau\notag\\
=&\lim_{k\to+\infty}\frac{1}{r_k-n}\int_n^{r_k}(\G(\tau,n)\f)_j(x_0)d\tau\notag\\
-&\lim_{k\to +\infty}\frac{1}{r_k-n}\int_n^{h}(\G(\tau,n)\f)_j(x_0)d\tau\notag\\
+&\lim_{k\to +\infty}\frac{h-n}{(r_k-h)(r_k-n)}\int_h^{r_k}(\G(\tau,n)\f)_j(x_0)d\tau\notag\\
=&\sum_{i=1}^m\int_{\Rd}f_id\mu^j_{i,n}.
\label{uscira}
\end{align}

Now, we define the measures $\mu^j_{i,s}$ also for non integer values of $s$. For this, purpose, we set
\begin{align*}
\mu^j_{i,s}(A)=\sum_{k=1}^m\int_{\Rd}(\G(n,s)(\chi_A{\bf e}_i))_kd\mu^j_{k,n},\qquad\;\,A\in {\mathcal B}(\Rd),
\end{align*}
where $n$ is any integer larger than $s$. It is straightforward to check that $\mu^j_{i,s}$ is a nonnegative measure
and that
\begin{align*}
\int_{\Rd}fd\mu^j_{i,s}=\sum_{k=1}^m\int_{\Rd}(\G(n,s)(f{\bf e}_i))_kd\mu^j_{k,n}
\end{align*}
for any $f\in C_b(\Rd)$, so that
\begin{align*}
\sum_{i=1}^m\int_{\Rd}f_id\mu^j_{i,s}=\sum_{k=1}^m\int_{\Rd}(\G(n,s)\f)_kd\mu^j_{k,n},\qquad\;\,\f\in C_b(\Rd;\Rm).
\end{align*}
Note that the above definition is independent of the choice of $n>s$. Indeed, if $p$ is another integer larger than $s$ (to fix the ideas we suppose that $p>n$) then splitting $\G(p,s)(\chi_A{\bf e}_i)=\G(p,n)\G(n,s)(\chi_A{\bf e}_i)$ and using \eqref{uscira}, we conclude that
\begin{align*}
\sum_{k=1}^m\int_{\Rd}(\G(p,s)(\chi_A{\bf e}_i))_kd\mu^j_{k,p}
=\sum_{k=1}^m\int_{\Rd}(\G(n,s)(\chi_A{\bf e}_i))_kd\mu^j_{k,n},
\end{align*}
which shows that the measure $\mu^j_{i,s}$ is well defined.

To prove the invariance of the system $\{\mu^j_{i,s}: s\in I, i=1,\ldots,m\}$, we fix $t>s\in I$, $n>t$ and observe that
\begin{align*}
\sum_{k=1}^m\int_{\Rd}f_kd\mu^j_{k,s}=&\sum_{k=1}^m\int_{\Rd}(\G(n,s)\f)_kd\mu^j_{k,n}
=\sum_{k=1}^m\int_{\Rd}(\G(n,t)\G(t,s)\f)_kd\mu^j_{k,n}\\
=&\sum_{k=1}^m\int_{\Rd}(\G(t,s)\f)_kd\mu^j_{k,t}
\end{align*}
for any $\f\in C_b(\Rd;\Rm)$.

The equivalence of each measure $\mu^j_{i,s}$ with respect to the Lebesgue measure and its positivity are immediate consequence of Proposition \ref{prop-rustico}. Indeed it suffices to observe that the evolution system of measures $\{\mu^j_{i,s}: i=1,\ldots,m,\, s\in I\}$ contains at least a non trivial measure.
\end{proof}

\subsection{The evolution operator $\bm{\G(t,s)}$ in $\bm{L^p}$-spaces}
In this subsection, we prove that the evolution operator $\G(t,s)$ can be extended, with a bounded semigroup in the $L^p$-spaces related to evolution system of measures and, in the autonomous case, assuming compactness in $C_b(\Rd;\Rm)$ we prove compactness in these $L^p$-spaces too.

Here, we consider $\{\mu_{i,t}: t\in I,\, i=1,\ldots,m\}$ which is any evolution system of measures associated to $\G(t,s)$.
Moreover, for any $p\in [1,+\infty)$, we write $L^p_{{\bm\mu}_t}(\Rd;\Rm)$ to denote the set $\bigotimes_{i=1}^mL^p_{\mu_{i,t}}(\Rd)$, which we endow with
the natural norm $\f\mapsto\left (\sum_{i=1}^m\int_{\Rd}|f_i|^pd\mu_{i,t}\right )^{1/p}=:\|\f\|_{L^p_{{\bm\mu}_t}}$.
For $p=\infty$, the space $L^\infty_{{\bm\mu}_t}(\Rd;\Rm)$ denotes the set of all $\mu_t$-essentially bounded functions $\f$ with norm $\|f\|_{L^\infty_{{\bm\mu}_t}(\Rd;\Rm)}=\max_{k=1,\ldots, m}{\rm ess sup}_{x\in \Rd}|f_k(x)|$. Note that, in view of Proposition \ref{prop-rustico}, the measures $\mu_{i,t}$ ($t\in I$ and $i=1,\ldots,m$) are all equivalent to the Lebesgue measure. Thus, the Lebesgue space $L^\infty(\Rd;\Rm)$ equals to $L^\infty_{{\bm\mu}_t}(\Rd;\Rm)$ for any $t\in I$.


\begin{prop}
Each $\G(t,s)$ can be extended with a bounded operator mapping $L^p_{{\bm\mu}_s}(\Rd;\Rm)$ into $L^p_{{\bm\mu}_t}(\Rd;\Rm)$ for any $1\le p<+\infty$ which satisfies the estimate
\begin{align}
\label{festa}
\|\G(t,s)\|_{\mathcal L(L^p_{{\bm\mu}_s}(\Rd;\Rm),L^p_{{\bm\mu}_t}(\Rd;\Rm))}\leq (2e^{K(t-s)})^{\frac{p-1}{p}}, \qquad\;\, t>s,
\end{align}
for any $p \in [1, +\infty)$, where $K$ is defined in \eqref{moto}.
\end{prop}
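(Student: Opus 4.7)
The plan is to prove the estimate at the two endpoints $p=1$ and $p=\infty$ and then invoke the Riesz--Thorin interpolation theorem. Before doing so, I would extend $\G(t,s)$ densely. Since each $\mu_{i,s}$ is equivalent to the Lebesgue measure by Proposition \ref{prop-rustico}, the space $C_b(\Rd;\Rm)\cap L^p_{\bm\mu_s}(\Rd;\Rm)$ (and even $C_c$) is dense in $L^p_{\bm\mu_s}(\Rd;\Rm)$ for every $p\in[1,+\infty)$, so once the desired norm bound holds on continuous bounded functions it extends by density to the whole $L^p_{\bm\mu_s}$.

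For the $L^1$-endpoint I would exploit the positivity of $\G(t,s)$ (Proposition \ref{pro}) together with the invariance property \eqref{esm}. Writing $\f=\f^+-\f^-$ with both pieces nonnegative and using positivity componentwise gives
\begin{equation*}
|(\G(t,s)\f)_i|\le (\G(t,s)\f^+)_i+(\G(t,s)\f^-)_i=(\G(t,s)|\f|)_i,\qquad i=1,\ldots,m.
\end{equation*}
Summing, integrating against $\mu_{i,t}$, and applying \eqref{esm} (which extends to Borel bounded data by Proposition \ref{prop-rustico}; a further truncation/monotone convergence step handles $L^1$ data) yields
\begin{equation*}
\sum_{i=1}^m\int_{\Rd}|(\G(t,s)\f)_i|d\mu_{i,t}\le \sum_{i=1}^m\int_{\Rd}(\G(t,s)|\f|)_id\mu_{i,t}=\sum_{i=1}^m\int_{\Rd}|f_i|d\mu_{i,s},
\end{equation*}
that is, $\|\G(t,s)\|_{\mathcal L(L^1_{\bm\mu_s},L^1_{\bm\mu_t})}\le 1$.

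For the $L^\infty$-endpoint the same pointwise bound $|(\G(t,s)\f)_i|\le (\G(t,s)|\f|)_i$ together with $|\f|\le \|\f\|_{L^\infty_{\bm\mu_s}}\bm{\one}$ (componentwise, almost everywhere) gives $|(\G(t,s)\f)_i|\le \|\f\|_{L^\infty_{\bm\mu_s}}(\G(t,s)\bm{\one})_i$. The required control $(\G(t,s)\bm{\one})_i\le 2e^{K(t-s)}$ can be derived from estimate \eqref{moto}: indeed, from the integral representation \eqref{int_rep} of any solution and the componentwise contractivity of the evolution operator $\G_0(t,s)$ associated with $\A_0=\A-\overline C$, a Gronwall argument applied componentwise produces the desired bound. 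Consequently
\begin{equation*}
\|\G(t,s)\|_{\mathcal L(L^\infty_{\bm\mu_s},L^\infty_{\bm\mu_t})}\le 2e^{K(t-s)}.
\end{equation*}

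With the two endpoint estimates in hand, the conclusion follows from the Riesz--Thorin theorem (applied componentwise in the product $L^p$ structure): for $1<p<+\infty$,
\begin{equation*}
\|\G(t,s)\|_{\mathcal L(L^p_{\bm\mu_s},L^p_{\bm\mu_t})}\le \|\G(t,s)\|_{\mathcal L(L^1_{\bm\mu_s},L^1_{\bm\mu_t})}^{1/p}\|\G(t,s)\|_{\mathcal L(L^\infty_{\bm\mu_s},L^\infty_{\bm\mu_t})}^{1-1/p}\le (2e^{K(t-s)})^{(p-1)/p},
\end{equation*}
which is precisely \eqref{festa}. I expect the main technical point to be the $L^\infty$ bound: while the $L^1$ part is a clean consequence of positivity plus invariance, the $L^\infty$ part requires a componentwise refinement of \eqref{moto} rather than the vector $\|\cdot\|_\infty$ estimate stated there, because the latter carries an extra factor of $\sqrt m$ that would spoil the target constant.
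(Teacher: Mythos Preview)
Your route is genuinely different from the paper's. The paper does \emph{not} interpolate: it works directly at each $p$ by applying Jensen's inequality to the kernel representation \eqref{int_rep_1}. From $p_{ik}(t,s,x,\Rd)=(\G(t,s)\e_k)_i(x)\le e^{K(t-s)}$ it gets the pointwise bound
\[
|(\G(t,s)\f)_i|^p\le c_m^{p-1}e^{K(p-1)(t-s)}\,(\G(t,s)(|f_1|^p,\ldots,|f_m|^p))_i,
\]
then sums over $i$, integrates against $\mu_{i,t}$, and uses invariance \eqref{esm} once. Your $L^1$ endpoint is precisely the $p=1$ instance of this computation, and it is perfectly correct.

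Where the two approaches diverge is the second ingredient. Your $L^\infty$ endpoint has a genuine gap: the ``componentwise Gronwall'' you invoke from \eqref{int_rep} does \emph{not} reproduce the constant $2e^{K(t-s)}$. Running Gronwall in the max-over-components norm yields $\max_i\|(\G(t,s)\bm{\one})_i\|_\infty\le e^{\|\overline C\|_{\ell^\infty\to\ell^\infty}(t-s)}$, where $\|\overline C\|_{\ell^\infty\to\ell^\infty}$ is the maximum absolute row sum; there is no reason this coincides with the $K=\|\overline C\|$ fixed in \eqref{moto}, and no mechanism produces the prefactor $2$. If instead you simply translate \eqref{moto} to the $L^\infty_{\bm\mu}$ norm via norm equivalence you pick up $\sqrt m$, as you already note. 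Either way, interpolation gives a bound of the correct shape $(c\,e^{\tilde K(t-s)})^{(p-1)/p}$, but not with the stated explicit constants.

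It is worth remarking that the paper's own proof seems to have the same defect in the constant: the first displayed inequality there uses $2^{p-1}$ where the $m$-term convexity estimate actually gives $m^{p-1}$, so the $2$ in \eqref{festa} is only literally correct for $m\le 2$. In short: your interpolation strategy is valid and arguably cleaner (it isolates the use of invariance at $p=1$), but you should not claim the specific constant $2e^{K(t-s)}$ at the $L^\infty$ endpoint; state the bound you can actually prove and accept that the resulting interpolated constant differs from the one printed in \eqref{festa}.
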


\begin{proof}
Since $\|\G(t,s)\e_k\|_{\infty}\leq e^{K(t-s)}$, it follows that $p_{ik}(t,s,x,\Rd)\leq e^{K(t-s)}$ for any $i,k=1,\ldots,m$, $t>s\in I$ and $x\in\Rd$.
Thus, the Jensen inequality and formula \eqref{int_rep_1} yield
\begin{align*}
|(\G(t,s)\f)_i(x)|^p
\leq & 2^{p-1}\sum_{k=1}^m\left|\int_{\Rd}f_k(y)p_{ik}(t,s,x,dy)\right|^p\\
\leq & 2^{p-1}\sum_{k=1}^m [p_{ik}(t,s, x, \Rd)]^{p-1}\int_{\Rd}\left|f_k(y)\right|^pp_{ik}(t,s,x,dy) \\
\leq & 2^{p-1}e^{K(p-1)(t-s)}(\G(t,s)(|f_1|^p, \ldots,|f_m|^p ))_i(x)
\end{align*}
for any $t>s, x\in \Rd$, $i=1, \ldots, m$, $\f\in C_b(\Rd;\R^m)$ and $p \in[1,+\infty)$. Moreover, from the invariance property \eqref{esm}, we deduce that
\begin{align*}
\sum_{i=1}^m\int_{\Rd}|(\G(t,s)\f)_i|^pd\mu_{i,t}
\leq & 2^{p-1}e^{K(p-1)(t-s)}\sum_{i=1}^m\int_{\Rd}(\G(t,s)(|f_1|^p, \ldots,|f_m|^p ))_id\mu_{i,t} \\
=& 2^{p-1}e^{K(p-1)(t-s)}\sum_{i=1}^m\int_{\Rd}|f_i|^pd\mu_{i,s}
\end{align*}
for any $t>s$ and $\f\in C_b(\Rd;\Rm)$. Since the measures $\mu_{i,t}$ ($i=1, \ldots,m$, $t\in I$) are finite Borel measures, the space $C_b(\Rd;\Rm)$ is dense in $L^p_{{\bm\mu}_t}(\Rd;\Rm)$ for any $p \in[1,+\infty)$ and $t\in I$.
(see \cite[Remark 1.46]{AFP}), hence, from the previous chain of inequalities we easily deduce that $\G(t,s)$ extends
to a linear bounded operator from $L^p_{\bm\mu_s}(\Rd;\Rm)$ into $L^p_{\bm\mu_t}(\Rd;\Rm)$ and formula \eqref{festa} follows. The evolution property easily follows. Hence, $\G(t,s)$ is an evolution operator from $L^p_{\bm\mu_s}(\Rd;\Rm)$ into $L^p_{\bm\mu_t}(\Rd;\Rm)$.
\end{proof}

\begin{rmk}{\rm
In the autonomous case, the evolution operator $\G(t,s)$ is replaced by a semigroup $\T(t)$ and the evolution system of measures $\{\mu_{i,t}: i=1,\ldots, m,\,t\in I\}$ is replaced by a family of measures not depending on the parameter $t$ denoted by $\{\mu_i:\,i=1, \ldots, m\}$. In this case the semigroup $\T(t)$ maps $L^p_{{\bm{\mu}}}(\Rd;\Rm)$ into itself and $\|\T(t)\|_{\mathcal L(L^p_{{\bm\mu}}(\Rd;\Rm))}\leq (2e^{Kt})^{\frac{p-1}{p}}$, for any $t>0$ and $p\in [1,+\infty)$. In addition, $\T(t)$ turns out to be a strongly continuous semigroup in $L^p_{{\bm{\mu}}}(\Rd;\Rm)$. Indeed, for any $\f \in C_b(\Rd;\Rm)$, $\T(t)\f$ converges locally uniformly to $\f$ as $t\to 0^+$. Hence, estimate \eqref{moto}, Proposition \ref{prop-rustico} and the dominated convergence theorem allow us to conclude that $\|\T(t)\f-\f\|_{L^p_{{\bm{\mu}}}(\Rd;\Rm)}$ vanishes as $t\to 0^+$. For $\f\in L^p_{{\bm{\mu}}}(\Rd;\Rm)$ we can get the same result using the density of $C_b(\Rd;\Rm)$ in $L^p_{{\bm{\mu}}}(\Rd;\Rm)$ and the boundedness of the function $t\mapsto\|\T(t)\|_{\mathcal L(L^p_{{\bm\mu}}(\Rd;\Rm))}$ in $(0,1)$.}
\end{rmk}

Now, we give a sufficient condition in order that the evolution operator $\G(t,s)$ is compact from $L^p_{\bm\mu_s}(\Rd;\Rm)$ into $L^p_{\bm\mu_t}(\Rd;\Rm)$.

\begin{thm}
Assume that $\G(t_0,s)$ is compact in $C_b(\Rd;\Rm)$ for some $I \ni s<t_0$. Then, $\G(t_0,s))$ is compact from $L^p_{\bm\mu_s}(\Rd;\Rm)$ into $L^p_{\bm\mu_t}(\Rd;\Rm)$ for any $p>1$.
\end{thm}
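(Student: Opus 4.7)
I would prove the $L^p$-compactness in two stages: first upgrade the hypothesized $C_b$-compactness of $\G(t_0,s)$ to compactness from $L^\infty_{\bm\mu_s}(\Rd;\Rm)$ into $L^\infty_{\bm\mu_{t_0}}(\Rd;\Rm)$ via the strong Feller property, and then interpolate with the $L^1$-boundedness established in the preceding proposition to reach the full range $p \in (1,+\infty)$ via a Cwikel-type interpolation theorem for compact operators.

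\textbf{Step 1 (from $C_b$ to $L^\infty$).} Take a bounded sequence $(\f_n)$ in $L^\infty_{\bm\mu_s}(\Rd;\Rm)$, say $\|\f_n\|_\infty \le 1$. For each $n$, I would mollify $\f_n$ to obtain continuous approximants $\tilde\f_{n,k} \in C_b(\Rd;\Rm)$ with $\|\tilde\f_{n,k}\|_\infty \le 1$ and $\tilde\f_{n,k}(x) \to \f_n(x)$ Lebesgue-almost everywhere as $k \to +\infty$. Since each measure $p_{ij}(t_0,s,x,\cdot)$ is absolutely continuous with respect to the Lebesgue measure by Proposition \ref{pro}, dominated convergence inside the kernel representation \eqref{int_rep_1} yields $(\G(t_0,s)\tilde\f_{n,k})(x)\to(\G(t_0,s)\f_n)(x)$ pointwise on $\Rd$ for each fixed $n$. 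The double-indexed family $(\tilde\f_{n,k})$ being bounded in $C_b$, the assumed $C_b$-compactness of $\G(t_0,s)$ forces $\{\G(t_0,s)\tilde\f_{n,k}\}_{n,k}$ to be relatively compact in $C_b(\Rd;\Rm)$; a standard sub-subsequence argument (``a pointwise limit inside a norm-precompact set is necessarily the uniform limit'') then promotes the above pointwise convergence to uniform convergence $\G(t_0,s)\tilde\f_{n,k}\to \G(t_0,s)\f_n$ in $C_b$ as $k\to+\infty$, for each fixed $n$. Hence $\{\G(t_0,s)\f_n:n\in\N\}$ lies in the $C_b$-closure of a precompact set, so it is relatively compact in $C_b(\Rd;\Rm)$ and a fortiori in $L^\infty_{\bm\mu_{t_0}}(\Rd;\Rm)$.

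\textbf{Step 2 (interpolation).} The preceding proposition applied with $p=1$ shows that $\G(t_0,s)$ is bounded from $L^1_{\bm\mu_s}(\Rd;\Rm)$ into $L^1_{\bm\mu_{t_0}}(\Rd;\Rm)$. Combining this boundedness with the compactness $L^\infty_{\bm\mu_s}\to L^\infty_{\bm\mu_{t_0}}$ obtained in Step 1, the Cwikel interpolation theorem for compact operators (applied to the Banach couples $(L^\infty_{\bm\mu_s},L^1_{\bm\mu_s})$ and $(L^\infty_{\bm\mu_{t_0}},L^1_{\bm\mu_{t_0}})$, whose real interpolation scales are precisely the spaces $L^p_{\bm\mu_s}$ and $L^p_{\bm\mu_{t_0}}$ for $1<p<+\infty$) yields the desired compactness of $\G(t_0,s):L^p_{\bm\mu_s}(\Rd;\Rm)\to L^p_{\bm\mu_{t_0}}(\Rd;\Rm)$ for every $p\in(1,+\infty)$. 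The main obstacle of the proof is Step 1: since continuous bounded functions are not dense in $L^\infty$ in the sup norm, the extension of compactness from $C_b$ to $L^\infty$ cannot be achieved by a uniform approximation, and one must combine the pointwise approximation at the data level (allowed by the absolute continuity of the kernels) with the $C_b$-precompactness of the images of bounded $C_b$-sets in order to push pointwise convergence to uniform convergence at the image level.
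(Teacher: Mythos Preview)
Your proof is correct and reaches the same conclusion as the paper, but the route differs in both steps.

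For the passage from $C_b$ to $L^\infty$, the paper simply invokes the strong Feller property together with the evolution law: $\G(t_0,s)$ maps $L^\infty$ into $C_b$, and then compactness is asserted directly. Your mollification argument, using that the kernels $p_{ij}(t_0,s,x,\cdot)$ are absolutely continuous with respect to Lebesgue measure and the fact that a pointwise limit inside a $C_b$-precompact set is automatically a uniform limit, is more explicit and self-contained; it does not require factoring through an intermediate time and makes no hidden appeal to compactness of any auxiliary operator.

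For the interpolation step, the approaches genuinely diverge. You quote Cwikel's theorem on interpolation of compactness as a black box, applied to the couples $(L^\infty_{\bm\mu_s},L^1_{\bm\mu_s})$ and $(L^\infty_{\bm\mu_{t_0}},L^1_{\bm\mu_{t_0}})$. The paper instead constructs, by hand, finite-rank approximants: from the $L^\infty$-compactness one extracts an $\varepsilon$-net of simple functions, builds the associated conditional-expectation projector $P_\varepsilon^{t_0}$, and shows that $\|P_\varepsilon^{t_0}\G(t_0,s)-\G(t_0,s)\|$ is $O(\varepsilon)$ in $\mathcal L(L^\infty)$ and $O(1)$ in $\mathcal L(L^1)$, whence Riesz--Thorin gives an $O(\varepsilon^{1-1/p})$ bound in $\mathcal L(L^p)$ and $\G(t_0,s)$ is a norm-limit of finite-rank operators. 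Your approach is shorter and more conceptual; the paper's is more elementary, avoids invoking a nontrivial abstract interpolation result, and yields an explicit rate of finite-rank approximation in $L^p$.
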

\begin{proof}
Let us fix  $t_0>s\in I$ and assume that $\G(t_0,s)$ is compact in $C_b(\Rd;\Rm)$. First of all, we show that $\G(t_0,s)$ is compact in $L^\infty(\Rd;\Rm)= L^\infty_{\bm\mu_s}(\Rd;\Rm)$ for any $s\in I$, where the equality follows from Proposition \ref{prop-rustico}. Since the evolution operator is strong Feller, $\G(t_0,s)$ maps $L^\infty(\Rd;\Rm)$ into $C_b(\Rd;\Rm)$. Moreover, by the semigroup law and the compactness in $C_b(\Rd;\Rm)$, $\G(t_0,s)$ turns out to be compact from $L^\infty(\Rd;\Rm)$ into $C_b(\Rd;\Rm)$ hence from $L^\infty(\Rd;\Rm)$ into itself.

Now, let $U$ be the unit ball in $L^\infty(\Rd;\Rm)$, set $K^{t_0,s}:=\G(t_0,s)(U)$ and fix $\varepsilon>0$. Thanks to the compactness of $\G(t_0,s)$ we can determine simple vector-valued functions $\{\bm \zeta_j\}_{j=1,\ldots, k}$, with $\bm\zeta_j=\sum_{i=1}^{n}\bm{c}^j_i\chi_{A_i}$ for some $\bm{c}^j_i\in \R^m$, $n\in\N$, where $\cup_{i=1}^{n}A_i=\Rd$, such that the family $\{\bm \zeta_1,\ldots, \bm \zeta_k\}$ is an $\varepsilon$-net for $K^{t_0,s}$, i.e., $K^{t_0,s}\subset \bigcup_{i=1}^k B_\varepsilon(\bm \zeta_i)$. Moreover, $P_\varepsilon^t \bm\zeta_i=\bm\zeta_i$ for any $i=1, \ldots, k$ and $t\in I$, where
\begin{equation*}
(P_\varepsilon^t \f)_{\ell}= \sum_{i=1}^n\left( \frac{1}{\mu_{{\ell},t}(A_i)}\int_{A_i}f_{\ell} d\mu_{\ell,t}\right)\chi_{A_i},\qquad\;\, \ell=1, \ldots, m,\,\,t\in I.
\end{equation*}
Note that
\begin{equation}\label{1}
\|P_\varepsilon^{t_0} \G(t_0,s)-\G(t_0,s)\|_{\mathcal{L}(L^\infty(\Rd;\Rm))}\le 2\varepsilon.
\end{equation}
Indeed, fix $\f\in L^\infty(\Rd;\Rm)$ with $\|\f\|_\infty\le 1$. Then, there exists $j\in \{1, \ldots, k\}$ such that $\G(t_0,s)\f\in B_\varepsilon(\bm \zeta_j)$. Hence,
\begin{equation*}
\|P_\varepsilon^{t_0} \G(t_0,s)\f-\G(t_0,s)\f\|_\infty \le \|P_\varepsilon^{t_0} (\G(t_0,s)\f-\bm\zeta_j)\|_\infty+ \|\bm\zeta_j-\G(t_0,s)\f\|_\infty\le 2\varepsilon.
\end{equation*}
On the other hand, since $P_\varepsilon^{t_0}$ is a contraction in $B_b(\Rd;\Rm)$, it follows that
\begin{align}
&\|P_\varepsilon^{t_0} \G(t_0,s)-\G(t_0,s)\|_{\mathcal{L}(L^1_{\bm{\mu_s}}(\Rd;\Rm);L^1_{\bm{\mu_t}}(\Rd;\Rm))}\notag\\
\le &2\|\G(t_0,s)\|_{\mathcal{L}(L^1_{\bm{\mu_s}}(\Rd;\Rm);L^1_{\bm{\mu_t}}(\Rd;\Rm))}\le 2.
\label{2}
\end{align}
Thus, estimates \eqref{1}, \eqref{2} and the Riesz-Thorin interpolation theorem yield that
\begin{equation}\label{3}
\|P_\varepsilon^{t_0} \G(t_0,s)-\G(t_0,s)\|_{\mathcal{L}(L^p_{\bm{\mu_s}}(\Rd;\Rm);L^p_{\bm{\mu_t}}(\Rd;\Rm))}\le 2\varepsilon^{1-1/p},
\end{equation}
for any $1<p<+\infty$. Letting $\varepsilon\to 0$ in estimate \eqref{3} yields the claim since $\G(t_0,s)$ can be approximated by the operator $P_\varepsilon^{t_0} \G(t_0,s)$ which has range finite.
\end{proof}

\section{Examples}
In this section we provide some examples of operators which satisfy our assumptions and to which our results can be applied.

\begin{example}{\rm
Let $\A$ be as in \eqref{op_A} with
\begin{eqnarray*}
q_{ij}^k(t,x):=\omega_{ij}^k(t)(1+|x|^2)^{h_{ij}^k},\qquad\;\, b^k_i(t,x):= -\gamma_i^k(t)x_i(1+|x|^2)^{\ell_i^k}
\end{eqnarray*}
and
\begin{eqnarray*}
c_{hk}(t,x):=d_{hk}(t)(1+|x|^2)^{\sigma_{hk}}
\end{eqnarray*}
for any $i,j=1, \ldots, d$ and $h, k=1, \ldots, m$.
Let us assume that
\begin{hyp}\label{h_0}
\begin{enumerate}[\rm(i)]
\item
for any $i,j=1, \ldots, d$ and $h, k=1, \ldots, m$, the functions $\omega_{ij}^k, \gamma_i^k$ and $d_{hk}$ belong to $C^{\alpha/2}_{\rm loc}(I)$, $\omega_{ij}^k=\omega_{ji}^k$, $h_{ij}^k=h_{ji}^k$, the coefficients $h_{ij}^k,\ell_i^k, \sigma_{hk}$ are nonnegative and $\inf_I\gamma_{i}^k>0$;
\item
the functions $d_{ij}$ are positive for $i\neq j$, negative for $i=j$ and $\sigma_{ij}<\sigma_{ii}$ for any $i\neq j$;
\item for any $k=1, \ldots, m$, $\min_{i=1,\ldots,d} h_{ii}^k\ge \max_{j\neq i}h_{ij}^k$ and 
$$\nu_k:=\inf_I\bigg(\min_{i=1,\ldots,d} \omega_{ii}^k(t)- \max_{i=1,\ldots,d}\big(\sum_{j\neq i}(\omega_{ij}^k(t))^2\big)^{\frac{1}{2}}\bigg)>0;$$
\item
$1+\max_{i=1,\ldots,d} \{\sigma_{kk}, \ell_i^k\}> \max_{i=1,\ldots,d}h_{ii}^k$, for any $k=1, \ldots, m$.
\end{enumerate}
\end{hyp}
Under Hypotheses \ref{h_0}, all the assumptions in Theorem \ref{bbthe} are satisfied hence it can be applied.
To check Hypothesis \ref{hyp-base}(ii) we can write
\begin{align*}
\langle Q^k(t,x)\zeta,\zeta\rangle=& \sum_{i=1}^d \omega_{ii}^k(t)(1+|x|^2)^{h_{ii}^k}\zeta_i^2+\sum_{j\neq i}\omega_{ij}^k(t)(1+|x|^2)^{h_{ij}^k}\zeta_i\zeta_j\\
\ge & \bigg(\min_i(\omega_{ii}^k)(1+|x|^2)^{\min_{i} h_{ii}^k}\!-\!\max_{i} \bigg(\sum_{j\neq i}(\omega_{ij}^k)^2\bigg)^{\frac{1}{2}}(1+|x|^2)^{\max_{i\neq j}h_{ij}^k}\bigg)|\zeta|^2\\
\ge & (1+|x|^2)^{\max_{i\neq j}h_{ij}^k}\bigg(\min_{i=1,\ldots,d} \omega_{ii}^k(t)- \max_{i=1,\ldots,d}\big(\sum_{j\neq i}(\omega_{ij}^k(t))^2\big)^{\frac{1}{2}}\bigg)|\zeta|^2\\
=: &\,\mu^k(t,x)|\zeta|^2
\end{align*}
for every $\zeta\in\Rd$, and Hypothesis \ref{h_0}(iii) guarantees that the infimum of $\mu^k$ in $I\times \Rd$ is positive for any $k=1, \ldots, m$.
Clearly Hypotheses \ref{hyp-base}(iii) and (iv) are immediate consequences of Hypothesis \ref{h_0}(ii).
Choosing ${\bm\varphi(x)}=\varphi(x){\bm \one}:=(1+|x|^2){\bm \one}$, for every $x \in \Rd$, we get
\begin{align*}
(\A(t)\varphi\bm\one)_k(x)&= 2\sum_{i=1}^d\omega_{ii}^k(t)(1+|x|^2)^{h_{ii}^k}-2\sum_{i=1}^d\gamma_i^k(t)x_i^2(1+|x|^2)^{\ell_i^k}\\
&+\sum_{j=1}^md_{kj}(t)(1+|x|^2)^{\sigma_{kj}+1}
\end{align*}
for every $x\in\Rd$
and from Hypothesis \ref{h_0}(iv) we can prove that there exists two positive constant $a_k,c_k$ such that $(\A(t)\varphi{\bm \one})_k\le a_k-c_k\varphi$, thus Hypothesis \ref{Lya3} (hence Hypothesis \ref{hyp-base}(iv)) is satisfied too.
In addition, since for any $h\neq k$ the functions $c_{hk}$ are nonnegative, the evolution operator $\G(t,s)$ associated to $\A(t)$ is well-defined in $\mathcal{L}(C_b(\Rd;\Rm))$ and it is positive as stated in Proposition \ref{pro}.

Now, we are interested in finding conditions on the coefficients of $\A(t)$ which ensures compactness of $\G(t,s)$ in $C_b(\Rd;\Rm)$ as obtained in Theorems \ref{thm_comp1} and \ref{thm_comp_2}. To this aim, besides Hypotheses \ref{h_0}(i)-(iii) we assume that $\max_{i=1,\ldots,d} h_{ii}^k <1+\max_{i=1,\ldots,d}\ell_i^k$ for any $k=1, \ldots, m$ and that $\sum_{i=1}^m d_{ki}(t)\le 0$ for any $k=1, \ldots, m$. In this case Hypothesis \ref{Lya2}(i) is satisfied with $\psi_k=\varphi$ for any $k=1, \ldots,m$. In addition, being $\sum_{i=1}^m c_{ki}(t,x)\le (1+|x|^2)^{\sigma_{kk}}\sum_{i=1}^m d_{ki}(t)\le 0$, Theorem \ref{thm_comp1} can be applied.\\
On the other hand,  if we assume that $\tau_k:=\max_{i=1,\ldots,d}\{\sigma_{kk},\ell_i^k\}>0$ then Hypothesis \ref{hyp_comp_2}(i) is satisfied with $\varphi(x)=1+|x|^2$ and $h_k(x)=c_1^kx^{1+\tau_k}-c_2^k$ for some positive constants $c_i^k$ $(i=1,2)$. Now we claim that if
\begin{equation}\label{varsaw}
\max_{i=1,\ldots,d} \ell_i^k>1+\max_{i,j=1,\ldots,d}\{\sigma_{kk}, h_{ij}^k-2\}, \qquad\;\, k=1, \ldots, m,
\end{equation}
then the functions $w_k(x)= 1+\frac{1}{1+|x|^2}$, ($k=1,\ldots, m$) are such that Hypothesis \ref{hyp_comp_2}(ii) is satisfied for any $\mu\in \R$, hence Theorem \ref{thm_comp_2} can be applied.
Indeed we can write
\begin{align}\label{tram}
(\mathcal{A}_k(t)w_k)(x)+c_{kk}(t,x)w_k(x)=&-2 \sum_{i=1}^d \omega_{ii}^k(t)(1+|x|^2)^{h_{ii}^k-2}\notag\\
& +2 \sum_{i=1}^d \gamma_i^k(t)x_i^2(1+|x|^2)^{\ell_i^k-2}\notag\\
&+8\sum_{j\neq i}\omega_{ij}^k(t)x_ix_j(1+|x|^2)^{h_{ij}^k-3}\notag\\
&+ d_{kk}(t)(1+|x|^2)^{\sigma_{kk}}\bigg(1+\frac{1}{1+|x|^2}\bigg).
\end{align}
Now, if \eqref{varsaw} is satisfied, the leading part in the right-hand side of \eqref{tram} is given by the term containing the drift coefficients which, as it is easily seen, blows up at infinity. Thus it is clear that we can find $R>0$ such that $\mathcal{A}_kw_k+c_{kk}w_k-\mu w_k$ is positive in $I\times (\Rd\setminus B_R)$ for any $\mu \in \R$.
Consequently, the assumption \eqref{varsaw} is also a sufficient condition in order that neither $C_0(\Rd;\Rm)$, nor $L^p(\Rd;\Rm)$ ($1\le p<+\infty$) are  preserved by the action of $\G(t,s)$ (see Theorems \ref{thm_c0} and \ref{thm_lp}(i)).

Now, we are interested in finding conditions in order that the space $C_0(\Rd;\Rm)$ is preserved by $\G(t,s)$. To this aim, we prove that assuming
\begin{equation}\label{metro}
\max_{i=1,\ldots,d}\{h_{ii}^k-1,\sigma_{kk}\}>\max_{i=1,\ldots,d}\{\max_{j=1,\ldots,d} h_{ij}^k-1,\ell_i^k, \max_{j\neq k}\sigma_{kj}\},
\end{equation}
for any $k=1,\ldots, m$, then we can find $\lambda_0>0$ and $[a,b]\subset I$ such that the function $\vv(x)=\frac{1}{1+|x|^2}\bm\one$ satisfies $\lambda_0 \vv-\A(t) \vv\ge 0$ in $[a,b]\times \Rd$. Indeed, a straightforward computation shows that
\begin{align*}
\lambda_0 v_k(x)\!-\!((\A(t) \vv)(x))_k &= \lambda_0\frac{1}{1+|x|^2}+2\sum_{i=1}^d\omega_{ii}^k(t)(1+|x|^2)^{h_{ii}^k-2}\\
&-8 \sum_{i,j=1}^d \omega_{ij}^k(t)x_ix_j(1+|x|^2)^{h_{ii}^k-3}\!-\!2\sum_{i=1}^d \gamma_i^k x_i^2(1+|x|^2)^{\ell_{i}^k-2}\\
& -\sum_{j=1}^m d_{kj}(t)(1+|x|^2)^{\sigma_{kj}-1}
\end{align*}
for any $k=1, \ldots, m$ and $(t,x)\in I\times \Rd$. Now, arguing as before, if \eqref{metro} is satisfied the function $\lambda_0 v_k(x)-((\A(t) \vv)(x))_k$ tends to $+\infty$ as $|x|\to +\infty$ uniformly with respect to $t\in [a,b]$, for any $[a,b]\subset I$. Hence we can find $\lambda_0>0$ such that $\lambda_0 \vv-\A(t) \vv\ge 0$ in $[a,b]\times \Rd$.

In order to deduce the invariance of $L^p(\Rd;\Rm)$, let us compute $\kappa_C$ which is a function which bounds from above the quadratic form associated to $C$ in $[a,b]\times \Rd$. We can write
\begin{align*}
\langle C(t,x)\zeta,\zeta\rangle =& \langle {\rm diag}C(t,x) \zeta,\zeta\rangle+\langle (C(t,x)-{\rm diag}C(t,x))\zeta,\zeta\rangle\\
\le & -\min_{i=1,\ldots,m} |c_{ii}(t,x)||\zeta|^2+ \Lambda_D(t)(1+|x|^2)^{\max_{i\neq j}\sigma_{ij}}\\
\le & -\min_{i=1,\ldots,m} |d_{ii}(t)|(1+|x|^2)^{\min_{i=1,\ldots,m} \sigma_{ii}}+ \Lambda_D(t)(1+|x|^2)^{\max_{i\neq j}\sigma_{ij}}
\end{align*}
where $\Lambda_D(t)$ is any positive function which bounds from above the quadratic form associated to the matrix $((1-\delta_{hk})d_{hk}(t))_{h,k}$.
Hence, we deduce that $\langle C(t,x)\zeta,\zeta\rangle\le \kappa_C(t,x)|\zeta|^2$ for any $(t,x)\in [a,b]\times \Rd$ where
\begin{eqnarray*}
\kappa_C(t, x)= -(\min_{i=1,\ldots,m} |d_{ii}(t))|(1+|x|^2)^{\min_{i=1,\ldots,m} \sigma_{ii}}+ \Lambda_D(t)(1+|x|^2)^{\max_{i\neq j}\sigma_{ij}},
\end{eqnarray*}
for any $t\in [a,b]$ and $x\in \Rd$. Moreover, since
\begin{align*}
{\rm div}\bm\gamma^k(t,x)=-\sum_{i=1}^d\bigg(&\gamma_i^k(t)(1+|x|^2)^{\ell_i^k}+ 2\ell_i^k\gamma_i^k(t) x_i^2(1+|x|^2)^{\ell_i^k-1}\\
&\;\,+2h_{ii}^k\omega_{ii}^k(t)(1+|x|^2)^{h_{ii}^k-1}\\
&\;\,+4 \sum_{j=1}^d h_{ij}^k(h_{ij}^k-1)\omega_{ij}^k(t)x_jx_i(1+|x|^2)^{h_{ij}^k-2}\bigg),
\end{align*}
we deduce that $\Gamma_{[a,b]}$ is finite (see \eqref{gammaab}) if, for example, $\sigma_{ii}>\max_{s,j,k}\{\ell_s^k,h_{sj}^k-1\}$ for any $i=1,\ldots, m$.
In this case also estimate \eqref{p2} is satisfied, hence Theorem \ref{thm_lp}(ii) and (iii) can be applied. Consequently the space $L^p(\Rd;\Rm)$, $p\ge 1$ turns out to be invariant under $\G(t,s)$.

It is quite easy to see that the functions $\mu_k$, $r_k$, $\rho_0$ and $\rho_1$ defined in Hypotheses \ref{gra_est} are such that
\begin{eqnarray*}
\mu_k(t,x)\simeq |x|^{2\min_i h_{ii}},\quad r_k(t,x)\simeq|x|^{2\min_i \ell_{i}^k}, \quad c_{kk}(t,x)\simeq |x|^{2\sigma_{kk}}
\end{eqnarray*}
and
\begin{eqnarray*}
\rho_0(t,x)\simeq |x|^{2\max_{h\neq k}\sigma_{hk}},\quad \rho_1(t,x)\simeq |x|^{2\max_{h,k} \sigma_{hk}-1}
\end{eqnarray*}
as $|x|\to +\infty$ for any $t\in J$, $J\subset I$ bounded. Thus, taking account of the sign of each term in the definition of $\sigma_{k,J}$ in \eqref{defi} we conclude that $\sigma_{k,J}$ is bounded in $J\times \Rd$ if, for instance
\begin{equation}\label{gr_ipo}
\max\Big\{\min_{i=1,\ldots,d} \ell_i^k, \sigma_{kk}\Big\}> \max\Big\{2\max_{i\neq k}\sigma_{ki},2\sigma_{kk}-1, \min_{i=1,\ldots,d} h_{ii}^k\Big\}, \qquad\;\, k=1, \ldots, m.
\end{equation}
Assumption \eqref{gr_ipo} allows to apply Theorem \ref{stima1} to conclude that $C^1_b(\Rd;\Rm)$ is invariant under $\G(t,s)$.

To conclude, we provide some conditions in order that the results in Section \ref{sect-inv} can be applied.
Besides Hypotheses \ref{h_0}(i)-(iii) we assume that $\max_{i=1,\ldots,d} h_{ii}^k <1+\max_{i=1,\ldots,d}\ell_i^k$ for any $k=1, \ldots, m$ and that $\sum_{i=1}^m d_{ki}(t)\le 0$ for any $k=1, \ldots, m$. In this case Hypotheses \ref{hyp-base}, \ref{Lya2} and $\ref{Lya3}$ are satisfied. If, in addition there exists $j\in \{1, \ldots, m\}$ such that
\begin{eqnarray*}
\max_{i=1,\ldots,d}\ell_i^j>\max_{i\neq k}\{\sigma_{jj},h_{ik}^j-1\},
\end{eqnarray*}
then we can find $K>0$ such that the function $g:\Rd\to\R$, defined by $g(x)=\frac{1}{1+|x|^2}-K$ for any $x\in\Rd$, is such that all the hypotheses in Lemma \ref{pos}(i) are satisfied and Theorem \ref{thm_esm} can be applied.

On the other hand, under Hypotheses \ref{h_0}(i), (iii), if $\sigma_{ij}=\sigma$ for any $i, j=1, \ldots, m$, $d_{ij}>0$ for any $i\neq j$, $\sum_{j=1}^m d_{ij}(t)=0$ for any $t\in I$, $i=1, \ldots, m$, and $\max_{i=1,\ldots,d} h_{ii}^k <1+\max_{i=1,\ldots,d}\ell_i^k$ for any $k=1, \ldots, m$ then Hypotheses \ref{hyp-base}, \ref{Lya2} and $\ref{Lya3}$ are satisfied as well as that in Lemma \ref{pos}(ii) are satisfied. Indeed in this case the function $\g=\bm \one$ is such that $\A(t)\bm \g\equiv \bm{0}$ in $\Rd$ for any $t\in I$ and consequently Theorem \ref{thm_esm} holds true also in this latter case.

 }
\end{example}

\section{Appendix}
Here, we recall some apriori estimates used in the paper, whose proofs can be obtained arguing exactly as in \cite{AALT}, and a classical approximation result.

\begin{prop}
\label{prop-A1}
Let $\Omega\subset\Rd$ be an open set, $T>s\in I$ and $\uu\in C_b([s,T]\times\overline\Omega;\R^m)\cap C^{1,2}((s,T)\times\Omega;\R^m)$
satisfy the equation $D_t\uu=\bm{\mathcal A}\uu+\g$ in $(s,T)\times\Omega$ for some $\g\in C^{\alpha/2,\alpha}((s,T)\times\Omega;\R^m)$. Further, assume that the function $t\mapsto (t-s)\|\uu(t,\cdot)\|_{C^2_b(\Omega;\R^m)}$ is bounded in $(s,T)$. Then, for any $R_1>0$ and $x_0\in\Omega$, such that $D_{R_1}(x_0)\Subset\Omega$, there exists a positive constant $K_0=K_0(R_1,\lambda_0,s,T)$ such that, for any $t\in (s,T)$,
\begin{align}
&(t-s)\|D^2_x\uu(t,\cdot)\|_{L^{\infty}(D_{R_1}(x_0);\R^m)}+\sqrt{t-s}\,\|J_x\uu(t,\cdot)\|_{L^{\infty}(D_{R_1}(x_0);\R^m)}\notag\\
\leq & K_0(\|\uu\|_{C_b([s,T]\times\overline\Omega;\R^m)}
+\|\g\|_{C^{\alpha/2,\alpha}((s,T)\times\Omega;\R^m)}).
\label{i_e}
\end{align}
\end{prop}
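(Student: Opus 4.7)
\medskip

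\noindent\emph{Proof plan.} The plan is to adapt the argument used for the analogous estimate in \cite{AALT}, exploiting the fact that the weakly-coupled structure of $\A$ reduces the statement to a scalar Schauder-type bound for each component. Writing the $k$-th equation in the form
\[
D_tu_k-\mathcal{A}_k(t)u_k=\tilde g_k,\qquad \tilde g_k:=g_k+\sum_{j=1}^m c_{kj}u_j,
\]
the inter-component coupling enters only through the zero-order source $\tilde g_k$, whose local $C^{\alpha/2,\alpha}$-norm will be controlled in terms of $\|\g\|_{C^{\alpha/2,\alpha}}$, $\|\uu\|_\infty$, and a Hölder seminorm of $\uu$ to be reabsorbed later.

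The core step is a standard localization and parabolic scaling. Fix $t_\ast\in(s,T)$ and $x_\ast\in\overline{D_{R_1}(x_0)}$ and set
\[
\rho:=\tfrac{1}{2}\min\bigl\{\sqrt{t_\ast-s},\,\operatorname{dist}(\overline{D_{R_1}(x_0)},\partial\Omega)\bigr\},
\]
so that $Q_\rho^-:=(t_\ast-\rho^2,t_\ast]\times D_\rho(x_\ast)\subset(s,T)\times\Omega$. The rescaled function $\hat u_k(\tau,y):=u_k(t_\ast+\rho^2\tau,\,x_\ast+\rho y)$ on the unit cylinder $Q_1:=(-1,0]\times D_1$ satisfies a scalar parabolic equation whose diffusion coefficients are still uniformly $\lambda_0$-elliptic and whose full set of coefficients has $C^{\alpha/2,\alpha}(Q_1)$-norms bounded, uniformly in $\rho\le 1$, by the corresponding norms of the original $q^k_{ij}$, $b^k_j$, $c_{hk}$ on a fixed compact subset of $I\times\Omega$ (whose existence is ensured by Hypothesis~\ref{hyp-base}(i)). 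Applying the classical scalar interior Schauder estimates on $(-1/2,0]\times D_{1/2}$, evaluating at $(\tau,y)=(0,0)$, and undoing the rescaling via $D_xu_k=\rho^{-1}D_y\hat u_k$, $D_x^2u_k=\rho^{-2}D_y^2\hat u_k$, convert the unweighted Schauder bound into the weighted form \eqref{i_e}, the choice $\rho\asymp\sqrt{t_\ast-s}$ producing exactly the powers $(t-s)$ and $\sqrt{t-s}$ that appear in the statement. Taking the maximum over $k$ concludes the derivation.

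The only delicate point, which I expect to be the main obstacle, is the control of $\|\tilde g_k\|_{C^{\alpha/2,\alpha}(Q_\rho^-)}$: since $\tilde g_k$ contains the components of $\uu$ itself, a naive bound would make the right-hand side depend on $\uu$ through terms not allowed in \eqref{i_e}. The standard remedy is a reabsorption argument based on the parabolic interpolation inequality
\[
[\uu]_{C^{\alpha/2,\alpha}(Q)}\le\varepsilon\,\|\uu\|_{C^{1+\alpha/2,2+\alpha}(Q)}+C_\varepsilon\,\|\uu\|_{L^\infty(Q)},
\]
applied on the cylinder $Q_\rho^-$ with $\varepsilon$ small enough so that the $[\uu]_{C^{\alpha/2,\alpha}(Q_\rho^-)}$-contribution produced (through Hypothesis~\ref{hyp-base}(i)) by the $c_{kj}u_j$ terms in $\tilde g_k$ can be moved to the left-hand side of the post-scaled Schauder inequality. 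It is precisely here that the a priori hypothesis that $t\mapsto(t-s)\|\uu(t,\cdot)\|_{C^2_b(\Omega;\R^m)}$ is bounded is needed: since $Q_\rho^-$ is bounded away from $\{t=s\}$ by $\rho^2$, the hypothesis guarantees that $\|\uu\|_{C^{1+\alpha/2,2+\alpha}(Q_\rho^-)}$ is finite, legitimizing the reabsorption. A careful bookkeeping of the $\rho$-dependences, identical to the one carried out in \cite{AALT}, then shows that the resulting constant $K_0$ depends only on $R_1$, $\lambda_0$, $s$, $T$, and on fixed local data of the coefficients, as claimed.
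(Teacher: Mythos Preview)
The paper does not give its own proof of this proposition: in the Appendix it merely states the estimate and remarks that ``the proofs can be obtained arguing exactly as in \cite{AALT}''. Your plan---decoupling the system by absorbing the potential term into the source, localizing via parabolic rescaling on a cylinder of radius $\rho\asymp\sqrt{t-s}$, applying scalar interior Schauder estimates, and reabsorbing the $C^{\alpha/2,\alpha}$-contribution of the coupling by interpolation (using the a~priori boundedness of $(t-s)\|\uu(t,\cdot)\|_{C^2_b}$)---is precisely the standard argument carried out in \cite{AALT}, so your approach coincides with what the paper intends.
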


\begin{thm}[Interior estimates]\label{int:ext}
Let $T>s\in I$ and let $\uu\in C^{1+\alpha/2,2+\alpha}((s,T]\times \R^d;\R^m)$ satisfy, in $(s,T]\times\R^d$ the equation $D_t\uu=\A\uu+\mathbf{g}$
for some $\mathbf{g}$ belonging to $C^{\alpha/2,\alpha}_{\text{loc}}((s,T]\times\R^d;\R^m)$. Then for every $r_1, r_2\in (s,T)$, with $r_1<r_2$, and any pair of bounded sets $\Omega_1$ and $\Omega_2$ such that $\Omega_1\Subset\Omega_2$, there exists a positive constant $c$, depending on $\Omega_1$, $\Omega_2$, $r_1$, $r_2$, $T$ and $s$, such that
\begin{equation}\label{Schauder}
\norm{\uu}_{C^{1+\alpha/2,2+\alpha}((r_2,T)\times\Omega_1;\R^m)}\leq c(\norm{\uu}_{C_b((r_1,T)\times\Omega_2;\R^m)}+\norm{\mathbf{g}}_{C^{\alpha/2,\alpha}((r_1,T)\times\Omega_2;\R^m)}).
\end{equation}
\end{thm}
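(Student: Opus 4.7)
The plan is to decouple the weakly coupled system by moving the zeroth-order term $C\uu$ to the forcing side and then apply the classical scalar parabolic interior Schauder estimate to each component of $\uu$. For each $k=1,\ldots,m$, the component $u_k$ solves the scalar equation
\begin{equation*}
D_t u_k-\mathcal{A}_k(t)u_k=g_k+(C\uu)_k\qquad\text{in } (s,T]\times\Rd,
\end{equation*}
and by Hypothesis \ref{hyp-base}(i) all coefficients involved (those of $\mathcal{A}_k$ and the entries of $C$) lie in $C^{\alpha/2,\alpha}$ on the closure of every bounded subset of $I\times\Rd$.

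First I would fix nested open sets and times $\Omega_1\Subset\Omega'_1\Subset\Omega'_2\Subset\Omega_2$ and $r_1<r_1'<r_2'<r_2$. On the intermediate cylinder I would invoke the classical scalar parabolic interior Schauder estimate for $D_t-\mathcal{A}_k$ (see e.g. Ladyzhenskaya--Solonnikov--Ural'tseva) to obtain
\begin{equation*}
\|u_k\|_{C^{1+\alpha/2,2+\alpha}((r_2',T)\times\Omega'_1)}
\le K\bigl(\|u_k\|_{C_b((r_1',T)\times\Omega'_2)}+\|g_k\|_{C^{\alpha/2,\alpha}}+\|(C\uu)_k\|_{C^{\alpha/2,\alpha}}\bigr),
\end{equation*}
the last two norms being taken on the intermediate cylinder as well. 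Since each $c_{hk}$ is $C^{\alpha/2,\alpha}$ on compacta, one has $\|(C\uu)_k\|_{C^{\alpha/2,\alpha}}\le K'\|\uu\|_{C^{\alpha/2,\alpha}}$, and parabolic interpolation gives, for every $\varepsilon>0$,
\begin{equation*}
\|\uu\|_{C^{\alpha/2,\alpha}}\le \varepsilon\|\uu\|_{C^{1+\alpha/2,2+\alpha}}+K_\varepsilon\|\uu\|_{C_b}.
\end{equation*}
Summing over $k=1,\ldots,m$ and chaining the Schauder estimate through finitely many nested cylinders would allow me to absorb the term $\varepsilon\|\uu\|_{C^{1+\alpha/2,2+\alpha}}$ into the left-hand side, at the price of a controlled shrinking of the spatial/temporal domain, thus yielding the claimed bound \eqref{Schauder}.

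The main obstacle is the bookkeeping of constants and intermediate cylinders required by the absorption step: the nested sets must be arranged so that the final constant $c$ depends only on $\Omega_1,\Omega_2,r_1,r_2,T,s$ and on the local H\"older norms of the coefficients on $\overline{\Omega_2}$, and not on $\uu$ or $\mathbf g$. Apart from this routine iteration, which is exactly the one underlying scalar parabolic Schauder theory and explains the author's remark that one argues ``exactly as in [AALT]'', no further ingredient is needed beyond the weakly coupled structure of $\A(t)$.
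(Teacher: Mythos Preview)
Your proposal is sound. Note first that the paper itself does not supply a proof of this theorem: it is stated in the Appendix with the remark that ``proofs can be obtained arguing exactly as in \cite{AALT}''. So there is no detailed argument in the paper to compare against line by line.

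That said, your strategy---move the zeroth-order coupling $C\uu$ to the right-hand side, apply the scalar interior Schauder estimate to each component, and absorb the resulting $C^{\alpha/2,\alpha}$-norm of $\uu$ via parabolic interpolation and a nested-cylinder iteration---is a correct and standard route, and is in fact particularly natural here: because the diffusion matrices $Q^k$ differ from equation to equation while the coupling is only at order zero, the system genuinely decouples into $m$ scalar problems once $C\uu$ is treated as a forcing term. This is arguably cleaner in the present setting than in \cite{AALT}, where the common principal part allows one to work with the system directly.

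The one point that deserves care is exactly the one you flag: in the inequality
\[
\|\uu\|_{C^{\alpha/2,\alpha}(Q')}\le \varepsilon\,\|\uu\|_{C^{1+\alpha/2,2+\alpha}(Q')}+K_\varepsilon\,\|\uu\|_{C_b(Q')},
\]
the higher-order norm on the right lives on the \emph{larger} cylinder $Q'$, not on the smaller one where the Schauder estimate controls it. The standard cure is either (a) an iteration over a chain of intermediate cylinders together with a geometric-series argument, or (b) the use of weighted interior norms (with weights given by powers of the parabolic distance to the boundary of the cylinder), for which interpolation and the Schauder estimate hold on the same domain and absorption is immediate. Either device is routine and yields a constant depending only on the data listed in the statement; your sketch is therefore complete modulo this well-known bookkeeping.
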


\begin{lemm}
\label{lemma-app}
The characteristic function of any open subset of $\Rd$ is the pointwise limit in $\Rd$ of a sequence $(\vartheta_n)\subset C_b(\Rd)$ such that
$0\le\vartheta_n\le 1$ in $\Rd$ for any $n\in\N$.
\end{lemm}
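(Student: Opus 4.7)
The plan is to construct the approximating sequence explicitly via the distance to the complement. Let $A\subset\Rd$ be an open set. The trivial cases $A=\varnothing$ and $A=\Rd$ are handled by the constant sequences $\vartheta_n\equiv 0$ and $\vartheta_n\equiv 1$, respectively, so I may assume $A$ and $A^c$ are both nonempty. I would then define
\begin{equation*}
\vartheta_n(x):=\min\{1,\,n\,d(x,A^c)\},\qquad x\in\Rd,\,n\in\N,
\end{equation*}
where $d(x,A^c):=\inf_{y\in A^c}|x-y|$.

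The verification then proceeds in three short steps. First, the function $x\mapsto d(x,A^c)$ is $1$-Lipschitz on $\Rd$ by the reverse triangle inequality $|d(x,A^c)-d(y,A^c)|\le |x-y|$, and since $r\mapsto\min\{1,nr\}$ is continuous on $\R$, the composition $\vartheta_n$ lies in $C(\Rd)$. Second, by construction $0\le\vartheta_n\le 1$ on $\Rd$, so $\vartheta_n\in C_b(\Rd)$. Third, for the pointwise convergence: if $x\in A^c$, then $d(x,A^c)=0$ and hence $\vartheta_n(x)=0=\chi_A(x)$ for every $n$; if $x\in A$, openness of $A$ yields $r_x:=d(x,A^c)>0$, whence $\vartheta_n(x)=1=\chi_A(x)$ as soon as $n\ge 1/r_x$. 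In either case $\vartheta_n(x)\to\chi_A(x)$.

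There is no genuine obstacle here: this is a classical fact, and the only mildly nontrivial ingredient is the Lipschitz continuity of the distance function, which is standard. The construction gives slightly more than required (all $\vartheta_n$ are Lipschitz continuous and the convergence is monotone nondecreasing), which could be invoked elsewhere if needed, but only the pointwise statement is claimed in the lemma.
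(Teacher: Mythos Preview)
Your proof is correct and follows essentially the same approach as the paper's: both compose a continuous cutoff function with the distance $d(x,A^c)$ to the complement and exploit that this distance is strictly positive on the open set $A$ and zero on $A^c$. The only cosmetic difference is that the paper uses a generic continuous $\phi_n$ with $\phi_n\equiv 0$ on $[0,(2n)^{-1}]$ and $\phi_n\equiv 1$ on $[1/n,\infty)$, while you take the explicit choice $r\mapsto\min\{1,nr\}$; your version additionally yields Lipschitz continuity and monotone convergence, but the core idea is identical.
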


\begin{proof}
We fix an open set $\Omega$ and, for any $n\in\N$, we denote by $\phi_n\in C_b([0,+\infty))$ any function such that $\phi_n(s)=1$, if $s\ge 1/n$, $\phi_n(s)=0$, if $s\in [0,(2n)^{-1}]$ and
$0\le\phi_n(s)\le 1$ otherwise. Next, we set
\begin{eqnarray*}
\vartheta_n(x)=\phi_n(d(x,\Rd\setminus\Omega)),\qquad\;\,x\in\Rd,
\end{eqnarray*}
where $d(x,\Rd\setminus\Omega)$ denotes the distance of $x$ from $\Rd\setminus\Omega$.
As it is immediately seen, each function $\vartheta_n$ vanishes on $\Rd\setminus\Omega$. On the other hand, if $x\in\Omega$, then $d(x,\Rd\setminus\Omega)>0$.
Therefore, if $n\in\N$ is such that $nd(x,\Rd\setminus\Omega)\ge 1$, then $\vartheta_n(x)=1$. As a byproduct, $\lim_{n\to +\infty}\vartheta_n(x)=1$.
Since, by the choice of the sequence $(\phi_n)$ it holds that $0\le\vartheta_n\le 1$ in $\Rd$, $(\vartheta_n)$ is the sequence we are looking for.
\end{proof}

\end{document}